\numberwithin{equation}{section}
\newif\ifdoublespace 
\newtheorem{theorem}{Theorem}[section]
\newtheorem{remark}[theorem]{Remark}
\newtheorem{cor}[theorem]{Corollary}
\newtheorem{prop}[theorem]{Proposition}
\newtheorem{definition}[theorem]{Definition}
\newtheorem{lemma}[theorem]{Lemma}
\DeclareMathOperator{\Tan}{Tan}
\DeclareMathOperator{\nor}{nor}
\DeclareMathOperator{\reg}{reg}
\DeclareMathOperator{\regn}{regn}
\DeclareMathOperator{\ap}{ap}
\newcommand{\R}[1]{\mathbb{R}^{#1}}
\newcommand{\vectornorm}[1]{\left|\left|#1\right|\right|}
\newcommand{\chara}[1]{{\bold{1}_{#1}}}
\newcommand{\mres}{\mathbin{\vrule height 1.6ex depth 0pt width 0.13ex\vrule height 0.13ex depth 0pt width 1.3ex}}
\begin{document}

% Enter full title and short title for running headers
\title{On $L_p$ Affine Surface Area and Curvature Measures}

% Author name(s)
\author{Yiming Zhao}

\address{Department of Mathematics, Polytechnic School of Engineering, New York University, 6 Metrotech Center, Brooklyn, NY 11201, United States}

% Address / e-mail address of corresponding author
\email{yiming.zhao.math@gmail.com}

\begin{abstract}{The relationship between $L_p$ affine surface area and curvature measures is investigated. As a result, a new representation of the existing notion of $L_p$ affine surface area depending only on curvature measures is derived. Direct proofs of the equivalence between this new representation and those previously known are provided. The proofs show that the new representation is, in a sense, ``polar'' to that of Lutwak's and ``dual'' to that of Sch\"{u}tt \& Werner's.}
\end{abstract}

\maketitle

\section{Introduction}
\label{sec_intro}
Since its introduction by Lutwak in \cite{lutwak1996}, $L_p$ affine surface area (defined below) has become a fundamental concept in the $L_p$ Brunn--Minkowski theory and has appeared in a growing number of works (e.g., Ludwig \cite{MR1991649,MR2652209}, Paouris \& Werner \cite{MR2880241}, Werner \& Ye \cite{MR2414321,MR2640049}, and Ye \cite{Ye05022014}). Different approaches to $L_p$ affine surface area have been discussed, e.g.,  Meyer \& Werner \cite{MR1764106}, Sch\"{u}tt \& Werner \cite{MR2074173}, and Werner \cite{MR2360611}. Characterization theorems for $L_p$ affine surface area have been given, e.g., Haberl \& Parapatits \cite{MR3194492,MR3176613}, and Ludwig \& Reitzner \cite{MR2680490}.  Its relation to PDE was explored, e.g., Lutwak \& Oliker \cite{MR1316557}. Connections between $L_p$ affine surface area and information theory were discovered in Werner \cite{MR2921171}. For the $p=1$ case, $L_p$ affine surface area is an older notion usually referred to simply as affine surface area. Results here are even more numerous. Different approaches to this notion include Leichtwei{\ss} \cite{leichtweiss19862}, Lutwak \cite{MR896173,lutwak1991}, Meyer \& Werner \cite{MR1466952}, Sch\"{u}tt \& Werner \cite{schutt1990}, and Werner \cite{MR1292847,MR1669674}.  A characterization of affine surface area was given in Ludwig \& Reitzner \cite{ludwig}. Connections between affine surface area and the affine Plateau problem were discussed, e.g., Trudinger \& Wang \cite{MR2137978}. It is also not surprising to see the appearance of affine surface area in polytopal approximation, e.g., B\'{a}r\'{a}ny \cite{MR1437299}, B\"{o}r\"{o}czky \cite{MR1770932}, and Gruber \cite{MR731110,MR1242984,MR1654596}. Perhaps, most importantly, it is the crucial ingredient in fundamental affine isoperimetric inequalities and gives rise to some affine analytic inequalities, e.g., Artstein-Avidan, Klartag, Sch\"{u}tt \& Werner \cite{MR2899992}, Caglar \& Werner \cite{MR3187648}, and Lutwak \cite{MR840399,MR1242979}.

Let $K\subset \R{n}$ be a convex body (compact convex set with non-empty interior). The curvature measures of $K$ are a list of $n$ Borel measures defined on the boundary of $K$ that can be defined via local parallel sets (see e.g., Chapter 4 in \cite{schneider1993}). In the current paper, the relationship between $L_p$ affine surface area and curvature measures will be explored. As a result, a new representation of the existing notion of $L_p$ affine surface area using only curvature measures will be derived. New proofs of the important properties of $L_p$ affine surface area, such as the upper semi-continuity and the $L_p$ affine isoperimetric inequality, will be given using the new representation. The proofs given will be ones requiring no prior knowledge of the properties already established using other definitions of $L_p$ affine surface area. It is also the aim of this paper to investigate the relationship between the new representation of $L_p$ affine surface area and the three existing ones. This will be done by providing direct proofs of equivalence between the new representation and those previously given. It will become apparent that the new form of $L_p$ affine surface area is, in a sense, ``polar'' to that of Lutwak's and ``dual'' to Sch\"{u}tt \& Werner's. In order to establish the equivalence, the Lipschitz property of a sequence of restrictions of the inverse Gauss map will be investigated, which may be of independent interest. This will be discussed in Section \ref{sec_lip}.

It is important to note that it is the attempt of the current paper to give a new representation of the usual $L_p$ affine surface area (not to define a new (different) $L_p$ affine surface area).

The notion of affine surface area traces back to affine differential geometry. In affine differential geometry, the affine surface area of a convex body $K$ with sufficiently smooth boundary (at least $C^2$) and everywhere positive Gauss curvature is given by 
\begin{equation}
\label{eq_intro1}
\Omega (K)=\int_{\partial K} H_K^{\frac{1}{n+1}}(x)d\mathcal{H}^{n-1}(x),
\end{equation}
where $H_K(x)$ is the Gauss curvature of $K$ at $x\in \partial K$ (the boundary of $K$) and $\mathcal{H}^{n-1}$ is $(n-1)$ dimensional Hausdorff measure.

When $K$ has $C^2$ boundary with positive Gauss curvature, the Gauss map $\nu_K: \partial K\rightarrow S^{n-1}$ is nice enough to allow the change of variable $u=\nu_K(x)$ and we get:
\begin{equation}
\label{eq_intro2}
\Omega(K)=\int_{S^{n-1}}F_K^{\frac{n}{n+1}}(u)d\mathcal{H}^{n-1}(u).
\end{equation}
Here $F_K:S^{n-1}\rightarrow \mathbb{R}$ is the curvature function of $K$.

A very important result in affine differential geometry is the affine isoperimetric inequality which characterizes ellipsoids. For a convex body $K\subset \R{n}$ with $C^2$ boundary and positive Gauss curvature,
\begin{equation}
\label{star}
\Omega(K)^{n+1}\leq n^{n+1}\omega_n^2 V(K)^{n-1},
\end{equation}
with equality if and only if $K$ is an ellipsoid. Here $V(K)$ is the volume of $K$ and $\omega_n$ is the volume of the $n$-dimensional unit ball.

Extending the definition of affine surface area to one that works for general convex bodies (without smoothness assumptions) and still respects the basic properties of the classical definition was of huge interest during the late 80s and 90s (in the previous century). In particular, is there a way to do the extension so that the affine isoperimetric inequality, with the same equality conditions, still holds? The first attempt was made by Petty \cite{MR809198}. He observed that \eqref{eq_intro2} makes sense for convex bodies that possess curvature functions. The affine isoperimetric inequality was also shown to  hold under this extension. 

Although the Gauss curvature and the curvature function do not necessarily exist for general convex bodies, the generalized Gauss curvature and the generalized curvature function (see \cite{schneider1993} or Section \ref{section_preli}) exist almost everywhere on $\partial K$ and $S^{n-1}$ with respect to $(n-1)$ dimensional Hausdorff measure. Thus \eqref{eq_intro1} and \eqref{eq_intro2} already suggest two possible extensions. But, the two extensions are not trivial at all, since the two integrals might not make sense. 

With the notion of floating body,  Leichtwei{\ss} was able to give a geometric meaning to \eqref{eq_intro2} when $F_K$ is the generalized curvature function. 
\begin{definition}[Affine surface area by Leichtwei{\ss} \cite{leichtweiss19862}]Let $K\subset \R{n}$ be a convex body. The affine surface area of $K$ is given by
\begin{equation}
\label{eq_new_1}
\Omega(K)=\int_{S^{n-1}}F_K^{\frac{n}{n+1}}(u)d\mathcal{H}^{n-1}(u),
\end{equation}
where $F_K$ is the generalized curvature function.
\end{definition}

One is tempted to use a strategy similar to how we arrived at \eqref{eq_intro2} to get \eqref{eq_intro1} to work for general convex bodies. This turns out to be invalid since neither the Gauss map nor the inverse Gauss map, in this case, is smooth enough (in fact, not even Lipschitz) to permit such a change of variable. In spite of this unfortunate fact, there is a natural extension to \eqref{eq_intro1}. Sch{\"u}tt \& Werner \cite{schutt1990}, via the notion of convex floating body, were able to give a geometric meaning to the integral representation \eqref{eq_intro1} with $H_K$ being the generalized Gauss curvature. 

\begin{definition}[Affine surface area by Sch{\"u}tt \& Werner \cite{schutt1990}]Let $K\subset \R{n}$ be a convex body. The affine surface area of $K$ is given by 
\begin{equation}
\label{eq_new_2}
\Omega(K)=\int_{\partial K}H_K^{\frac{1}{n+1}}(x)d\mathcal{H}^{n-1}(x),
\end{equation}
where $H_K$ is the generalized Gauss curvature. 
\end{definition}

One of the major characteristics that distinguishes affine surface area (look at either \eqref{eq_new_1} or \eqref{eq_new_2}) and other geometric invariants is that affine surface area is not continuous with respect to the Hausdorff metric. For example, any convex body can be approximated by polytopes, but polytopes are always of zero affine surface area. Given this fact, only upper semi-continuity can be expected. But, even establishing the upper semi-continuity of classical affine surface area (in the smooth case) was unsolved in the 80s. One of the difficulties of establishing this lies in the lack of knowledge of the limit behaviors of the Gauss curvature and the curvature function. Since \eqref{eq_new_1} and \eqref{eq_new_2} take similar formulations, difficulty persists. Note that the lack of continuity also adds to the difficulty of establishing the affine isoperimetric inequality, since we cannot establish the inequality for a dense class of convex bodies and then take a limit. 

The long conjectured upper semi-continuity of classical affine surface area was settled by Lutwak. In \cite{lutwak1991}, he found the following characterization of affine surface area for general convex bodies.
\begin{definition}[Affine surface area by Lutwak \cite{lutwak1991}]Let $K\subset \R{n}$ be a convex body. The affine surface area of $K$ is given by
\begin{equation}
\label{eq_new_3}
\Omega(K)=\inf_{h}\left\{\left(\int_{S^{n-1}}h^{n}(u)d\mathcal{H}^{n-1}(u)\right)^{\frac{1}{n+1}}\left(\int_{S^{n-1}}h^{-1}(u)dS_K(u)\right)^{\frac{n}{n+1}}\right\},
\end{equation}
where the infimum is taken over all positive continuous functions $h: S^{n-1}\rightarrow \mathbb{R}$, and $S_K$ is the surface area measure of $K$.
\end{definition}
Note that Lutwak's definition applies to all convex bodies (even the ones without smoothness assumptions). More importantly, the upper semi-continuity of \eqref{eq_new_3} follows directly from the weak continuity of the measures in the integrals and the fact that the infimum of a class of continuous functionals is upper semi-continuous. Since \eqref{eq_new_3} agrees with classical affine surface area for smooth convex bodies with everywhere positive curvature function (also shown in \cite{lutwak1991}), this, in turn, proves the upper semi-continuity of classical affine surface area. As shown in \cite{lutwak1991}, the affine isoperimetric inequality can also be established in this case by using the Blaschke--Santal\'{o} inequality.

A natural question to ask is: are Definitions \eqref{eq_new_1}, \eqref{eq_new_2}, and \eqref{eq_new_3} equivalent? As was explained earlier, the equivalence of \eqref{eq_new_1} and \eqref{eq_new_2} is by no means a trivial problem for general convex bodies. It was not until 1993 that Sch{\"u}tt, in \cite{schutt1993}, proved the equivalence---using a somewhat indirect method. A direct proof was given in \cite{daniel1996} by Hug. That \eqref{eq_new_1} and \eqref{eq_new_3} are equivalent was shown by Leitchwei{\ss} in \cite{leitchtweiss1989}. Note that in Lutwak's original definition, the infimum is only taken over all positive functions $h$ such that $\int_{S^{n-1}}uh(u)^{n+1}d\mathcal{H}^{n-1}(u)=o$. The removal of this restriction was proposed by Leichtwei{\ss} in \cite{leitchtweiss1989} and the equivalence between this formulation and Lutwak's original definition was shown by Dolzmann \& Hug in \cite{daniel1995} using a topological argument.

Note that it is trivial to see that affine surface area is translation invariant. Sch{\"u}tt, in \cite{schutt1993}, proved that affine surface area is a valuation. Hence, affine surface area is an upper semi-continuous valuation that is invariant under volume preserving affine transformations. In a landmark work, Ludwig \& Reitzner \cite{ludwig} established the ``converse'': if a real-valued upper semi-continuous valuation on the set of convex bodies is invariant under volume preserving affine transformations, then it must be of the form $c_0V_0+c_1V+c_2\Omega$ with $c_0,c_1,c_2\in \mathbb{R}$ and $c_2>0$. Here $V_0$ is the Euler characteristic, $V$ is volume, and $\Omega$ is affine surface area.

Observe that \eqref{eq_new_1} and \eqref{eq_new_2} are ``polar'' to each other in the sense that one is defined as an integral over the boundary of the convex body (domain of the Gauss map), while the other is defined as an integral over the unit sphere (image of the Gauss map). In fact, as the proof provided by Hug in \cite{daniel1996} indicates, Definitions \eqref{eq_new_1} and \eqref{eq_new_2} are linked by the Gauss map. Also note that \eqref{eq_new_3} is ``dual'' to \eqref{eq_new_1} as one can see in the proof in \cite{leitchtweiss1989}.

Recall that for a convex body $K\subset \R{n}$, curvature measures are a list of $n$ Borel measures defined on $\partial K$ that can be defined via local parallel sets. For details, the reader should consult Chapter 4 in \cite{schneider1993}. Among these curvature measures, the $0$-th and $(n-1)$-th curvature measures have stronger geometric meanings. More specifically, for a Borel set $\beta\subset \partial K$, the $0$-th curvature measure $C_0(K,\beta)$ and the $(n-1)$-th curvature measure $C_{n-1}(K,\beta)$ of $\beta$ are given by 
\begin{equation*}
C_{0}(K,\beta)=\mathcal{H}^{n-1}(\nu(K,\beta))\quad \text{and}\quad C_{n-1}(K,\beta)=\mathcal{H}^{n-1}(\beta),
\end{equation*}
where $\nu(K,\beta)\subset S^{n-1}$ is the set of outer unit normals of $K$ at points in $\beta$.

Note that previous formulations of affine surface area involve $(n-1)$ dimensional Hausdorff measure on $\partial K$ (Definition \eqref{eq_new_2}), $(n-1)$ dimensional Hausdorff measure on $S^{n-1}$ (Definitions \eqref{eq_new_1} and \eqref{eq_new_3}), and the surface area measure of $K$ (Definition \eqref{eq_new_3}). Since curvature measures are also a crucial type of measures associated to a convex body $K$, it is natural to study the relationship between affine surface area and the curvature measures. It is the purpose of this paper to investigate this missing element. To be precise, the following theorem will be proved:
\begin{theorem}
\label{thm_new_1}
For a convex body $K\subset \R{n}$,
\begin{equation}
\label{eq_new_4}
\int_{\partial K}H_K^{\frac{1}{n+1}}(x)d\mathcal{H}^{n-1}(x)=\inf_g\left\{\left(\int_{\partial K}g^{-n}(x)dC_0(K,x)\right)^{\frac{1}{n+1}}\left(\int_{\partial K} g(x)dC_{n-1}(K,x)\right)^{\frac{n}{n+1}}\right\},
\end{equation}
where the infimum is taken over all positive continuous functions $g:\partial K\rightarrow \mathbb{R}$.
\end{theorem}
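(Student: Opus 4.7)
The plan is to establish the two inequalities separately. For the easy direction $\Omega(K)\leq$ RHS, I would fix any positive continuous $g$ on $\partial K$ and observe that, by definition, $C_{n-1}(K,\cdot)=\mathcal{H}^{n-1}|_{\partial K}$, while $C_0(K,\cdot)$ admits a Lebesgue decomposition $dC_0=H_K\,d\mathcal{H}^{n-1}+dC_0^s$ with a nonnegative singular part $C_0^s$ concentrated on the $\mathcal{H}^{n-1}$-null set of non-regular boundary points. The factorization $H_K^{1/(n+1)}=(g^{-n}H_K)^{1/(n+1)}\cdot g^{n/(n+1)}$ together with H\"older's inequality with conjugate exponents $n+1$ and $(n+1)/n$ then yields
\[
\int_{\partial K} H_K^{\frac{1}{n+1}}\,d\mathcal{H}^{n-1} \leq \Bigl(\int_{\partial K} g^{-n}H_K\,d\mathcal{H}^{n-1}\Bigr)^{\frac{1}{n+1}} \Bigl(\int_{\partial K} g\,d\mathcal{H}^{n-1}\Bigr)^{\frac{n}{n+1}}.
\]
Replacing $H_K\,d\mathcal{H}^{n-1}$ by the larger $dC_0$ (legitimate since $g^{-n}>0$) and identifying $d\mathcal{H}^{n-1}$ with $dC_{n-1}$ bounds this by the functional evaluated at $g$; taking the infimum over $g$ completes this direction.

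For the reverse inequality RHS $\leq\Omega(K)$, I would exhibit a sequence of positive continuous $g_\epsilon$ along which the right-hand side functional converges to $\Omega(K)$. Equality in H\"older requires $g^{n+1}\propto H_K$, so the formal minimizer is $g=H_K^{1/(n+1)}$; this function, however, is only $\mathcal{H}^{n-1}$-a.e.\ defined, generally vanishes on parts of the support of $C_0^s$, and is not continuous. My approach would be a two-step approximation: first truncate and perturb, setting $f_\epsilon:=\max(H_K^{1/(n+1)},\epsilon)$ to enforce positivity, and then invoke Lusin's theorem together with Tietze's extension theorem to replace $f_\epsilon$ by a continuous function $g_\epsilon$ on $\partial K$ agreeing with it outside a set of small $\mathcal{H}^{n-1}$-measure, further modified to be large on a small open neighborhood of the (closed, $\mathcal{H}^{n-1}$-null) singular support of $C_0^s$ so that $\int g_\epsilon^{-n}\,dC_0^s$ remains negligible. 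Letting $\epsilon\to 0$, one then checks that both $\int g_\epsilon^{-n}\,dC_0$ and $\int g_\epsilon\,dC_{n-1}$ converge to $\int H_K^{1/(n+1)}\,d\mathcal{H}^{n-1}=\Omega(K)$, so that the product tends to $\Omega(K)$.

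The main obstacle is precisely this construction. The function $g_\epsilon$ must simultaneously approximate $H_K^{1/(n+1)}$ in two different senses (so that the $dC_{n-1}$-integral and the absolutely continuous part of the $dC_0$-integral have the correct limits) while being forced to be large on neighborhoods of the support of $C_0^s$ (to suppress the singular contribution to $\int g_\epsilon^{-n}\,dC_0$). These demands are in tension because $H_K^{1/(n+1)}$ typically vanishes near the non-regular boundary points where $C_0^s$ concentrates; indeed, when $K$ is a polytope both sides must vanish and any good $g_\epsilon$ has to balance these competing requirements. I expect the Lipschitz property of restrictions of the inverse Gauss map announced for Section~\ref{sec_lip} to enter here, permitting one to transfer well-behaved continuous test functions from $S^{n-1}$, where Lutwak's ``polar'' representation naturally lives, back to $\partial K$ via a partial inverse of $\nu_K$, and thereby to construct $g_\epsilon$ with both the correct almost-everywhere asymptotics and the necessary growth near the singular set.
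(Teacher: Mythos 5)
Your first inequality is fine and is essentially the paper's own argument: H\"older applied to $H_K^{1/(n+1)}=(g^{-n}H_K)^{1/(n+1)}g^{n/(n+1)}$, followed by the decomposition \eqref{decom} of $C_0(K,\cdot)$ and the identification $C_{n-1}(K,\cdot)=\mathcal{H}^{n-1}\mres\partial K$. The gap is in the reverse direction, and it is concentrated in your approximation scheme. First, invoking Lusin/Tietze with the disagreement set small only in $\mathcal{H}^{n-1}$-measure does not control $\int g_\epsilon^{-n}\,dC_0$: the measure $C_0(K,\cdot)$ is in general not absolutely continuous with respect to $\mathcal{H}^{n-1}$ (for a polytope all of its mass sits on the vertex set, which is $\mathcal{H}^{n-1}$-null), so a set of tiny $\mathcal{H}^{n-1}$-measure may carry essentially all of the singular mass, and on that set $g_\epsilon$ may be of size $\epsilon$, contributing roughly $\epsilon^{-n}C_0(K,\hat{\partial}K)$, which blows up rather than vanishes. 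The approximation must be performed with respect to a measure dominating \emph{both} $C_0(K,\cdot)$ and $C_{n-1}(K,\cdot)$ (the paper uses $\eta=C_{n-1}(K,\cdot)+C_0(K,\cdot)$ in the $p=1$ case), and the function being approximated must first be truncated to lie between two fixed positive constants (the paper uses $[1/i,i]$) so that the error on the disagreement set can be estimated; your $f_\epsilon=\max(H_K^{1/(n+1)},\epsilon)$ is not bounded above, which further obstructs this estimate.

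Second, your patch of making $g_\epsilon$ large ``on a small open neighborhood of the (closed, $\mathcal{H}^{n-1}$-null) singular support of $C_0^s$'' rests on a false premise: only the \emph{concentration set} $\hat{\partial}K$ in \eqref{decom} is known to be an $\mathcal{H}^{n-1}$-null Borel set; its closure, i.e.\ the topological support of the singular part, need not be null. For instance, a planar convex body whose corners form a dense subset of the boundary (with exterior angles summing to at most $2\pi$) has $C_0^s$ supported on all of $\partial K$, so every open neighborhood of the support has full $\mathcal{H}^{n-1}$-measure, and forcing $g_\epsilon$ to be large there inflates $\int g_\epsilon\,dC_{n-1}$ and destroys the convergence of the product. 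The paper avoids neighborhoods entirely: it defines the approximant to equal the large constant $i$ \emph{exactly on the Borel set} $\hat{\partial}K$ (so the singular contribution is $i^{-n}C_0(K,\hat{\partial}K)\rightarrow 0$), sets it to the small value $1/i$ off $H^+$ (where $H_K$ vanishes $\mathcal{H}^{n-1}$-a.e.), takes the value $H_K^{1/(n+1)}$ on $H^+\setminus\hat{\partial}K$, truncates to $[1/i,i]$, and only then regularizes by Lusin with respect to $\eta$. Finally, the Lipschitz property of restrictions of $\tau_K$ from Section \ref{sec_lip} plays no role in this theorem; in the paper it is needed only for the direct comparison with Lutwak's spherical representation \eqref{eq_new_6} in Section \ref{sec_equivalences}.
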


In light of Theorem \ref{thm_new_1}, we may view the right side of \eqref{eq_new_4} as a new representation of the existing notion of affine surface area. 
\begin{definition}Let $K\subset \R{n}$ be a convex body. The affine surface area of $K$ can be defined by
\begin{equation}
\label{eq_new_5}
\Omega(K)=\inf_{g}\left\{\left(\int_{\partial K}g^{-n}(x)dC_0(K,x)\right)^{\frac{1}{n+1}}\left(\int_{\partial K} g(x)dC_{n-1}(K,x)\right)^{\frac{n}{n+1}}\right\},
\end{equation}
where the infimum is taken over all positive continuous functions $g:\partial K\rightarrow \mathbb{R}$.
\end{definition}
Note again that this form of affine surface area uses only curvature measures.

A list of properties of \eqref{eq_new_5} will be given in Section \ref{sec_properties}. In particular, among other things, the upper semi-continuity (Theorem \ref{prop_new_1}) and the affine isoperimetric inequality with equality condition (Theorem \ref{prop_new_2}) will be demonstrated using the new representation \eqref{eq_new_5}. The author would like to point out that although previously established, none of the results proved in Section \ref{sec_properties} require acknowledgement of properties proved under the existing forms.

With the recent development of the $L_p$ Brunn-Minkowski theory, efforts were also made to generalize affine surface area to its $L_p$ analogue. One of the key findings in the $L_p$ Brunn-Minkowski theory is the $L_p$ curvature function discovered by Lutwak \cite{MR1231704,lutwak1996}. Given a convex body $K$ that possesses a curvature function $F_K$, the $L_p$ curvature function may be defined by $h_K^{1-p}F_K$ with $h_K$ being the support function of $K$. Since the generalized curvature function exists almost everywhere for an arbitrary convex body, the generalized $L_p$ curvature function also exists almost everywhere. Given this notion, $L_p$ affine surface area can be defined.

The notion of $L_p$ affine surface area was introduced by Lutwak in \cite{lutwak1996}. He defined the $L_p$ affine surface area of a convex body $K\subset \R{n}$ that contains the origin in its interior to be,
\begin{equation}
\label{eq_new_6}
\Omega_p(K)=\inf_{h}\left\{\left(\int_{S^{n-1}}h^{n}(u)d\mathcal{H}^{n-1}(u)\right)^{\frac{p}{n+p}}\left(\int_{S^{n-1}}h^{-p}(u)h_K^{1-p}(u)dS_K(u)\right)^{\frac{n}{n+p}}\right\},
\end{equation}
where the infimum is taken over all positive continuous functions $h: S^{n-1}\rightarrow \mathbb{R}$. Although Lutwak originally presented this definition for the case $p\geq 1$, it works perfectly fine for any $0<p<1$ as observed by Hug in \cite{daniel1996}.

As with the classical $p=1$ case, different forms of $L_p$ affine surface area exist.  For each $p>0$, the following form of  $L_p$ affine surface area was given by Lutwak \cite{lutwak1996} for convex bodies that possess a continuous curvature function and by Hug \cite{daniel1996} for general convex bodies, 
\begin{equation}
\label{eq_new_7}
\Omega_p(K)=\int_{S^{n-1}}\left(\frac{F_K(u)}{h_K^{p-1}(u)}\right)^{\frac{n}{n+p}}d\mathcal{H}^{n-1}(u).
\end{equation} 
Analogously, for each $p>0$, another form of $L_p$ affine surface area, was given by Hug in \cite{daniel1996},
\begin{equation}
\label{eq_new_8}
\Omega_p(K)=\int_{\partial K}\left(\frac{H_K(x)}{(h_K(\nu_K(x)))^{(p-1)n/p}}\right)^{\frac{p}{n+p}}d\mathcal{H}^{n-1}(x).
\end{equation}

Note that \eqref{eq_new_7} is the $L_p$ extension of \eqref{eq_new_1} while \eqref{eq_new_8} is the $L_p$ extension of \eqref{eq_new_2}.

The equivalence of \eqref{eq_new_7} and \eqref{eq_new_8} was proved by Hug in \cite{daniel1996}. That \eqref{eq_new_6} and \eqref{eq_new_7} are equivalent for convex bodies that possess a positive continuous curvature function was due to Lutwak \cite{lutwak1996}, and can in general be proved in a similar way as Leitchwei{\ss} did in \cite{leitchtweiss1989} for $p=1$ as pointed out in \cite{daniel1996}.

It is also possible to prove the analogue of Theorem \ref{thm_new_1} in the $L_p$ setting. Namely,
\begin{theorem}
\label{thm_new_2}
Let $p>0$ be a real number. For each convex body $K\subset \R{n}$ that contains the origin in its interior, 
\begin{equation*}
\begin{aligned}
&\int_{\partial K}\left(\frac{H_K(x)}{(h_K(\nu_K(x)))^{(p-1)n/p}}\right)^{\frac{p}{n+p}}d\mathcal{H}^{n-1}(x)\\
=&\inf_{g}\left\{\left(\int_{\partial K}g^{-n}(x)dC_0(K,x)\right)^{\frac{p}{n+p}}\left(\int_{\partial K} g^p(x)(h_K(\nu_K(x)))^{1-p}dC_{n-1}(K,x)\right)^{\frac{n}{n+p}}\right\},
\end{aligned}
\end{equation*}
where the infimum is taken over all positive continuous functions $g:\partial K\rightarrow \mathbb{R}$.
\end{theorem}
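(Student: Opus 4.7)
The plan is to mirror the strategy that would prove Theorem \ref{thm_new_1}, with the $L_p$-weight $h_K(\nu_K(\cdot))^{1-p}$ inserted throughout. I would handle the two inequalities separately: a Hölder upper bound giving that the left-hand side is dominated by the infimum, followed by an approximation argument that realizes the infimum by continuous functions.

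For the easy direction, fix a positive continuous $g:\partial K\to\mathbb{R}$ and exploit the algebraic identity
$$H_K(x)^{p/(n+p)}\,h_K(\nu_K(x))^{-(p-1)n/(n+p)} = \bigl(H_K(x)\,g(x)^{-n}\bigr)^{p/(n+p)}\bigl(g(x)^p\,h_K(\nu_K(x))^{1-p}\bigr)^{n/(n+p)}.$$
Hölder's inequality on $(\partial K,\mathcal{H}^{n-1})$ with conjugate exponents $(n+p)/p$ and $(n+p)/n$ then gives the left-hand side bounded by
$$\Bigl(\int_{\partial K}H_K\,g^{-n}\,d\mathcal{H}^{n-1}\Bigr)^{p/(n+p)}\Bigl(\int_{\partial K}g^p\,h_K(\nu_K)^{1-p}\,d\mathcal{H}^{n-1}\Bigr)^{n/(n+p)}.$$
Since $C_{n-1}(K,\cdot)=\mathcal{H}^{n-1}|_{\partial K}$, the second factor already matches the corresponding factor on the right-hand side of the theorem. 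For the first factor I invoke the fact (Schneider, Ch.~4, via the area formula for the Gauss map) that the Radon--Nikodym derivative of the absolutely continuous part of $C_0(K,\cdot)$ with respect to $\mathcal{H}^{n-1}|_{\partial K}$ equals the generalized Gauss curvature $H_K$, so $H_K\,d\mathcal{H}^{n-1}\le dC_0(K,\cdot)$ as measures. Taking the infimum over $g$ completes this direction.

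For the reverse inequality, the pointwise equality case of Hölder identifies the optimal weight $g_*(x)=(H_K(x)\,h_K(\nu_K(x))^{p-1})^{1/(n+p)}$, defined $\mathcal{H}^{n-1}$-a.e.\ on $\partial K$. A formal substitution of $g_*$ would collapse the right-hand side to the left-hand side, but $g_*$ is only Borel measurable, so I would build an approximating sequence $g_k$ of positive continuous functions by first truncating $g_*$ to $\max(1/k,\min(k,g_*))$, then extracting a continuous representative on a large compact subset via Lusin's and Tietze's theorems, mollifying, and finally multiplying by a continuous bump that drives $g_k$ to $+\infty$ on a shrinking neighborhood of the singular support of $C_0(K,\cdot)$. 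Dominated convergence applied to each of the two integrals separately, using the fact that $h_K$ is bounded above and below away from zero under the hypothesis that the origin lies in the interior of $K$, then yields that the right-hand side evaluated at $g_k$ converges to the left-hand side. The case in which the left-hand side is $+\infty$ is dispatched by a parallel truncation on $H_K$ itself, showing that the infimum is then also infinite.

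The main obstacle is handling the singular part of the curvature measure $C_0(K,\cdot)$. At boundary points where $\partial K$ is not of class $C^2$---for instance at the vertices of a polytope---$C_0$ carries atoms while $H_K$ vanishes $\mathcal{H}^{n-1}$-almost everywhere, so the Hölder bound is strict unless the approximating sequence $g_k$ is engineered to suppress the contribution of the singular support to $\int g_k^{-n}\,dC_0$. Balancing the required blow-up of $g_k$ on this singular set against the simultaneous requirement that $\int g_k^{p}\,h_K(\nu_K)^{1-p}\,dC_{n-1}$ not acquire a compensating defect is the delicate point; by contrast, the $L_p$-weight $h_K(\nu_K)^{1-p}$ itself introduces nothing conceptually new, as it is uniformly bounded above and below and can be folded into the $p=1$ estimates of Theorem~\ref{thm_new_1} essentially verbatim.
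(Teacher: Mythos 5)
Your first inequality is exactly the paper's: H\"older on $(\partial K,\mathcal{H}^{n-1})$ with the weight $g^{-n}H_K$ versus $g^p(h_K\circ\nu_K)^{1-p}$, followed by the decomposition $dC_0(K,\cdot)=H_K\,d\mathcal{H}^{n-1}+$ (singular part on an $\mathcal{H}^{n-1}$-null Borel set $\hat{\partial}K$), which gives $H_K\,d\mathcal{H}^{n-1}\le dC_0(K,\cdot)$. That direction is fine. The optimal weight $g_*=(H_K\,(h_K\circ\nu_K)^{p-1})^{1/(n+p)}$ and the truncation to $[1/k,k]$ also match the paper's construction (the truncation automatically puts $g_k=1/k$ where $H_K=0$, which is what kills the second integral off $H^+$).

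The gap is in how you propose to suppress the singular part of $C_0(K,\cdot)$: ``multiplying by a continuous bump that drives $g_k$ to $+\infty$ on a shrinking neighborhood of the singular support of $C_0(K,\cdot)$.'' The singular part lives on an $\mathcal{H}^{n-1}$-null set, but its topological support can be all of $\partial K$ (take the convex hull of a dense sequence of points on a sphere: every vertex carries an atom of $C_0$, and the vertices are dense). Then every open neighborhood of the singular support is all of $\partial K$, and a continuous bump tending to $+\infty$ there forces $\int_{\partial K}g_k^p(h_K\circ\nu_K)^{1-p}\,dC_{n-1}\to\infty$, destroying the approximation. Relatedly, applying Lusin ``on a large compact subset'' is only meaningful relative to a measure; if that measure is $\mathcal{H}^{n-1}$, the exceptional set may still carry most of $C_0$, and you lose control of $\int g_k^{-n}\,dC_0$. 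The paper's fix avoids any blow-up: it sets the target function equal to the \emph{constant} $i$ on the Borel set $\hat{\partial}K$, so the singular contribution to the first integral is $\le i^{-n}C_0(K,\hat{\partial}K)\to 0$ while the second integral is unaffected because $\mathcal{H}^{n-1}(\hat{\partial}K)=0$; after fixing a suitable $i_0$ it applies Lusin with respect to the \emph{combined} finite measure $d\eta=(h_K\circ\nu_K)^{1-p}dC_{n-1}(K,\cdot)+dC_0(K,\cdot)$ and re-truncates to $[1/i_0,i_0]$, so the disagreement set is small simultaneously for both integrals and each is perturbed by at most $1/j$. Replacing your bump-and-mollify step with this two-stage limit (first $i\to\infty$ in the Borel construction, then Lusin relative to $\eta$) closes the argument; the rest of your proposal, including the observation that the weight $h_K(\nu_K(\cdot))^{1-p}$ is bounded above and below and changes nothing structurally, is correct.
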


\section{Preliminaries}
\label{section_preli}
We will be working mainly in $\R{n}$ with the canonical inner product $\langle\cdot,\cdot\rangle$. The usual Euclidean $2$-norm will be denoted by $\vectornorm{\cdot}$ and the open (resp. closed) ball of radius $r$, which is centered at $x$, will be denoted by $B(x,r)$ (resp. $B[x,r]$). We write $\omega_n$ for the volume of the $n$-dimensional unit ball. For a subset $A\subset \R{n}$, we will write $\bar{A}$ and $A^c$ for the closure of $A$ and the complement of $A$, respectively. The characteristic function of $E$, for any set $E$, is written as $\chara{E}$. 

A subset $K$ of $\R{n}$ is called a \emph{convex body} if it is a compact convex set with non-empty interior. The set of all convex bodies that contain the origin in the interior is denoted by $\mathcal{K}_0^n$. The boundary of $K$ will be written as $\partial K$. For an integer $m\leq n$, we will write $\mathcal{H}^{m}$ for $m$ dimensional Hausdorff measure. If $\eta$ is a measure on a topological space $X$ and $A\subset X$ is $\eta$ measurable, the restriction of $\eta$ to $A$ will be denoted by $\eta\mres A$. 

Associated to each convex body $K$ is the support function $h_K:\R{n}\rightarrow \mathbb{R}$ given by
\begin{equation*}
h_K(x)=\max\{\langle x,y\rangle:y\in K\},
\end{equation*}
for each $x\in \R{n}$.

The supporting hyperplane $P(K,u)$ of $K$ for each $u\in S^{n-1}$ is given by
\begin{equation*}
P(K,u)=\{x\in \R{n}:\langle x,u\rangle =h_K(u)\}.
\end{equation*}

At each boundary point $x\in \partial K$, a unit vector $u$ is said to be an \emph{outer unit normal} of $K$ at $x$ if $P(K,u)$ passes through $x$. For a subset $\beta\subset \partial K$, the spherical image, $\nu(K,\beta)$, of $K$ at $\beta$, is the set of all outer unit normal vectors of $K$ at points in $\beta$. A boundary point $x$ is \emph{regular} if $\nu(K,\{x\})$ contains exactly one point in $S^{n-1}$. Denote by $\reg K$ the set of all regular boundary points of $K$. The \emph{Gauss map}, $\nu_K:\reg K\rightarrow S^{n-1}$ is the map that takes each regular boundary point to the unique outer unit normal of that point. Similarly, for each subset $\omega\subset S^{n-1}$, the inverse spherical image, $\tau(K,\omega)$, of $K$ at $\omega$, is the set of all boundary points of $K$ that have outer normal vectors in $\omega$. A unit vector $u$ is \emph{regular} if $\tau(K,\{u\})$ contains exactly one point in $\partial K$. Denote by $\regn K$ the set of all regular normal vectors. The \emph{inverse Gauss map}, $\tau_K:\regn K\rightarrow\partial K$ is the map that takes each regular normal vector to the unique point in $\tau(K,\{u\})$. Both the Gauss map and the inverse Gauss map are continuous (see Lemma 2.2.12 in \cite{schneider1993}).

Let $f:\R{n}\rightarrow \mathbb{R}$ be a convex function. The set
\begin{equation*}
\partial f(x)=\{v\in \R{n}:f(y)\geq f(x)+\langle v,y-x\rangle,\,\,\forall y\in \R{n}\}
\end{equation*}  
is called the \emph{subdifferential} of $f$ at $x$. If $\vartheta:\R{n}\rightarrow\R{n}$ satisfies $\vartheta(x)\in\partial f(x)$ for each $x\in \R{n}$, then it is called a \emph{subgradient choice} of $f$. Moreover, $f$ is differentiable at $x$ if and only if $\partial f(x)$ contains only $\nabla f(x)$, the gradient of $f$ at $x$.

The following notion of second order differentiability is useful. We say $f$ is \emph{second order differentiable} at $x_0$ in the generalized sense if $f$ is differentiable at $x_0$ in the classical sense and there exists a symmetric linear map $Af(x_0):\R{n}\rightarrow \R{n}$ such that
\begin{equation*}
f(y)=f(x_0)+\langle\nabla f(x_0),y-x_0\rangle+\frac{1}{2}\langle Af(x_0)(y-x_0),y-x_0\rangle+o(\vectornorm{y-x_0}^2),
\end{equation*}
for every $y\in \R{n}$. It follows from \cite{alexandroff} that a convex function $f$ is \emph{second order differentiable} at $x_0$ in the generalized sense if and only if there exists a neighborhood $V$ of $x_0$ and a symmetric linear map $Af(x_0):\R{n}\rightarrow\R{n}$ such that
\begin{equation*}
\vectornorm{\vartheta(y)-\vartheta(x_0)-Af(x_0)(y-x_0)}= o(\vectornorm{y-x_0})
\end{equation*} 
for all $y\in V$ and all subgradient choices $\vartheta$. Note that the generalized second order differentiability is a local property. Hence, the above notion extends naturally to the case where $f$ is only defined on an open subset of $\R{n}$.
\ifdoublespace
\\
\indent
\else

\fi
For a regular boundary point $x_0\in \partial K$, suppose $u_0=\nu_K(x_0)$. The \emph{tangent space}, $T_{x_0}K$, of $K$ at $x_0$ is the linear subspace $P(K,u_0)-x_0$. Write $\tilde{y}$ for the orthogonal projection of $y$ to $T_{x_0}K$ for each $y\in \R{n}$. We can choose a number $\varepsilon>0$ and a neighborhood $U(x_0)$ of $x_0$ such that for each $x\in U(x_0)\cap \partial K$,
\begin{equation}
\label{eq_local 31}
x=x_0+\tilde{x}-\widetilde{x_0}-f(\tilde{x}-\widetilde{x_0})u_0, 
\end{equation}
where $\vectornorm{\tilde{x}-\widetilde{x_0}}<\varepsilon$ and $f:T_{x_0}K\cap B(o,\varepsilon)\rightarrow \mathbb{R}$ is a convex function satisfying $f\geq 0$ and $f(o)=0$. We say a regular boundary point $x_0$ is \emph{normal} if $f$ in \eqref{eq_local 31} is second order differentiable at $o$ in the generalized sense. Denote by $\nor K$ the set of all normal boundary points of $K$. With a proper choice of orthonormal basis $\mathfrak{B}=\{e_1,e_2,\ldots,e_n\}$ satisfying $e_1,\ldots,e_{n-1}\in T_{x_0}K$ and $e_n=-u_0$, it is possible to write $f$ as:
\begin{equation}
\label{eq_normal representation}
f(\tilde{x}-\widetilde{x_0})=\frac{1}{2}\kappa_1(x_0)(x^1-x_0^1)^2+\ldots+\frac{1}{2}\kappa_{n-1}(x_0)(x^{n-1}-x_0^{n-1})^2+o(\vectornorm{\widetilde{x}-\widetilde{x_0}}^2),
\end{equation}
where $(x^1,\ldots,x^n)$ are the coordinates of $x$ under $\mathfrak{B}$. Here, $\kappa_i(x_0)$ is called a generalized principal curvature while $e_i(x_0)$ is the associated generalized principal direction,  for $1\leq i\leq n-1$. In this case, the generalized Gauss curvature $H_K(x_0)$ of $K$ at $x_0$ is given by
\begin{equation*}
H_K(x_0)=\kappa_1(x_0)\kappa_2(x_0)\cdots\kappa_{n-1}(x_0).
\end{equation*}
It follows from the Alexandrov Theorem \cite{alexandroff,MR1555408} that 
\begin{equation}
\label{sharp}
\mathcal{H}^{n-1}(\partial K\setminus \nor K)=0. 
\end{equation}
Hence, $H_K(x)$ is defined for $\mathcal{H}^{n-1}$ almost all $x\in \partial K$. The set $H^+$ is given by
\begin{equation}
\label{astr}
H^+=\{x\in \partial K:x\text{ is a normal boundary point and }H_K(x)>0\}.
\end{equation}

The support function $h_K$ is differentiable at $u_0\in S^{n-1}$ if and only if $u_0$ is a regular normal vector. In this case, $\nabla h_K(u_0)=\tau_K(u_0)$. (See Corollary 1.7.3 in \cite{schneider1993}.) It was the result of the Alexandrov Theorem \cite{alexandroff,MR1555408} that every convex function $f:\R{n}\rightarrow \mathbb{R}$ is $\mathcal{H}^n$ almost everywhere second order differentiable in the generalized sense. In particular, the support function $h_K$ is $\mathcal{H}^{n}$ almost everywhere second order differentiable in the generalized sense. Denote by $D^2(h_K)$ the set of all points at which $h_K$ is second order differentiable in the generalized sense. The following properties can be easily seen from the homogeneity of $h_K$:
\begin{enumerate}
\item If $u_0\in D^2(h_K)$, then $tu_0\in D^2(h_K)$, for all $t\in \mathbb{R}\setminus\{0\}$. Hence
\begin{equation}
\label{hkdifferentiable}
\mathcal{H}^{n-1}(S^{n-1}\setminus D^2(h_K))=0.
\end{equation}
\item Given $u_0\in D^2(h_K)$, we have that $u_0$ is an eigenvector of $Ah_K(u_0)$ with 0 being the associated eigenvalue. The fact that $Ah_K(u_0)$ is symmetric tells us that $u_0^\perp$ is an invariant subspace of $Ah_K(u_0)$.
\end{enumerate}

The generalized curvature function of $K$ at $u\in D^2(h_K)\cap S^{n-1}$, denoted by $F_K(u)$, is defined to be the determinant of $Ah_K(u)|_{u^\perp}$. Note that $F_K(u)$ is defined for $\mathcal{H}^{n-1}$ almost all $u\in S^{n-1}$. The set $F^+$ is given by 
\begin{equation}
\label{105}
F^+=\{u\in D^2(h_K)\cap S^{n-1}:F_K(u)>0\}.
\end{equation} 

The surface area measure $S_K$ of a convex body $K$ is a Borel measure on $S^{n-1}$ and is given by
\begin{equation*}
S_K(\omega)=\mathcal{H}^{n-1}(\tau(K,\omega)),
\end{equation*}
for each Borel set $\omega\subset S^{n-1}$.

Recall that the $0$-th curvature measure $C_0(K,\cdot)$ and the $(n-1)$-th curvature measure $C_{n-1}(K,\cdot)$ are Borel measures on the boundary of $K$ and are given by
\begin{equation}
\label{eq_new 9}
C_0(K,\beta)=\mathcal{H}^{n-1}(\nu(K,\beta))\qquad \text{and} \qquad C_{n-1}(K,\beta)=\mathcal{H}^{n-1}(\beta),
\end{equation}
for each Borel set $\beta\subset \partial K$. It is obvious that $C_0(K,\cdot)$ and $C_{n-1}(K,\cdot)$ are finite measures. The $0$-th curvature measure $C_0(K,\cdot)$ has the following decomposition (see e.g., Hilfssatz 3.6 in \cite{MR522031} or (2.7) in \cite{MR1654685}): for each Borel set $\beta\subset \partial K$,
\begin{equation}
\label{decom}
C_0(K,\beta)=\int_{\beta}H_K(x)d\mathcal{H}^{n-1}(x)+\int_{\beta\cap \hat{\partial}K}dC_0(K,x),
\end{equation}
where $\hat{\partial}K\subset \partial K$ is a Borel set and $\mathcal{H}^{n-1}(\hat{\partial}K)=0$. In particular, one has 
\begin{equation}
\label{tmp1}
\int_{\partial K}H_K(x)d\mathcal{H}^{n-1}(x)<\infty.
\end{equation}

Curvature measures are weakly continuous with respect to the Hausdorff metric (see \cite{schneider1993}).

The following definitions are needed for Federer's coarea formula. See~\cite{federer} for details.

A subset $\omega$ of $\R{n}$ is said to be \emph{$(\mathcal{H}^{n-1},n-1)$ rectifiable} if $\mathcal{H}^{n-1}(\omega)<\infty$ and there exists $\{(f_i,E_i)\}_{i\in\mathbb{N}^+}$ such that $E_i\subset \R{n-1}$ is bounded, $f_i:E_i\rightarrow \R{n}$ is Lipschitz and $\mathcal{H}^{n-1}(\omega\setminus \cup_{i\in\mathbb{N}^+}f_i(E_i))=0$. 

Let $S$ be a non-empty subset of $\R{n}$. The \emph{tangent cone} of $S$ at a given point $a\in \R{n}$, denoted by $\Tan(S,a)$, can be defined as the set of $v\in \R{n}$ such that for every $\varepsilon>0$ there exists $x\in S$ and $r>0$ with $\vectornorm{x-a}<\varepsilon$ and $\vectornorm{r(x-a)-v}<\varepsilon$. 

Suppose $\eta$ is a measure on $\R{n}$. The \emph{$(n-1)$ dimensional density} $\Theta^{n-1}(\eta,a)$ at $a\in \R{n}$ is given by
\begin{equation*}
\Theta^{n-1}(\eta,a)=\lim_{r\rightarrow 0+}\omega_{n-1}^{-1}r^{-(n-1)}\eta(B(a,r)),
\end{equation*}
if the limit exists. The \emph{$(\eta, n-1)$ approximate tangent cone} $\Tan^{n-1}(\eta, a)$ at $a$ is given by
\begin{equation*}
\Tan^{n-1}(\eta,a)=\bigcap\{\Tan(S,a):S\subset \R{n},\Theta^{n-1}(\eta\mres (\R{n}\setminus S),a)=0\}.
\end{equation*}
Suppose $f$ maps a subset of $\R{n}$ into $\R{n}$. We say that $f$ is \emph{$(\eta,n-1)$ approximately differentiable} at $a$ if and only if there exists $\xi\in \R{n}$ and a continuous linear map $\zeta:\R{n}\rightarrow\R{n}$ such that
\begin{equation*}
\Theta^{n-1}\left(\eta\mres (\R{n}\setminus \{x:\vectornorm{f(x)-\xi-\zeta(x-a)}\leq \varepsilon\vectornorm{x-a}\}),a\right)=0,
\end{equation*}
for every $\varepsilon>0$. In this case, the \emph{$(\eta,n-1)$ approximate differential} of $f$ at $a$, denoted by $(\eta,n-1)\ap\,Df(a)$, is given by
\begin{equation*}
(\eta,n-1)\ap\,Df(a)=\zeta|_{\Tan^{n-1}(\eta,a)}.
\end{equation*}

Suppose $V, W$ are two $(n-1)$-dimensional Hilbert spaces. Let $\bigwedge^{n-1} V$ and $\bigwedge^{n-1} W$ be the $(n-1)$th exterior power of $V$ and $W$ equipped with the induced inner products from $V$ and $W$ respectively. Every linear map $f:V\rightarrow W$ induces a map $\bigwedge^{n-1} f: \bigwedge^{n-1} V\rightarrow \bigwedge^{n-1}W$. By $\vectornorm{\bigwedge^{n-1}f}$, we mean the operator norm of $\bigwedge^{n-1} f$. Note that $\vectornorm{\bigwedge^{n-1}f}$ is just the absolute value of the determinant of $f$ when $V=W$. See Chapter 1 in \cite{federer} for details. 

When $\eta$ is the restriction of $\mathcal{H}^{n-1}$ to some $\mathcal{H}^{n-1}$ measurable and $(\mathcal{H}^{n-1},n-1)$ rectifiable subset of $\R{n}$, by Theorem 3.2.19 in \cite{federer}, the approximate tangent cone $\Tan^{n-1}(\eta, a)$ is an $(n-1)$ dimensional subspace of $\R{n}$ for $\mathcal{H}^{n-1}$ almost all $a$ in that subset. In this case (when $\Tan^{n-1}(\eta,a)$ is an $(n-1)$ dimensional subspace of $\R{n}$), we call $\vectornorm{\bigwedge^{n-1}(\eta,n-1)\ap\,Df(a)}$ the \emph{$(\eta,n-1)$ approximate Jacobian} of $f$ at $a$ and denote it by $(\eta,n-1)\ap\,Jf(a)$.

The following is a special case of Federer's coarea formula~\cite[Theorem 3.2.22]{federer}. Note that the original theorem works for any non-negative measurable function by the obvious application of the monotone convergence theorem.
\begin{theorem}[Federer's coarea formula] 
\label{coarea formula}
Suppose $W,Z\subset \R{n}$ are $\mathcal{H}^{n-1}$ measurable and $(\mathcal{H}^{n-1},n-1)$ rectifiable. If $f:W\rightarrow Z$ is Lipschitz, then for each $\mathcal{H}^{n-1}\mres W$ measurable non-negative function $g$ on $W$,
\begin{equation*}
\int_W g(x)\cdot (\mathcal{H}^{n-1}\mres W,n-1)\ap\,Jf(x)d\mathcal{H}^{n-1}(x)=\int_{Z}\int_{f^{-1}(z)}g(y)d\mathcal{H}^0(y)d\mathcal{H}^{n-1}(z).
\end{equation*}
\end{theorem}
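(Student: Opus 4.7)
The plan is to prove the equality by establishing the two inequalities separately, mirroring the argument one would give for Theorem \ref{thm_new_1}; the extra weight $h_K(\nu_K(\cdot))^{1-p}$ is continuous, bounded, and bounded away from zero because $K$ contains the origin in its interior (so $h_K>0$ on $S^{n-1}$). For the direction in which the integral on the left is at most the infimum on the right, fix any continuous positive $g\colon\partial K\to\mathbb{R}$ and use the pointwise identity
\begin{equation*}
\left(\frac{H_K(x)}{h_K(\nu_K(x))^{(p-1)n/p}}\right)^{\!\frac{p}{n+p}} = \bigl(H_K(x)\,g(x)^{-n}\bigr)^{\!\frac{p}{n+p}}\cdot\bigl(g(x)^p\,h_K(\nu_K(x))^{1-p}\bigr)^{\!\frac{n}{n+p}}
\end{equation*}
(a routine verification of exponents), valid at each $x\in H^+$. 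Integrating against $\mathcal{H}^{n-1}$ over $\partial K$ and applying H\"older's inequality with conjugate exponents $(n+p)/p$ and $(n+p)/n$ yields
\begin{equation*}
\text{LHS}\leq\Bigl(\int_{\partial K}H_K g^{-n}\,d\mathcal{H}^{n-1}\Bigr)^{\!\frac{p}{n+p}}\Bigl(\int_{\partial K}g^p h_K(\nu_K)^{1-p}\,d\mathcal{H}^{n-1}\Bigr)^{\!\frac{n}{n+p}}.
\end{equation*}
The decomposition \eqref{decom} gives $\int H_K g^{-n}\,d\mathcal{H}^{n-1}\leq\int g^{-n}\,dC_0(K,\cdot)$ by dropping the non-negative singular part on $\hat\partial K$, and $C_{n-1}(K,\cdot)=\mathcal{H}^{n-1}$ rewrites the second factor as an integral against $dC_{n-1}(K,\cdot)$; taking the infimum over $g$ gives this direction.

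For the reverse direction, the equality case in H\"older above points to the measurable function $\phi(x):=(H_K(x)\,h_K(\nu_K(x))^{p-1})^{1/(n+p)}$ on $H^+$: plugging $\phi$ into both factor integrals against $\mathcal{H}^{n-1}$, a direct computation shows each factor equals the LHS integral, so the product equals LHS. Since $\phi$ is only measurable and possibly either unbounded or vanishing, we must approximate by a continuous positive function. For parameters $m,M,L>0$ and an open set $U\supset\hat\partial K$ with $\mathcal{H}^{n-1}(U)$ prescribed small, define a bounded measurable function equal to $\max(1/m,\min(\phi,M))$ on $H^+\setminus U$, equal to $1/m$ on $(\partial K\setminus H^+)\setminus U$, and equal to $L$ on $U$; apply Lusin's theorem (for the finite Borel measure $C_0(K,\cdot)+\mathcal{H}^{n-1}$ on $\partial K$) together with Tietze's extension theorem to produce a continuous positive $g$ equal to this function off a set of arbitrarily small $(C_0+\mathcal{H}^{n-1})$-measure, with $g\equiv L$ on a neighborhood of $\hat\partial K$. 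The two factor integrals split into contributions over $U$ and over $\partial K\setminus U$. The $U$-contributions are bounded respectively by $L^{-n}C_0(K,U)$ and by $L^p(\max h_K^{1-p})\,\mathcal{H}^{n-1}(U)$; sending $L\to\infty$ while also arranging $\mathcal{H}^{n-1}(U)\ll L^{-p}$ (possible since $\mathcal{H}^{n-1}(\hat\partial K)=0$) makes both vanish. Off $U$, dominated convergence applies with dominating function a constant multiple of the LHS integrand, which is integrable by \eqref{tmp1} and the boundedness of $h_K(\nu_K)^{1-p}$; sending $M\to\infty$ and then $m\to\infty$, the two factors converge to $\int_{H^+}\phi^{-n}H_K\,d\mathcal{H}^{n-1}$ and $\int_{H^+}\phi^p h_K(\nu_K)^{1-p}\,d\mathcal{H}^{n-1}$ respectively, each equal to the LHS integral by direct substitution. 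Hence the product tends to LHS and the infimum is at most LHS.

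The main obstacle is the reverse direction: one must simultaneously choreograph the limits of the truncation constants $m,M$, the plateau height $L$, the neighborhood size $\mathcal{H}^{n-1}(U)$, and the Lusin tolerance so that every auxiliary error term decays while $g_k$ remains continuous and strictly positive throughout. The delicate technical point is taming the non-trivial singular contribution $C_0(K,\hat\partial K)$: this is accomplished by raising $g$ to a large plateau value $L$ on a vanishingly small $\mathcal{H}^{n-1}$-neighborhood of $\hat\partial K$, which costs essentially nothing in the second factor precisely because $C_{n-1}=\mathcal{H}^{n-1}$ assigns no mass to $\hat\partial K$.
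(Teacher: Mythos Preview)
Your proposal does not address the stated theorem at all. The statement you were asked to prove is Federer's coarea formula (Theorem~\ref{coarea formula}), a general measure-theoretic identity relating the integral of $g\cdot\ap\,Jf$ over a rectifiable set $W$ to the fibre integrals $\int_{f^{-1}(z)}g\,d\mathcal{H}^0$ integrated over $Z$. The paper does not prove this result; it is quoted as a special case of \cite[Theorem 3.2.22]{federer}, with the remark that the extension to non-negative measurable $g$ follows by monotone convergence.

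What you have written instead is a proof sketch of Theorem~\ref{thm_new_2}: the identification of the $L_p$ affine surface area integral $\int_{\partial K}(H_K/h_K(\nu_K)^{(p-1)n/p})^{p/(n+p)}\,d\mathcal{H}^{n-1}$ with the infimum over positive continuous $g$ of the curvature-measure product. Nothing in your argument---H\"older with exponents $(n+p)/p$ and $(n+p)/n$, the decomposition \eqref{decom}, the approximation by Lusin/Tietze of the equality function $\phi=(H_K\,h_K(\nu_K)^{p-1})^{1/(n+p)}$, the plateau trick on a neighborhood of $\hat\partial K$---has any bearing on establishing the coarea identity for an arbitrary Lipschitz map between rectifiable sets. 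You have simply matched your proof to the wrong statement.
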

It is implied in Theorem \ref{coarea formula} that $\int_{f^{-1}(z)}g(y)d\mathcal{H}^0(y)$ is $\mathcal{H}^{n-1}$ measurable as a function in $z$.
\section{Curvature Measures and $L_p$ Affine Surface Area}
\label{sec_new def}
In this section, we will prove the promised Theorem \ref{thm_new_2}, which will reveal the relationship between $L_p$ affine surface area and curvature measures. Notice that Theorem \ref{thm_new_1} follows by setting $p=1$ in Theorem \ref{thm_new_2} and the obvious fact that both sides of \eqref{eq_new_4} are translation invariant.

The following notations will be needed.

Let
\begin{align*}
T_1&=\left\{g:\partial K\rightarrow \mathbb{R}\,\,\mathcal{H}^{n-1}\text{ measurable}\colon 0<g<\infty\text{ and } \int_{\partial K}g^p(x)(h_K(\nu_K(x)))^{1-p}d\mathcal{H}^{n-1}(x)<\infty\right\},\\
T_2&=\left\{g:\partial K\rightarrow \mathbb{R} \text{ continuous}:g>0\right\}. 
\end{align*}

Note that when $K\in \mathcal{K}_0^n$, the sets $T_1$ and $T_2$ have the following relationship:
\begin{equation}
\label{star1}
T_1\supset T_2.
\end{equation}

Recall that $H^+$ is the set of normal boundary points with positive Gauss curvature (see \eqref{astr}).

\begin{proof}[Proof of Theorem \ref{thm_new_2}]
For the sake of simplicity, let us introduce the following notations:
\begin{align}
\label{eq_operator1}
L_1(g)&=\left(\int_{\partial K}g^{-n}(x)H_K(x)d\mathcal{H}^{n-1}(x)\right)^{\frac{p}{n+p}}\left(\int_{\partial K}g^{p}(x)(h_K(\nu_K(x)))^{1-p}d\mathcal{H}^{n-1}(x)\right)^{\frac{n}{n+p}},\\
\label{eq_operator2}
L_2(g)&=\left(\int_{\partial K}g^{-n}(x)dC_0(K,x)\right)^{\frac{p}{n+p}}\left(\int_{\partial K}g^p(x)(h_K(\nu_K(x)))^{1-p}dC_{n-1}(K,x)\right)^{\frac{n}{n+p}}.
\end{align}
Note that $L_1(g)$ is defined for $g\in T_1$, while $L_2(g)$ is defined for any positive Borel measurable function $g:\partial K\rightarrow \R{n}$ satisfying $\int_{\partial K}g^p(x)(h_K(\nu_K(x)))^{1-p}dC_{n-1}(K,x)<\infty$.

For each $g\in T_1$, by H\"{o}lder's inequality,
\begin{align}
\int_{\partial K}\left(\frac{H_K(x)}{(h_K(\nu_K(x)))^{(p-1)\frac{n}{p}}}\right)^{\frac{p}{n+p}}d\mathcal{H}^{n-1}(x)&=\int_{\partial K}\left(\frac{H_K(x)}{(h_K(\nu_K(x)))^{(p-1)\frac{n}{p}}}\right)^{\frac{p}{n+p}}g^{-\frac{np}{n+p}}(x)g^{\frac{np}{n+p}}(x)d\mathcal{H}^{n-1}(x) \nonumber \\ 
\label{eq_local 1003}
&\leq L_1(g).
\end{align}

For each $g\in T_2$, by \eqref{decom},
\begin{equation*}
\begin{aligned}
\int_{\partial K}g^{-n}(x)dC_0(K,x)&=\int_{\hat{\partial}K}g^{-n}(x)dC_0(K,x)+\int_{\partial K}g^{-n}(x)H_K(x)d\mathcal{H}^{n-1}(x)\\
&\geq \int_{\partial K}g^{-n}(x)H_K(x)d\mathcal{H}^{n-1}(x).
\end{aligned}
\end{equation*}
This, the fact that $\mathcal{H}^{n-1}(\beta)=C_{n-1}(K,\beta)$ for each Borel set $\beta\subset \partial K$, \eqref{eq_operator1}, and \eqref{eq_operator2} imply that,
\begin{equation}
\label{eq_local 1004}
L_1(g)\leq L_2(g),
\end{equation} 
for each $g\in T_2$.

Equations \eqref{eq_local 1003}, \eqref{eq_local 1004}, and the fact that $T_1\supset T_2$ show
\begin{equation}
\label{eq_local 1012}
\int_{\partial K}\left(\frac{H_K(x)}{(h_K(\nu_K(x)))^{(p-1)\frac{n}{p}}}\right)^{\frac{p}{n+p}}d\mathcal{H}^{n-1}(x)\leq \inf_{g\in T_1}L_1(g)\leq \inf_{g\in T_2}L_1(g)\leq \inf_{g\in T_2}L_2(g).
\end{equation}

To complete the proof, let us now show
\begin{equation}
\label{eq_local 1013}
\inf_{g\in T_2}L_2(g)\leq \int_{\partial K}\left(\frac{H_K(x)}{(h_K(\nu_K(x)))^{(p-1)\frac{n}{p}}}\right)^{\frac{p}{n+p}}d\mathcal{H}^{n-1}(x).
\end{equation}
It suffices, for every $\varepsilon>0$, to find a $g\in T_2$ such that
\begin{equation}
\label{eq_local 1011}
L_2(g)\leq \int_{\partial K}\left(\frac{H_K(x)}{(h_K(\nu_K(x)))^{(p-1)\frac{n}{p}}}\right)^{\frac{p}{n+p}}d\mathcal{H}^{n-1}(x)+\varepsilon.
\end{equation}

Let $\tilde{f}_i: \partial K\rightarrow (0,\infty)$ be defined by
\begin{equation}
\tilde{f}_i(x)=
\begin{cases}
i, & \text{if }x\in \hat{\partial}K,\\
h_K(\nu_K(x))^{\frac{p-1}{n+p}}H_K^{\frac{1}{n+p}}(x), &\text{if } x\in H^+\setminus \hat{\partial}K,\\
\frac{1}{i}, &\text{if }x\notin H^+\cup \hat{\partial}K.
\end{cases}
\end{equation}
Note that for each $\mathcal{H}^{n-1}$ measurable subset $A\subset \partial K$, there exists a Borel measurable set $\bar{A}\subset \partial K$ such that $\bar{A}\supset A$ and $\mathcal{H}^{n-1}(\bar{A}\setminus A)=0$.
This and the fact that $\hat{\partial}K$ is a Borel set ensure that we can modify the value of $\tilde{f}_i$ on a subset $Z\subset \partial K\setminus \hat{\partial} K$ with $\mathcal{H}^{n-1}(Z)=0$, such that the resulting function is a Borel measurable function. Denote the resulting function by $f_i$. Clearly $f_i(x)=\tilde{f}_i(x)=i$ for any $x\in \hat{\partial}K$ and $f_i(x)=\tilde{f}_i(x)$ for $\mathcal{H}^{n-1}$ almost all $x\in \partial K$. Define 
\begin{equation}
h_i(x)=
\begin{cases}
\frac{1}{i}, &\text{if }f_i(x)<\frac{1}{i},\\
f_i, &\text{if } \frac{1}{i}\leq f_i(x)\leq i,\\
i, &\text{if } i<f_i(x).
\end{cases}
\end{equation}
Note that both $f_i$ and $h_i$ are Borel measurable, and $\frac{1}{i}\leq h_i\leq i$.

By the fact that both $C_0(K,\cdot)$ and $C_{n-1}(K,\cdot)$ are finite measures, the assumption that $K$ contains the origin in its interior, \eqref{decom}, the fact that $\mathcal{H}^{n-1}(\beta)=C_{n-1}(K,\beta)$ for each Borel set $\beta\subset \partial K$, the choice of $f_i$, and $\mathcal{H}^{n-1}(\hat{\partial}K)=0$, we can compute the following limit,
\begin{align}
&\lim_{i\rightarrow \infty}\left(\int_{\partial K}(f_i^{-n}(x)+\frac{1}{i^n})dC_0(K,x)\right)^{\frac{p}{n+p}}\left(\int_{\partial K}(f_i^p(x)+\frac{1}{i^p})(h_K(\nu_K(x)))^{1-p}dC_{n-1}(K,x)\right)^{\frac{n}{n+p}}\nonumber \\
=&\lim_{i\rightarrow \infty}\left(\int_{\partial K}f_i^{-n}(x)dC_0(K,x)\right)^{\frac{p}{n+p}}\left(\int_{\partial K}f_i^p(x)(h_K(\nu_K(x)))^{1-p}dC_{n-1}(K,x)\right)^{\frac{n}{n+p}}\nonumber \\
=&\lim_{i\rightarrow \infty}\left(\int_{\partial K}f_i^{-n}(x)H_K(x)d\mathcal{H}^{n-1}(x)+\frac{1}{i^n}\int_{\hat{\partial}K}dC_0(K,x)\right)^{\frac{p}{n+p}}\nonumber\\
&\phantom{wewqewqewqewqewsdsdasdweqwewqewqew}\cdot\left(\int_{\partial K}f_i^p(x)(h_K(\nu_K(x)))^{1-p}dC_{n-1}(K,x)\right)^{\frac{n}{n+p}}\nonumber\\
=&\lim_{i\rightarrow \infty}\left(\int_{\partial K}f_i^{-n}(x)H_K(x)d\mathcal{H}^{n-1}(x)\right)^{\frac{p}{n+p}}\left(\int_{\partial K}f_i^p(x)(h_K(\nu_K(x)))^{1-p}d\mathcal{H}^{n-1}(x)\right)^{\frac{n}{n+p}}\nonumber\\
=&\lim_{i\rightarrow \infty}\left(\int_{H^+}f_i^{-n}(x)H_K(x)d\mathcal{H}^{n-1}(x)\right)^{\frac{p}{n+p}}\nonumber\\
&\phantom{weqwewqeq}\cdot\left(\int_{H^+}f_i^p(x)(h_K(\nu_K(x)))^{1-p}d\mathcal{H}^{n-1}(x)+\frac{1}{i^p}\int_{\partial K\setminus H^+}h_K(\nu_K(x))^{1-p}d\mathcal{H}^{n-1}(x)\right)^{\frac{n}{n+p}}\nonumber\\
=&\lim_{i\rightarrow \infty}\left(\int_{H^+}f_i^{-n}(x)H_K(x)d\mathcal{H}^{n-1}(x)\right)^{\frac{p}{n+p}}\left(\int_{H^+}f_i^p(x)(h_K(\nu_K(x)))^{1-p}d\mathcal{H}^{n-1}(x)\right)^{\frac{n}{n+p}}\nonumber\\
\label{eq_local 1008}
=& \int_{\partial K}\left(\frac{H_K(x)}{(h_K(\nu_K(x)))^{(p-1)\frac{n}{p}}}\right)^{\frac{p}{n+p}}d\mathcal{H}^{n-1}(x).
\end{align}

By \eqref{eq_operator2}, and the fact that $h_i^{-n}\leq f_i^{-n}+\frac{1}{i^n}$, $h_i^p\leq f_i^p+\frac{1}{i^p}$, 
\begin{equation}
\label{eq_local 1007}
\begin{aligned}
&\limsup_{i\rightarrow \infty}L_2(h_i)\\
\leq &\lim_{i\rightarrow \infty}\left(\int_{\partial K}(f_i^{-n}(x)+\frac{1}{i^n})dC_0(K,x)\right)^{\frac{p}{n+p}}\left(\int_{\partial K}(f_i^p(x)+\frac{1}{i^p})(h_K(\nu_K(x)))^{1-p}dC_{n-1}(K,x)\right)^{\frac{n}{n+p}}.
\end{aligned}
\end{equation}

Equations \eqref{eq_local 1008} and \eqref{eq_local 1007} imply that there exists $i_0$ such that 
\begin{equation}
\label{eq_local 1005}
L_2(h_{i_0})\leq \int_{\partial K}\left(\frac{H_K(x)}{(h_K(\nu_K(x)))^{(p-1)\frac{n}{p}}}\right)^{\frac{p}{n+p}}d\mathcal{H}^{n-1}(x)+\varepsilon/2.
\end{equation}

Note that $\frac{1}{i_0}\leq h_{i_0}\leq i_0$. Let $d\eta(x)=(h_K(\nu_K(x)))^{1-p}dC_{n-1}(K,x)+dC_0(K,x)$. Note that $\eta$ is a finite positive Borel measure on a compact metric space. Hence $\eta$ is regular. Since $\eta(\partial K)<\infty$, by Lusin's Theorem (see Theorem 2.24 in \cite{bigrudin}), there exists $\tilde{g}_j\in C(\partial K)$ such that 
\begin{equation}
\label{eq_local 10032}
\eta(\{x\in \partial K:\tilde{g}_j(x)\neq h_{i_0}(x)\})<\min\{\frac{1}{2i_0^n}, \frac{1}{2i_0^p}\}\frac{1}{j}.
\end{equation}

Let
\begin{equation*}
g_j(x)=
\begin{cases}
\frac{1}{i_0}, &\text{if } \tilde{g}_j(x)<\frac{1}{i_0},\\
\tilde{g}_j(x), &\text{if } \frac{1}{i_0}\leq \tilde{g}_j(x)\leq i_0,\\
i_0, &\text{if }i_0<\tilde{g}_j.
\end{cases}
\end{equation*} 
It is easy to see that $g_j$ is still continuous and $\frac{1}{i_0}\leq g_j\leq i_0$. Moreover, since whenever $g_j(x)\neq h_{i_0}(x)$, it must be the case that $\tilde{g}_j(x)\neq h_{i_0}(x)$, we have by \eqref{eq_local 10032},
\begin{equation*}
\eta(\{x\in \partial K:g_j\neq h_{i_0}\})<\min\{\frac{1}{2i_0^n}, \frac{1}{2i_0^p}\}\frac{1}{j}.
\end{equation*}
Hence,
\begin{equation*}
\begin{aligned}
&\left|\int_{\partial K}h_{i_0}^{-n}(x)dC_0(K,x)-\int_{\partial K}g_j^{-n}(x)dC_0(K,x)\right|\leq \frac{1}{2i_0^n}\frac{1}{j}2i_0^n=\frac{1}{j},\\
&\left|\int_{\partial K}h_{i_0}^p(x)(h_K(\nu_K(x)))^{1-p}dC_{n-1}(K,x)-\int_{\partial K}g_j^p(x)(h_K(\nu_K(x)))^{1-p}dC_{n-1}(K,x)\right|\leq \frac{1}{2i_0^p}\frac{1}{j}2i_0^p=\frac{1}{j}.
\end{aligned}
\end{equation*}
This implies that $\lim_{j\rightarrow \infty}L_2(g_j)=L_2(h_{i_0})$. As a result, there exists $j_0$ such that 
\begin{equation}
\label{eq_local 1010}
L_2(g_{j_0})\leq L_2(h_{i_0})+\varepsilon/2.
\end{equation}

Choose $g=g_{j_0}$. By \eqref{eq_local 1005} and \eqref{eq_local 1010}, such a $g$ will satisfy \eqref{eq_local 1011}.
\end{proof}

It is immediate from \eqref{eq_local 1012} and \eqref{eq_local 1013} that
\begin{equation}
\label{eq_local 1014}
\begin{aligned}
&\inf_{g\in T_1}\left\{\left(\int_{\partial K}g^{-n}(x)H_K(x)d\mathcal{H}^{n-1}(x)\right)^{\frac{p}{n+p}}\left(\int_{\partial K}g^p(x)(h_K(\nu_K(x)))^{1-p}d\mathcal{H}^{n-1}(x)\right)^{\frac{n}{n+p}}\right\}\\
=&\inf_{g\in T_2}\left\{\left(\int_{\partial K}g^{-n}(x)dC_0(K,x)\right)^{\frac{p}{n+p}}\left(\int_{\partial K} g^p(x)(h_K(\nu_K(x)))^{1-p}dC_{n-1}(K,x)\right)^{\frac{n}{n+p}}\right\}.
\end{aligned}
\end{equation}

Theorem \ref{thm_new_2} suggests, in addition to \eqref{eq_new_6}, \eqref{eq_new_7}, and \eqref{eq_new_8}, there is a new representation of the existing notion of $L_p$ affine surface area:

\begin{definition}Let $p>0$ be a real number and $K\subset \R{n}$ be a convex body that contains the origin in its interior. The $L_p$ affine surface area of $K$ can be defined by
\begin{equation}
\label{eq_new_9}
\Omega_p(K)=\inf_{g}\left\{\left(\int_{\partial K}g^{-n}(x)dC_0(K,x)\right)^{\frac{p}{n+p}}\left(\int_{\partial K} g^p(x)(h_K(\nu_K(x)))^{1-p}dC_{n-1}(K,x)\right)^{\frac{n}{n+p}}\right\},
\end{equation} 
where the infimum is taken over all positive continuous functions $g:\partial K\rightarrow \mathbb{R}$.
\end{definition}

Note that \eqref{eq_new_9} is the $L_p$ analogue of \eqref{eq_new_5}.

It is worthwhile to point out that, as can be seen from the proofs in this section, Definition \eqref{eq_new_9} is ``dual'' to Definition \eqref{eq_new_8}. Since it has already been established that Definitions \eqref{eq_new_6},  \eqref{eq_new_7}, \eqref{eq_new_8} are the same, it follows from Theorem \ref{thm_new_2} that Definition \eqref{eq_new_9} is equivalent to Definition \eqref{eq_new_6}. However, we wish to give a direct proof of the equivalence between Definitions \eqref{eq_new_9} and \eqref{eq_new_6} as this will further reveal the relationship between the two formulations of $L_p$ affine surface area. This will be carried out in Section \ref{sec_equivalences}.

\section{Properties of $L_p$ Affine Surface Area}
\label{sec_properties}
In this section, some basic properties of $L_p$ affine surface area will be shown using the new representation \eqref{eq_new_9} (\eqref{eq_new_5} if $p=1$). Although the properties given in this section are not new, different proofs are given using Definition \eqref{eq_new_9} that do not depend on any results about $L_p$ affine surface area established using the previously known forms.

The following proposition shows that $\Omega_p$ is a homogeneous functional on the set of convex bodies containing the origin in their interiors.

\begin{prop}
\label{prop_tmp 100}
Let $p>0$ be a real number. Suppose $K\in \mathcal{K}_0^n$. For $\lambda> 0$, $\Omega_p(\lambda K)=\lambda^{\frac{n(n-p)}{n+p}}\Omega_p(K)$.
\end{prop}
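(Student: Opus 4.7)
The plan is to reduce the assertion to a direct change-of-variables computation on the boundary, exploiting the explicit dependence of each quantity in the definition \eqref{eq_new_9} on $K$. First I would record how each ingredient behaves under the dilation $y = \lambda x$, $x \in \partial K$, $y \in \partial(\lambda K)$: support functions are $1$-homogeneous, so $h_{\lambda K} = \lambda h_K$; outer unit normals are preserved, so $\nu_{\lambda K}(\lambda x) = \nu_K(x)$ and the spherical image of a dilated Borel set is unchanged. Consequently, from the definitions in \eqref{eq_new 9}, for every Borel $\beta \subset \partial K$ one has
\begin{equation*}
C_0(\lambda K, \lambda \beta) = \mathcal{H}^{n-1}(\nu(\lambda K, \lambda \beta)) = \mathcal{H}^{n-1}(\nu(K,\beta)) = C_0(K,\beta),
\end{equation*}
while the $(n-1)$-dimensional Hausdorff measure rescales in the usual way, giving
\begin{equation*}
C_{n-1}(\lambda K, \lambda \beta) = \mathcal{H}^{n-1}(\lambda \beta) = \lambda^{n-1} C_{n-1}(K,\beta).
\end{equation*}

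Next, I would set up the bijection $g \leftrightarrow g_0$ between positive continuous functions on $\partial(\lambda K)$ and on $\partial K$ defined by $g_0(x) = g(\lambda x)$. Applying the two transformation rules above to the two integrals appearing in \eqref{eq_new_9} for $\lambda K$, the $C_0$-integral is unchanged:
\begin{equation*}
\int_{\partial(\lambda K)} g^{-n}(y)\, dC_0(\lambda K,y) = \int_{\partial K} g_0^{-n}(x)\, dC_0(K,x),
\end{equation*}
whereas the $C_{n-1}$-integral picks up a factor of $\lambda^{1-p}$ from the support function and $\lambda^{n-1}$ from the rescaling of $C_{n-1}$, yielding
\begin{equation*}
\int_{\partial(\lambda K)} g^p(y) (h_{\lambda K}(\nu_{\lambda K}(y)))^{1-p}\, dC_{n-1}(\lambda K,y) = \lambda^{n-p} \int_{\partial K} g_0^p(x) (h_K(\nu_K(x)))^{1-p}\, dC_{n-1}(K,x).
\end{equation*}

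Combining the two factors with the exponents $\tfrac{p}{n+p}$ and $\tfrac{n}{n+p}$ prescribed by \eqref{eq_new_9} gives an overall multiplicative constant $\lambda^{(n-p) n/(n+p)} = \lambda^{n(n-p)/(n+p)}$ relating the functional evaluated at $g$ on $\lambda K$ to the functional evaluated at $g_0$ on $K$. Since $g \mapsto g_0$ is a bijection between the two classes of admissible test functions, taking the infimum over $g$ on both sides yields the claimed identity $\Omega_p(\lambda K) = \lambda^{n(n-p)/(n+p)} \Omega_p(K)$.

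There is no serious obstacle here; the only point worth guarding against is a sign/exponent slip in consolidating the two scaling factors, and a brief check that $g_0$ is indeed continuous and positive on $\partial K$ (it is, as the composition of $g$ with the homeomorphism $x \mapsto \lambda x$), so that both infima are genuinely taken over the same class via the bijection.
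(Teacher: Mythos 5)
Your proposal is correct and follows essentially the same route as the paper: both exploit $C_0(\lambda K,\lambda\beta)=C_0(K,\beta)$, the scaling $C_{n-1}(\lambda K,\lambda\beta)=\lambda^{n-1}C_{n-1}(K,\beta)$ together with $h_{\lambda K}=\lambda h_K$, and the bijection between positive continuous test functions on $\partial K$ and $\partial(\lambda K)$ induced by $x\mapsto\lambda x$, before taking infima. No gaps; your explicit remark that the correspondence of test functions is a bijection is a minor point the paper leaves implicit.
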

\begin{proof}
For each positive continuous $g$ on $\partial K$, define $\tilde{g}$ on $\partial (\lambda K)$ by
\begin{equation}
\label{eq_new_10}
\tilde{g}(y)=g(x),\phantom{asdweqewe} \text{if } y=\lambda x.
\end{equation}
Notice that for each Borel set $\beta\subset\partial K$,
\begin{equation}
\label{eq_new_11}
C_0(\lambda K, \lambda\beta)=\mathcal{H}^{n-1}(\nu({\lambda K},\lambda\beta))=\mathcal{H}^{n-1}(\nu(K,\beta))=C_0(K,\beta).
\end{equation}
By \eqref{eq_new_11} and \eqref{eq_new_10},
\begin{equation}
\label{eq_new_12}
\begin{aligned}
\int_{\partial (\lambda K)}\tilde{g}^{-n}(y)dC_0(\lambda K,y)=&\int_{\partial K}\tilde{g}^{-n}(\lambda x)dC_0(K,x)\\
=&\int_{\partial K}g^{-n}(x)dC_0(K,x).
\end{aligned}
\end{equation}
By the homogeneity of the support function,
\begin{equation}
\label{eq_new_13}
\begin{aligned}
\int_{\partial (\lambda K)}\tilde{g}^p(y)(h_{\lambda K}(\nu_{\lambda K}(y)))^{1-p}dC_{n-1}(\lambda K,y)
=& \int_{\partial K}\tilde{g}^{p}(\lambda x)\lambda^{1-p}(h_K(\nu_K(x)))^{1-p}\lambda^{n-1}dC_{n-1}(K,x)\\
=&\lambda^{n-p}\int_{\partial K}g^p(x)(h_K(\nu_K(x)))^{1-p}dC_{n-1}(K,x).
\end{aligned}
\end{equation}
By \eqref{eq_new_9}, \eqref{eq_new_12}, and \eqref{eq_new_13}, we get the desired result.
\end{proof}

It is trivial to see that when $p=1$, translation invariance is satisfied by $\Omega_1$.
\begin{prop}
Let $p>0$ be a real number. Suppose $K$ is polytope that contains the origin in its interior. Then,
\begin{equation*}
\Omega_p(K)=0.
\end{equation*}
\end{prop}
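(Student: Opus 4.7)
The plan is to construct an explicit sequence of positive continuous test functions $g_k:\partial K\to\mathbb{R}$ for which the functional in \eqref{eq_new_9} tends to zero, forcing the infimum to vanish. The structural fact driving the argument is that for a polytope $K$ the zeroth curvature measure $C_0(K,\cdot)$ is a finite sum of point masses concentrated at the vertices $V(K)=\{v_1,\ldots,v_N\}$ of $K$. Indeed, by \eqref{eq_new 9}, $C_0(K,\beta)=\mathcal{H}^{n-1}(\nu(K,\beta))$; at a boundary point in the relative interior of a $j$-dimensional face with $j>0$ the set of outer unit normals is an $(n-j-1)$-dimensional spherical polygon, hence $\mathcal{H}^{n-1}$-negligible on $S^{n-1}$, whereas at each vertex the normal cone meets $S^{n-1}$ in a full $(n-1)$-dimensional spherical polygon. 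Since the normal cones at distinct vertices overlap only on lower-dimensional cones, summing gives $C_0(K,\partial K)=\mathcal{H}^{n-1}(S^{n-1})=n\omega_n<\infty$.

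Now the construction is elementary. Because $K\in\mathcal{K}_0^n$, the support function $h_K$ is positive and continuous on $S^{n-1}$, so $M:=\sup_{S^{n-1}}h_K^{1-p}<\infty$. For each integer $k\geq 1$ I pick $r_k>0$ small enough that the balls $B(v_i,r_k)$ are pairwise disjoint and
\begin{equation*}
\mathcal{H}^{n-1}\!\left(\partial K\cap\bigcup_{i=1}^N B(v_i,r_k)\right)\leq k^{-2p},
\end{equation*}
which is possible because $V(K)$ is finite and $\mathcal{H}^{n-1}(\partial K\cap B(v_i,r))\to 0$ as $r\to 0$. I then define $g_k:\partial K\to\mathbb{R}$ by $g_k(x)=k+(k^{-1}-k)\|x-v_i\|/r_k$ whenever $x\in\partial K\cap B(v_i,r_k)$ and $g_k(x)=k^{-1}$ otherwise. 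Then $g_k$ is continuous on $\partial K$ with $g_k(v_i)=k$ and $k^{-1}\leq g_k\leq k$.

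With this choice, since $C_0(K,\cdot)$ is supported on $\{v_1,\ldots,v_N\}$,
\begin{equation*}
\int_{\partial K}g_k^{-n}(x)\,dC_0(K,x)=k^{-n}\sum_{i=1}^N C_0(K,\{v_i\})=n\omega_n\,k^{-n},
\end{equation*}
while splitting the second integral over $\cup_i B(v_i,r_k)$ and its complement yields
\begin{equation*}
\int_{\partial K}g_k^{p}(x)\,(h_K(\nu_K(x)))^{1-p}\,dC_{n-1}(K,x)\leq M\bigl(k^p\cdot k^{-2p}+k^{-p}\mathcal{H}^{n-1}(\partial K)\bigr)\leq C\,k^{-p}
\end{equation*}
for a constant $C$ independent of $k$. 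Inserting these two bounds into \eqref{eq_new_9} controls the functional by a constant multiple of $k^{-np/(n+p)}\cdot k^{-np/(n+p)}=k^{-2np/(n+p)}$, which tends to $0$ as $k\to\infty$. The only mildly delicate point is the quantitative choice of $r_k$, which reduces to the observation that $\mathcal{H}^{n-1}(\partial K\cap B(v_i,r))$ is comparable to $r^{n-1}$ for small $r$, since $\partial K$ is locally a Lipschitz graph near each vertex; all remaining estimates are routine.
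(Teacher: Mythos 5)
Your proof is correct and rests on exactly the same observation as the paper's: for a polytope $C_0(K,\cdot)$ is concentrated on the finitely many vertices, so a test function that is large near the vertices and small elsewhere drives the product in \eqref{eq_new_9} to zero. Your version merely makes explicit the construction and estimates that the paper leaves as a one-line remark.
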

\begin{proof}
Note that the measure $C_0(K,\cdot)$ in this case is concentrated on the set of vertices of $K$, which has only finitely many points. This implies that we can let the second integral in \eqref{eq_new_9} be arbitrarily small while holding the value of the first integral constant. Hence $\Omega_p(K)=0$.
\end{proof}

Another important property of $L_p$ affine surface area that historically took a long time to prove (settled in \cite{lutwak1991,lutwak1996}) is its upper semi-continuity.

\begin{theorem}
{
\label{prop_new_1}
Let $p>0$ be a real number. Then, $\Omega_p$ is upper semi-continuous with respect to the Hausdorff metric.
}
\end{theorem}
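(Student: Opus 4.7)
The plan is to realize $\Omega_p$ as the pointwise infimum of a family of functionals, each continuous in $K$ with respect to the Hausdorff metric, and then invoke the fact that any infimum of continuous functionals is upper semi-continuous.

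The first step is to reformulate the infimum in \eqref{eq_new_9} as one taken over positive continuous functions defined on all of $\R{n}$, rather than on $\partial K$. Every positive $g\in C(\partial K)$ is bounded away from $0$ on the compact set $\partial K$, so by Tietze's theorem it admits a positive continuous extension $G\in C(\R{n})$; conversely every positive $G\in C(\R{n})$ restricts to such a $g$ on $\partial K$. Consequently, with
\[
\mathcal{F}_G(K):=\left(\int_{\partial K}G^{-n}\,dC_0(K,\cdot)\right)^{\frac{p}{n+p}}\left(\int_{\partial K}G^p\, h_K(\nu_K)^{1-p}\,dC_{n-1}(K,\cdot)\right)^{\frac{n}{n+p}},
\]
one has $\Omega_p(K)=\inf_G \mathcal{F}_G(K)$, the infimum being over positive $G\in C(\R{n})$; the index set no longer depends on $K$.

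For fixed positive $G\in C(\R{n})$, I would then verify continuity of $\mathcal{F}_G$ on $\mathcal{K}_0^n$. The first factor is continuous because $G^{-n}$ is a fixed continuous function on $\R{n}$ and $K\mapsto C_0(K,\cdot)$ is weakly continuous, a fact recorded in the preliminaries. For the second factor, replace $h_K(\nu_K(x))$ by $\langle x,\nu_K(x)\rangle$ (valid $\mathcal{H}^{n-1}$-a.e.\ on $\partial K$); picking $r>0$ with $B(o,r)\subset K_i$ for all large $i$, the function
\[
\Phi(x,u):=G(x)^p\bigl(\max\{\langle x,u\rangle,\,r/2\}\bigr)^{1-p},\qquad (x,u)\in\R{n}\times S^{n-1},
\]
is continuous on the whole product and satisfies $\Phi(x,\nu_{K_i}(x))=G(x)^p h_{K_i}(\nu_{K_i}(x))^{1-p}$ ($\mathcal{H}^{n-1}$-a.e.) on $\partial K_i$, and similarly on $\partial K$. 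Continuity of the second factor thus reduces to the weak continuity in $K$ of the support (generalized curvature) measure on $\R{n}\times S^{n-1}$, a classical fact from \cite{schneider1993}.

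The main obstacle is precisely this second factor: its integrand mixes the fixed function $G$ on $\R{n}$ with the $K$-dependent, only measurable Gauss map $\nu_K$, so weak continuity of the ordinary curvature measure $C_{n-1}=\mathcal{H}^{n-1}\mres \partial K$ alone is insufficient---it sees the position variable $x$ but not $\nu_K(x)$. Once the support-measure refinement is invoked, $\mathcal{F}_G$ is continuous for each $G$, and the infimum-of-continuous-functions principle delivers upper semi-continuity of $\Omega_p$.
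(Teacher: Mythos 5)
Your argument is correct, and its skeleton---writing $\Omega_p$ as an infimum of functionals of $K$ and using that an infimum of continuous functionals is upper semi-continuous---is exactly the strategy of the paper, whose proof of Theorem \ref{prop_new_1} is a one-sentence appeal to the weak continuity of $C_0(K,\cdot)$ and $C_{n-1}(K,\cdot)$. What you do differently is to supply two ingredients that this sketch leaves implicit. First, the Tietze-extension step, which replaces the $K$-dependent family of positive $g\in C(\partial K)$ in \eqref{eq_new_9} by the fixed family of positive $G\in C(\R{n})$, is needed before the infimum-of-continuous-functionals principle applies literally, and you carry it out correctly. Second, and more substantively, you note that for $p\neq 1$ the weight $h_K(\nu_K(x))^{1-p}$ in the second factor depends on $K$ through the (only a.e.\ defined) Gauss map, so weak continuity of $C_{n-1}(K,\cdot)=\mathcal{H}^{n-1}\mres\partial K$ alone does not give continuity of that factor; your remedy---using $h_K(\nu_K(x))=\langle x,\nu_K(x)\rangle$ at regular points, truncating below via $B(o,r)\subset K_i$ so the integrand becomes a fixed continuous function $\Phi$ on $\R{n}\times S^{n-1}$, and invoking the weak continuity of the top-order support measure from \cite{schneider1993}---is a genuinely different (and, for $p\neq 1$, stronger) key lemma than the curvature-measure continuity the paper cites, while for $p=1$ the paper's two measures already suffice. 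The only bookkeeping to record is that along a convergent sequence $K_i\to K$ in $\mathcal{K}_0^n$ all bodies lie in a common ball and contain a common ball $B(o,r)$ for large $i$, so the test functions are bounded on a common compact set and the truncation does not alter the integrals; you indicate this, so the proof stands as a complete and somewhat more careful implementation of the paper's approach.
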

\begin{proof}
This is a direct result from the weak continuity of $C_0(K,\cdot)$, $C_{n-1}(K,\cdot)$ and the fact that the infimum of continuous functionals is upper semi-continuous.
\end{proof}

By \eqref{eq_local 1014}, the following variant of \eqref{eq_new_9} will give us more flexibility in choosing the function $g$:
\begin{equation}
\label{eq_new_14}
\Omega_p(K)=\inf_{g\in T_1}\left\{\left(\int_{\partial K}g^{-n}(x)H_K(x)d\mathcal{H}^{n-1}(x)\right)^{\frac{p}{n+p}}\left(\int_{\partial K}g^p(x)(h_K(\nu_K(x)))^{1-p}d\mathcal{H}^{n-1}(x)\right)^{\frac{n}{n+p}}\right\}.
\end{equation}

We will need the following two lemmas, which were established by Sch\"{u}tt \& Werner in \cite{MR2074173}. 

\begin{lemma}
Let $K\in \mathcal{K}_0^n$ and $\phi:\R{n}\rightarrow \R{n}$ be a linear map with determinant either $1$ or $-1$. For each integrable function $f:\partial K\rightarrow \mathbb{R}$,
\begin{equation}
\label{tmp501}
\int_{\partial K}f(x)d\mathcal{H}^{n-1}(x)=\int_{\partial(\phi K)}f(\phi^{-1}(y))\vectornorm{\phi^{-t}(\nu_K(\phi^{-1}(y)))}^{-1}d\mathcal{H}^{n-1}(y).
\end{equation}  
\end{lemma}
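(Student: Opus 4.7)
The plan is to apply Federer's coarea formula (Theorem \ref{coarea formula}) with $W=\partial K$, $Z=\partial(\phi K)$, and $\phi|_{\partial K}\colon \partial K\rightarrow \partial(\phi K)$. Since $\phi$ is linear and invertible on $\R{n}$, its restriction is a bijective bi-Lipschitz map, and the boundary of a convex body is $\mathcal{H}^{n-1}$ measurable and $(\mathcal{H}^{n-1},n-1)$ rectifiable, so the hypotheses are satisfied. Because $\phi$ is a bijection, every fiber $\phi^{-1}(\{y\})$ is a singleton, so the coarea formula collapses to
\begin{equation*}
\int_{\partial K}g(x)\cdot (\mathcal{H}^{n-1}\mres \partial K,n-1)\ap\,J\phi(x)\,d\mathcal{H}^{n-1}(x)=\int_{\partial(\phi K)}g(\phi^{-1}(y))\,d\mathcal{H}^{n-1}(y)
\end{equation*}
for any non-negative $\mathcal{H}^{n-1}\mres \partial K$ measurable $g$. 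The desired identity will then follow by setting $g(x)=f(x)\vectornorm{\phi^{-t}(\nu_K(x))}^{-1}$, once I identify the approximate Jacobian of $\phi$ along $\partial K$ as $\vectornorm{\phi^{-t}(\nu_K(x))}$.

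The main step is this Jacobian computation. At each regular boundary point $x_0\in \partial K$ (and hence at $\mathcal{H}^{n-1}$ almost every $x_0$), the approximate tangent cone of $\mathcal{H}^{n-1}\mres \partial K$ is the hyperplane $\nu_K(x_0)^\perp$, and since $\phi$ is its own differential everywhere, the approximate differential of $\phi$ at $x_0$ along this tangent space is simply $\phi|_{\nu_K(x_0)^\perp}\colon \nu_K(x_0)^\perp\rightarrow \phi(\nu_K(x_0)^\perp)$. I would evaluate its Jacobian by choosing adapted orthonormal bases: an orthonormal basis $v_1,\ldots,v_{n-1}$ of $\nu_K(x_0)^\perp$ completed by $\nu_K(x_0)$, and an orthonormal basis $w_1,\ldots,w_{n-1}$ of $\phi(\nu_K(x_0)^\perp)$ completed by the unit normal $u'=\phi^{-t}(\nu_K(x_0))/\vectornorm{\phi^{-t}(\nu_K(x_0))}$ to $\phi(\nu_K(x_0)^\perp)$, which is well defined because $\langle \phi^{-t}(\nu_K(x_0)),\phi(v)\rangle=\langle \nu_K(x_0),v\rangle=0$ for $v\in \nu_K(x_0)^\perp$.

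In these bases the matrix of $\phi$ is block upper-triangular, because $\phi(v_i)\in \phi(\nu_K(x_0)^\perp)$ has zero $u'$-component; its lower-right entry is $\langle \phi(\nu_K(x_0)),u'\rangle$, which a one-line computation using $\phi^t\phi^{-t}=I$ shows to equal $1/\vectornorm{\phi^{-t}(\nu_K(x_0))}$. Since the global $|{\det \phi}|=1$, the $(n-1)\times(n-1)$ block must have determinant of absolute value $\vectornorm{\phi^{-t}(\nu_K(x_0))}$, which is exactly the $(n-1)$-dimensional Jacobian of $\phi|_{\nu_K(x_0)^\perp}$. Substituting this into the coarea identity and taking $g$ as above gives \eqref{tmp501}. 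The measure-theoretic scaffolding (rectifiability, regular points carrying full $\mathcal{H}^{n-1}$ measure, $\phi$ being its own approximate differential) is all standard; the only genuine content is the linear-algebra identification of the tangential Jacobian with $\vectornorm{\phi^{-t}(\nu_K(x))}$, and that is what I expect to be the technical heart of the argument, though it is not truly difficult.
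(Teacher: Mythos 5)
The paper does not prove this lemma at all---it is imported verbatim from Sch\"utt \& Werner \cite{MR2074173}---so there is no internal argument to compare against. Your proof is correct and is essentially the standard one: apply the area/coarea formula to the bi-Lipschitz bijection $\phi|_{\partial K}\colon \partial K\to\partial(\phi K)$, and identify the tangential Jacobian at a regular point $x_0$ as $\vectornorm{\phi^{-t}(\nu_K(x_0))}$ by writing $\phi$ in adapted orthonormal bases, where the last row is $(0,\dots,0,\langle\phi(\nu_K(x_0)),u'\rangle)$ with $\langle\phi(u),\phi^{-t}(u)\rangle=\langle u,u\rangle=1$ forcing the corner entry to be $1/\vectornorm{\phi^{-t}(u)}$, so that $|\det\phi|=1$ yields the claimed tangential determinant. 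Two small points worth making explicit if you write this up: first, Theorem \ref{coarea formula} as stated applies to non-negative integrands, so for general integrable $f$ you should split into $f^+-f^-$; second, the identification $\Tan^{n-1}(\mathcal{H}^{n-1}\mres\partial K,x_0)=\nu_K(x_0)^\perp$ at $\mathcal{H}^{n-1}$-a.e.\ $x_0$ deserves a sentence---by Federer's Theorem 3.2.19 the approximate tangent cone is an $(n-1)$-dimensional subspace a.e., and at a regular point it lies in the tangent cone of $\partial K$, which is contained in the closed half-space $\{v:\langle v,\nu_K(x_0)\rangle\le 0\}$; an $(n-1)$-dimensional subspace contained in a half-space must equal the bounding hyperplane. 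With those remarks the argument is complete.
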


\begin{lemma}
Let $K\in \mathcal{K}_0^n$ and $\phi:\R{n}\rightarrow \R{n}$ be a linear map with determinant either $1$ or $-1$. Suppose $x$ is a normal boundary point of $K$. Then $\phi(x)$ is a normal boundary point of $\phi K$ and moreover,
\begin{equation}
\label{tmp502}
H_K(x)=\vectornorm{\phi^{-t}(\nu_K(x))}^{n+1}H_{\phi K}(\phi(x)).
\end{equation}
\end{lemma}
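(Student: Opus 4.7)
The plan is to work in local graphical coordinates. First I would identify the new outer normal and tangent hyperplane; then construct a local graph representation of $\partial(\phi K)$ near $\phi(x)$; then show this inherits generalized second-order differentiability from $f$; and finally take a determinant to extract the transformation formula. Setting $u_0=\nu_K(x)$, $\lambda=\vectornorm{\phi^{-t}(u_0)}$, and $u_0^\phi=\phi^{-t}(u_0)/\lambda$, elementary support considerations show $u_0^\phi$ is the unique outer unit normal of $\phi K$ at $\phi(x)$, so $\phi(x)$ is a regular boundary point. Writing $T=u_0^\perp$ and $T'=(u_0^\phi)^\perp$, the identity $\langle\phi(v),\phi^{-t}(u_0)\rangle=\langle v,u_0\rangle=0$ for $v\in T$ gives $\phi(T)\subseteq T'$, and dimensions agree, so $\psi:=\phi|_T:T\to T'$ is an isomorphism.

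By \eqref{eq_local 31}, near $x$ we have $\partial K=\{x+v-f(v)u_0:v\in T\}$ with $f$ convex, $f(0)=0$, $\nabla f(0)=0$, and second-order differentiable at $0$ in the generalized sense. Applying $\phi$, a point of $\partial(\phi K)$ near $\phi(x)$ is $\phi(x)+\phi(v)-f(v)\phi(u_0)$. Using $\langle\phi(v),u_0^\phi\rangle=0$ and $\langle\phi(u_0),u_0^\phi\rangle=1/\lambda$, splitting along $T'\oplus\mathbb{R}u_0^\phi$ yields
\[
\phi(x)+\phi(v)-f(v)\phi(u_0)=\phi(x)+w(v)-\tilde f(w(v))\,u_0^\phi,
\]
where $w(v)=\phi(v)-f(v)\bigl(\phi(u_0)-u_0^\phi/\lambda\bigr)\in T'$ and $\tilde f(w(v))=f(v)/\lambda$. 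Since $f(v)=O(\vectornorm{v}^2)$, the map $v\mapsto w(v)$ has derivative $\psi$ at the origin and is a local bi-Lipschitz homeomorphism, with inverse $v(w)=\psi^{-1}(w)+O(\vectornorm{w}^2)$. Substituting the expansion $f(v)=\tfrac12\langle Af(0)v,v\rangle+o(\vectornorm{v}^2)$ yields
\[
\tilde f(w)=\tfrac{1}{2\lambda}\bigl\langle Af(0)\psi^{-1}(w),\psi^{-1}(w)\bigr\rangle+o(\vectornorm{w}^2),
\]
which exhibits $\tilde f$ as second-order differentiable at $0$ in the generalized sense, so $\phi(x)\in\nor(\phi K)$, with $A\tilde f(0)=\lambda^{-1}\psi^{-\ast}Af(0)\psi^{-1}:T'\to T'$.

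Taking determinants gives $H_{\phi K}(\phi(x))=\det A\tilde f(0)=\lambda^{-(n-1)}\abs{\det\psi}^{-2}H_K(x)$. To evaluate $\abs{\det\psi}$, I apply the base-times-height identity to the parallelepiped spanned by $\{\phi(e_1),\ldots,\phi(e_{n-1}),\phi(u_0)\}$, for any orthonormal basis $\{e_i\}$ of $T$: its $n$-volume is $\abs{\det\phi}=1$, the $(n-1)$-volume of its base $\{\phi(e_1),\ldots,\phi(e_{n-1})\}$ in $T'$ equals $\abs{\det\psi}$, and its perpendicular height above $T'$ is $\abs{\langle\phi(u_0),u_0^\phi\rangle}=1/\lambda$. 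Therefore $\abs{\det\psi}=\lambda$, giving $H_{\phi K}(\phi(x))=\lambda^{-(n+1)}H_K(x)$, which rearranges to \eqref{tmp502}.

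The main obstacle is the remainder bookkeeping in the middle step: the change of variable $v\mapsto w$ is itself perturbed by the term $-f(v)(\phi(u_0)-u_0^\phi/\lambda)$, so one must verify carefully that this quadratic perturbation does not spoil the $o(\vectornorm{w}^2)$ error upon inversion. This can be handled either by a short bootstrap using the Lipschitz continuity of $w\mapsto v$ combined with $f(v)=O(\vectornorm{v}^2)$, or by arguing at the level of subgradient choices and invoking the equivalent Alexandrov-type characterization of generalized second-order differentiability recalled in the preliminaries.
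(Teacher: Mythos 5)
Your proof is correct, but note that the paper itself offers no argument to compare against: this lemma is simply quoted from Sch\"utt \& Werner \cite{MR2074173}, so your local-graph derivation serves as a self-contained proof of a cited result. The key steps all check out: the identification $\nu_{\phi K}(\phi(x))=\phi^{-t}(u_0)/\lambda$ with $\lambda=\vectornorm{\phi^{-t}(u_0)}$, the orthogonal splitting along $T'\oplus\mathbb{R}u_0^\phi$ giving $\tilde f(w(v))=f(v)/\lambda$, and the base-times-height computation $\abs{\det\psi}\cdot\lambda^{-1}=\abs{\det\phi}=1$, which combined with $A\tilde f(0)=\lambda^{-1}(\psi^{-1})^{*}Af(0)\psi^{-1}$ yields $H_{\phi K}(\phi(x))=\lambda^{-(n-1)}\abs{\det\psi}^{-2}H_K(x)=\lambda^{-(n+1)}H_K(x)$, i.e.\ \eqref{tmp502}; this is also consistent with the paper's definition of $H_K$ as the product of the eigenvalues of the generalized Hessian of the graph function in \eqref{eq_normal representation}. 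The one step you flag as delicate does close as you suggest: since $f$ is convex, $f\geq 0$, $f(o)=0$ and $f(v)=O(\vectornorm{v}^2)$, its subgradients on $B(o,\rho)\cap T$ have norm $O(\rho)$, so $w(v)=\phi(v)-f(v)\bigl(\phi(u_0)-u_0^\phi/\lambda\bigr)$ is bi-Lipschitz near the origin with $\vectornorm{v}\asymp\vectornorm{w}$; hence $v(w)=\psi^{-1}(w)+O(\vectornorm{w}^2)$, the cross terms contribute $O(\vectornorm{w}^3)$, and the $o(\vectornorm{v}^2)$ remainder of $f$ becomes $o(\vectornorm{w}^2)$, so $\tilde f$ is indeed second order differentiable at $o$ in the generalized sense and $\phi(x)\in\nor(\phi K)$. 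A minor point worth making explicit in a final write-up is surjectivity of $v\mapsto w(v)$ onto a neighborhood of $o$ in $T'$ (so that the formula really computes the graph function of $\phi K$ at $\phi(x)$); this follows since $\phi$ maps a relative neighborhood of $x$ in $\partial K$ onto one of $\phi(x)$ in $\partial(\phi K)$ and orthogonal projection of the latter onto $T'$ covers a neighborhood of $o$, $\phi(x)$ being a regular boundary point.
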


The next proposition shows that $\Omega_p$ is invariant under volume preserving linear transformations.
\begin{prop}
\label{prop_tmp 101}
Let $p>0$ be a real number. Suppose $K\in \mathcal{K}_0^n$ and $\phi:\R{n}\rightarrow\R{n}$ is a linear map with determinant either $1$ or $-1$. Then
\begin{equation*}
\Omega_p(\phi K)=\Omega_p(K).
\end{equation*}
\end{prop}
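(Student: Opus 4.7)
The plan is to use the equivalent formulation \eqref{eq_new_14} of $\Omega_p$ (which agrees with \eqref{eq_new_9} by \eqref{eq_local 1014}), since working with the larger class $T_1$ of merely measurable functions is much more convenient under pullback than working with continuous functions. Given any $g\in T_1$ on $\partial K$, I would construct a partner $\tilde g\in T_1$ on $\partial(\phi K)$ with the same value of the functional in \eqref{eq_new_14}; the reverse inequality follows immediately by applying the same construction to $\phi^{-1}$ and $\phi K$.

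The natural choice of pullback is
\[
\tilde g(y)=\frac{g(\phi^{-1}(y))}{\vectornorm{\phi^{-t}(\nu_K(\phi^{-1}(y)))}},
\]
defined $\mathcal{H}^{n-1}$-a.e.\ on $\partial(\phi K)$ (and assigned any positive measurable value on the remaining null set). For the first integral in \eqref{eq_new_14}, I would apply \eqref{tmp501} to transport $\int_{\partial(\phi K)}\tilde g^{-n}(y)H_{\phi K}(y)\,d\mathcal{H}^{n-1}(y)$ to an integral over $\partial K$, and then replace $H_{\phi K}(\phi x)$ by $\vectornorm{\phi^{-t}\nu_K(x)}^{-(n+1)}H_K(x)$ via \eqref{tmp502}. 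The three appearances of $\vectornorm{\phi^{-t}\nu_K(x)}$ telescope exactly, leaving $\int_{\partial K}g^{-n}(x)H_K(x)\,d\mathcal{H}^{n-1}(x)$.

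For the second integral I would combine the standard identities $h_{\phi K}(v)=h_K(\phi^t v)$ and $\nu_{\phi K}(\phi x)=\phi^{-t}\nu_K(x)/\vectornorm{\phi^{-t}\nu_K(x)}$ to obtain $h_{\phi K}(\nu_{\phi K}(\phi x))=h_K(\nu_K(x))/\vectornorm{\phi^{-t}\nu_K(x)}$. A second application of \eqref{tmp501} then shows that the norm factors cancel and the integral on $\partial(\phi K)$ reduces to $\int_{\partial K}g^p(x)h_K(\nu_K(x))^{1-p}\,d\mathcal{H}^{n-1}(x)$. In particular $\tilde g\in T_1$, the two functionals agree, and therefore $\Omega_p(\phi K)\le\Omega_p(K)$; applying the same argument with $(K,\phi)$ replaced by $(\phi K,\phi^{-1})$ yields the reverse inequality.

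The main obstacle is really just bookkeeping: the weight in the definition of $\tilde g$ must be chosen so that three distinct appearances of the factor $\vectornorm{\phi^{-t}\nu_K(x)}$---the Jacobian from \eqref{tmp501}, the curvature-transformation factor in \eqref{tmp502}, and the normalizing denominator that turns $\phi^{-t}\nu_K(x)$ into the unit normal $\nu_{\phi K}(\phi x)$---balance simultaneously in both integrals of \eqref{eq_new_14} for every $p>0$. The displayed choice is the unique one that achieves this, after which the argument reduces to two direct substitutions.
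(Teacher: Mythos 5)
Your proposal is correct and is essentially the paper's own argument: the paper also works with the $T_1$ form \eqref{eq_new_14}, defines the same pullback $\tilde g(y)=g(\phi^{-1}(y))\vectornorm{\phi^t(\nu_{\phi K}(y))}$ (which by \eqref{tmp504} is exactly your weight $\vectornorm{\phi^{-t}(\nu_K(\phi^{-1}(y)))}^{-1}$), and combines \eqref{tmp501}, \eqref{tmp502}, and the identities \eqref{tmp503}--\eqref{tmp505} so that the norm factors cancel in both integrals. The reduction to one inequality via $\phi^{-1}$ is likewise the same as in the paper.
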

\begin{proof}
Note that $|\det(\phi^{-1})|=1$ and $\phi^{-1}(\phi K)=K$. Thus, we only need to show $\Omega_p(K)\geq \Omega_p(\phi K)$.

If $x$ is a regular boundary point of $K$, then $\phi (x)$ is a regular boundary point of $\phi K$ and 
\begin{equation}
\label{tmp503}
\nu_{\phi K}(\phi (x))=\frac{\phi^{-t}(\nu_K(x))}{\vectornorm{\phi^{-t}(\nu_K(x))}}.
\end{equation}
Hence,
\begin{equation}
\label{tmp504}
\vectornorm{\phi^t (\nu_{\phi K}(\phi(x)))}=\vectornorm{\phi^{-t}(\nu_K(x))}^{-1}.
\end{equation}
The definitions of support function and outer unit normal, together with \eqref{tmp503} and \eqref{tmp504}, imply
\begin{equation}
\label{tmp505}
h_K(\nu_K(x))=\vectornorm{\phi^t(\nu_{\phi K}(\phi(x)))}^{-1}h_{\phi K}(\nu_{\phi K}(\phi(x))).
\end{equation}

Let $g\in T_1$ be a function defined on $\partial K$ such that $\int_{\partial K}g^{-n}(x)H_K(x)d\mathcal{H}^{n-1}(x)<\infty$. By \eqref{tmp501}, \eqref{tmp502}, and \eqref{tmp504}, 
\begin{equation}
\label{tmp506}
\int_{\partial K}g^{-n}(x)H_K(x)d\mathcal{H}^{n-1}(x)=\int_{\partial(\phi K)}\vectornorm{\phi^t(\nu_{\phi K}(y))}^{-n}g^{-n}(\phi^{-1}(y))H_{\phi K}(y)d\mathcal{H}^{n-1}(y).
\end{equation}
By \eqref{tmp501}, \eqref{tmp504}, and \eqref{tmp505}, 
\begin{equation}
\label{tmp507}
\int_{\partial K}g^p(x)(h_K\circ \nu_K(x))^{1-p}d\mathcal{H}^{n-1}(x)=\int_{\partial (\phi K)}\vectornorm{\phi^{t}(\nu_{\phi K}(y))}^p g^p(\phi^{-1}(y))(h_{\phi K}(\nu_{\phi K}(y)))^{1-p}d\mathcal{H}^{n-1}(y).
\end{equation}
Let $\tilde{g}:\partial(\phi K)\rightarrow \mathbb{R}$ be defined as 
\begin{equation*}
\tilde{g}(x)=
\begin{cases}
g(\phi^{-1}(y))\vectornorm{\phi^t (\nu_{\phi K}(y))}, &\text{if }y\text{ is a regular boundary point of }K,\\
1,&\text{otherwise}.
\end{cases}
\end{equation*}
The fact that $\mathcal{H}^{n-1}$ almost all points on the boundary of a convex body are regular, the choice of $g$, and \eqref{tmp507} show that $\tilde{g}$ is a positive, $\mathcal{H}^{n-1}$ measurable function on $\partial(\phi K)$ and $$\int_{\partial(\phi K)}\tilde{g}^{p}(y)(h_{\phi K}(\nu_{\phi K}(y)))^{1-p}d\mathcal{H}^{n-1}(y)<\infty.$$ 

By \eqref{tmp506}, \eqref{tmp507}, and \eqref{eq_new_14}, we immediately have $\Omega_p(K)\geq\Omega_p(\phi K)$.
\end{proof}

An immediate corollary of Proposition \ref{prop_tmp 100} and Proposition \ref{prop_tmp 101} is:
\begin{cor}
Let $p>0$ be a real number. Suppose $K\in \mathcal{K}_0^n$ and $\phi:\R{n}\rightarrow \R{n}$ is an invertible linear map. Then
\begin{equation*}
\Omega_p(\phi K)=|\det(\phi)|^{\frac{n-p}{n+p}}\Omega_p(K).
\end{equation*}
\end{cor}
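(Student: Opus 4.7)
The plan is to write any invertible linear map $\phi$ as the composition of a dilation and a volume preserving linear map, and then apply the two previous propositions in succession. Specifically, set $\lambda = |\det(\phi)|^{1/n} > 0$ and $\psi = \lambda^{-1}\phi$. Then $|\det(\psi)| = \lambda^{-n}|\det(\phi)| = 1$, so $\psi$ is exactly the type of linear map to which Proposition \ref{prop_tmp 101} applies, and $\phi K = \lambda(\psi K)$ by construction.

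Next I would simply chain the two identities. By Proposition \ref{prop_tmp 100} applied to the convex body $\psi K \in \mathcal{K}_0^n$ and scalar $\lambda > 0$,
\begin{equation*}
\Omega_p(\phi K) = \Omega_p(\lambda \psi K) = \lambda^{\frac{n(n-p)}{n+p}}\Omega_p(\psi K).
\end{equation*}
Since $|\det(\psi)| = 1$, Proposition \ref{prop_tmp 101} yields $\Omega_p(\psi K) = \Omega_p(K)$. Substituting $\lambda^n = |\det(\phi)|$ then gives $\lambda^{n(n-p)/(n+p)} = |\det(\phi)|^{(n-p)/(n+p)}$, which is the claimed factor.

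There is no real obstacle here: once the factorization $\phi = \lambda \psi$ is in place, the result is a one-line combination of the two propositions. One should only check that $\psi K$ still contains the origin in its interior (immediate, since $\psi$ is an invertible linear map fixing the origin and $K \in \mathcal{K}_0^n$), so that Proposition \ref{prop_tmp 100} is legitimately applicable.
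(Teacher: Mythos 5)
Your proof is correct and is exactly the argument the paper has in mind: the paper states the corollary as an ``immediate'' consequence of Propositions \ref{prop_tmp 100} and \ref{prop_tmp 101}, and the factorization $\phi=\lambda\psi$ with $\lambda=|\det(\phi)|^{1/n}$ and $|\det(\psi)|=1$ is the standard way to combine them. Nothing is missing.
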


The $L_p$ affine surface area functional is also a valuation. That is, if $K,L\in \mathcal{K}_0^n$ are such that $K\cup L\in \mathcal{K}_0^n$, then
\begin{equation*}
\Omega_p(K\cap L)+\Omega_p(K\cup L)=\Omega_p(K)+\Omega_p(L).
\end{equation*}
This property, however, is not immediate under the new form \eqref{eq_new_9}. The reader is recommended to see e.g., Sch\"{u}tt \cite{schutt1993}, Ludwig \& Reitzner \cite{ludwig,MR2680490} (and the references therein) for a proof of the valuation property of $L_p$ affine surface area and the role of $L_p$ affine surface area in the theory of valuation.

For each $g\in T_1$, denote
\begin{equation*}
\begin{aligned}
V_p(K,g)&=\frac{1}{n}\int_{\partial K}g^p(x)(h_K(\nu_K(x)))^{1-p}d\mathcal{H}^{n-1}(x),\\
W(K,g)&=\frac{1}{n}\int_{\partial K}g^{-n}(x)H_K(x)d\mathcal{H}^{n-1}(x).
\end{aligned}
\end{equation*}
Notice that for a convex body $L$ that contains the origin in its interior,
\begin{equation*}
V_p(K,h_L\circ\nu_K)=V_p(K,L).
\end{equation*}
Here $V_p(K,L)$ is the $L_p$ mixed volume of $K$ and $L$ and can be defined by
\begin{equation*}
V_p(K,L)=\frac{1}{n}\int_{S^{n-1}}h_L^p(u)h_K^{1-p}(u)dS_K(u).
\end{equation*}

Built in \eqref{eq_new_14} is the following $L_p$ mixed volume type inequality,
\begin{equation}
\label{eq_new_15}
\frac{1}{n}\Omega_p(K)\leq W(K,h_L\circ\nu_K)^{\frac{p}{n+p}}V_p(K,L)^{\frac{n}{n+p}}.
\end{equation}

We will postpone the proof of the $L_p$ affine isoperimetric inequality (the $L_p$ analogue of the celebrated affine isoperimetric inequality \eqref{star}, which was given by Lutwak in \cite{lutwak1996} for $p\geq1$, and by Werner \& Ye in \cite{MR2414321} for all other $p$) to Section \ref{sec_equivalences}. The proof will utilize \eqref{eq_new_15}.

\section{Lipschitz Property of Restrictions of $\tau_K$}
\label{sec_lip}
Recall that $\nu_K$ is the Gauss map defined on $\reg K$, the set of regular boundary points of $K$, and $\tau_K$ is the inverse Gauss map defined on $\regn K$, the set of regular normal vectors of $K$. See Section \ref{section_preli}.

One of the essential difficulties people encounter when trying to prove the equivalence between Definitions \eqref{eq_new_7} and \eqref{eq_new_8}, as well as Definitions \eqref{eq_new_6} and \eqref{eq_new_9}, is linking an integral over the unit sphere (image of the Gauss map or domain of the inverse Gauss map) with an integral over the boundary of a convex body (domain of the Gauss map or image of the inverse Gauss map). One of the direct bridges was built in \cite{daniel1996} by exploring the Lipschitz property of restrictions of $\nu_K$. 

In particular, for each $r>0$ and each convex body $K\subset \R{n}$, denote 
\begin{equation*}
(\partial K)_r=\left\{x\in \partial K: \exists a\in S^{n-1} \text{ such that } B(x-ra,r)\subset K\right\}.
\end{equation*}

The following lemma was shown in \cite{MR0367810}:

\begin{lemma}
\label{Lemma 4.1}
For each convex body $K\subset \R{n}$, $$\mathcal{H}^{n-1}(\partial K\setminus(\cup_{r>0}(\partial K)_r)=0.$$
\end{lemma}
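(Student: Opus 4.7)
The plan is to reduce this to Alexandrov's theorem \eqref{sharp} by establishing the pointwise inclusion $\nor K\subset \bigcup_{r>0}(\partial K)_r$. Since \eqref{sharp} already tells us $\mathcal{H}^{n-1}(\partial K\setminus \nor K)=0$, the measure statement will follow immediately. So the real content is: at every normal boundary point $x_0$, produce some positive $r$ and a ball of radius $r$ that sits inside $K$ and touches $\partial K$ at $x_0$.

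For this I would work in the local representation \eqref{eq_local 31}--\eqref{eq_normal representation}. Fix $x_0\in \nor K$, let $u_0=\nu_K(x_0)$, and let $f$ be the local convex function whose graph describes $\partial K$ near $x_0$, with $f(o)=0$, $f\geq 0$, and a second-order expansion whose principal curvatures $\kappa_i(x_0)$ I bound above by some $M\geq 0$. The expansion yields $\delta\in(0,\varepsilon)$ with $f(y)\leq \tfrac{M+1}{2}\vectornorm{y}^2$ whenever $\vectornorm{y}\leq \delta$. I would then pick $r<\min\bigl(\delta,\,1/(M+1)\bigr)$ small enough that the candidate ball $B:=B(x_0-r u_0,\,r)$ lies entirely inside the local neighborhood $U(x_0)$ and has its center in $\mathrm{int}(K)$; the latter is automatic for small $r$, since the inward normal ray at a regular boundary point enters the interior. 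The elementary estimate $r-\sqrt{r^2-\vectornorm{y}^2}\geq \vectornorm{y}^2/(2r)$, combined with $f(y)\leq \tfrac{M+1}{2}\vectornorm{y}^2 < \vectornorm{y}^2/(2r)$ for $y\neq o$, will then show that in local coordinates the graph $z=-f(y)$ lies strictly above $B$ except at the tangency point $x_0$, so that $B\cap \partial K=\{x_0\}$.

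The step I expect to demand the most care is the passage from this local statement to the global containment $B\subset K$. My route is a short connectedness argument: $B\setminus\{x_0\}$ is path-connected, it is disjoint from $\partial K$ by the previous step, and it contains the center of $B$, which lies in $\mathrm{int}(K)$. Thus the open sets $B\cap \mathrm{int}(K)$ and $B\cap K^c$ partition $B\setminus\{x_0\}$ into opens of which only the first is non-empty, so $B\setminus\{x_0\}\subset \mathrm{int}(K)$ and hence $B\subset K$. This places $x_0$ in $(\partial K)_r$ and completes the inclusion $\nor K\subset \bigcup_{r>0}(\partial K)_r$. The principal subtlety to watch is that $r$ must be shrunk so that $B\subset U(x_0)$, ensuring the local graph description of $\partial K$ is valid throughout $B$ and the connectedness argument applies globally rather than only in a neighborhood.
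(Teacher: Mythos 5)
Your argument is correct, but it is not the paper's route: the paper offers no proof of Lemma \ref{Lemma 4.1} at all, citing instead McMullen's result on inner parallel bodies \cite{MR0367810}. What you do is reduce the statement to Alexandrov's theorem \eqref{sharp} by proving the pointwise inclusion $\nor K\subset\bigcup_{r>0}(\partial K)_r$: at a normal point the generalized principal curvatures in \eqref{eq_normal representation} are finite, so $f(y)\leq\tfrac{M+1}{2}\vectornorm{y}^2$ on a small disc, and the comparison $f(y)<\vectornorm{y}^2/(2r)\leq r-\sqrt{r^2-\vectornorm{y}^2}$ for $r<\min(\delta,1/(M+1))$ shows that no boundary point inside $U(x_0)$ meets the open ball $B(x_0-ru_0,r)$; your connectedness argument then upgrades this local statement to $B(x_0-ru_0,r)\subset K$. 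This checks out, and it is in fact exactly the boundary-side ``dual'' of what the paper does on the sphere side, where Lemma \ref{lemma_a.e.} (second order differentiability of $h_K$ gives an enclosing ball) plus \eqref{hkdifferentiable} yields Corollary \ref{corollary_a.e.}; your lemma is the analogous ``interior ball'' statement at points of $\nor K$, and it is also the easy half of the circle of ideas behind Lemma \ref{Lemma 4.3}. What each approach buys: the citation of \cite{MR0367810} is shorter and rests on an argument about inner parallel bodies that does not need Alexandrov's theorem, while your proof is self-contained modulo \eqref{sharp} (which the paper already uses) and gives the slightly stronger pointwise information that every normal boundary point admits an interior tangent ball. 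Two small points to tidy: since $B$ is the open ball, $x_0\notin B$ and you actually get $B\cap\partial K=\emptyset$ rather than $\{x_0\}$, which only simplifies the connectedness step; and the assertion that the inward normal ray at a regular boundary point enters $\mathrm{int}(K)$ (legitimate here because normal points are by definition regular) deserves a one-line justification, e.g.\ if $x_0-s_ju_0\notin\mathrm{int}(K)$ for some $s_j\downarrow 0$, separating hyperplanes give unit vectors $v_j$ with $\langle v_j,u_0\rangle\leq 0$ and $h_K(v_j)\leq\langle v_j,x_0-s_ju_0\rangle$, and any limit $v$ of the $v_j$ is an outer normal of $K$ at $x_0$ different from $u_0$, contradicting regularity; alternatively, any segment from $x_0$ to a fixed interior point enters $B(x_0-ru_0,r)\cap\mathrm{int}(K)$, which is all your partition argument needs.
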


Denote $\nu_K|_{(\partial K)_r}$ by $\nu_r$. Hug in \cite[Lemmas 2.1, 2.3]{daniel1996} showed that $\nu_r$ is a Lipschitz map and calculated the approximate Jacobian of $\nu_r$:

\begin{lemma}
\label{Lemma 4.2}
Let $K\subset \R{n}$ be a convex body and $r>0$. The following results are true:
\begin{enumerate}
\item $(\partial K)_r$ is a closed subset of $\partial K$, and $\nu_r$ is Lipschitz;
\item For $\mathcal{H}^{n-1}$ almost all $x\in (\partial K)_r$, we have 
\begin{equation*}
(\mathcal{H}^{n-1}\mres {(\partial K)_r},n-1)\,\ap\,J\nu_r(x)=H_K(x).
\end{equation*}
\end{enumerate}
\end{lemma}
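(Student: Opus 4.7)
For part (1), I would treat closedness and the Lipschitz property separately. Closedness follows by a routine compactness argument: given $x_k \in (\partial K)_r$ with $x_k \to x \in \partial K$, extract corresponding inner normals $a_k \in S^{n-1}$ with $B(x_k - ra_k, r) \subset K$; compactness of $S^{n-1}$ yields a subsequence $a_k \to a$, and any $y \in B(x - ra, r)$ lies in $B(x_k - ra_k, r) \subset K$ for all sufficiently large $k$, so $B(x - ra, r) \subset K$ and $x \in (\partial K)_r$. For the Lipschitz estimate, set $u = \nu_r(x)$ and $v = \nu_r(y)$. The supporting half-space at $x$ with outer normal $u$ contains $K$, hence the interior ball $B(y - rv, r)$, which forces $\langle y - rv - x, u\rangle \leq -r$; symmetrically $\langle x - ru - y, v\rangle \leq -r$. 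Adding and simplifying yields
\begin{equation*}
\langle y - x, v - u\rangle \geq 2r\bigl(1 - \langle u, v\rangle\bigr) = r\vectornorm{u - v}^2,
\end{equation*}
and Cauchy--Schwarz then gives $\vectornorm{\nu_r(x) - \nu_r(y)} \leq r^{-1}\vectornorm{x - y}$.

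For part (2), I would exploit three facts that hold simultaneously at $\mathcal{H}^{n-1}$-almost every $x_0 \in (\partial K)_r$: (a) $x_0 \in \nor K$, by \eqref{sharp}; (b) the Lipschitz map $\nu_r$ is classically differentiable at $x_0$, by Rademacher's theorem; (c) $x_0$ is a density-one point of $(\partial K)_r$ in $\partial K$, so that $\Tan^{n-1}(\mathcal{H}^{n-1}\mres (\partial K)_r, x_0) = T_{x_0}K$. At such a point, the approximate differential of $\nu_r$ restricted to $T_{x_0}K$ coincides with the classical differential. Using the graph representation \eqref{eq_local 31} in the adapted basis from \eqref{eq_normal representation}, the outer normal at a nearby boundary point $x$ is the normalization of the vector whose tangential component is $\nabla f(\tilde{x} - \widetilde{x_0})$ and whose $u_0$-component is $-1$. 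Expanding via the Alexandrov development \eqref{eq_normal representation} gives
\begin{equation*}
\nu_K(x) = u_0 + \sum_{i=1}^{n-1}\kappa_i(x_0)(x^i - x_0^i)\,e_i + o(\vectornorm{\tilde{x} - \widetilde{x_0}}),
\end{equation*}
so the differential of $\nu_r$ at $x_0$, viewed as a linear map $T_{x_0}K \to u_0^\perp$, is represented in the basis $\{e_1, \ldots, e_{n-1}\}$ by $\mathrm{diag}(\kappa_1(x_0), \ldots, \kappa_{n-1}(x_0))$, whose Jacobian is $\kappa_1(x_0)\cdots\kappa_{n-1}(x_0) = H_K(x_0)$.

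The main obstacle I anticipate lies in the measure-theoretic assembly for (2): ensuring that the approximate differential of $\nu_r$ with respect to $\mathcal{H}^{n-1}\mres(\partial K)_r$ really coincides with the classical derivative, and that the approximate tangent cone equals $T_{x_0}K$, for almost every $x_0 \in (\partial K)_r$. Since $(\partial K)_r$ is a closed subset of the rectifiable surface $\partial K$, standard density theorems together with the characterization of approximate tangent planes of $(\mathcal{H}^{n-1},n-1)$ rectifiable sets (Theorem 3.2.19 in \cite{federer}) supply this, but the identification must be performed simultaneously with (a) and (b) so that all three properties hold at a common $\mathcal{H}^{n-1}$-full set of points. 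Once this bookkeeping is done, the expansion of $\nu_K(x) - u_0$ derived from \eqref{eq_normal representation} closes the computation.
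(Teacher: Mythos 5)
The paper does not actually prove this lemma---it is imported verbatim from Hug \cite[Lemmas 2.1, 2.3]{daniel1996}---so there is no in-paper argument to compare against; your reconstruction is essentially the standard proof and is correct. Part (1) is fine: the compactness argument for closedness works, and the two half-space inequalities added together give exactly $\langle y-x,\,v-u\rangle\geq r\vectornorm{u-v}^2$ and hence the Lipschitz constant $r^{-1}$. (One small implicit step worth making explicit: the interior ball $B(y-ra,r)\subset K$ forces $y$ to be a \emph{regular} boundary point with $\nu_K(y)=a$, so $\nu_r$ is well defined on $(\partial K)_r$ and the normal you feed into the half-space inequalities is the one from the definition of $(\partial K)_r$.)

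In part (2) there are two blemishes, neither fatal. First, a sign: with the convention \eqref{eq_local 31}, the outer normal at $x$ is the normalization of $\nabla f(\tilde{x}-\widetilde{x_0})+u_0$, so its $u_0$-component is $+1$ (equivalently, its $e_n$-component is $-1$ in the basis with $e_n=-u_0$); your displayed expansion of $\nu_K(x)$ is consistent with this, so only the sentence preceding it needs correcting. Second, the appeal to Rademacher is both not directly applicable ($\nu_r$ lives on a closed, non-open set, so ``classical differentiability'' requires an extension you have not made) and unnecessary: the generalized second-order differentiability of $f$ at $o$ gives, via the subgradient formulation quoted in Section \ref{section_preli}, $\nabla f(z)=Af(o)z+o(\vectornorm{z})$ for \emph{all} nearby points of differentiability, and since every point of $(\partial K)_r$ is regular this estimate holds at every nearby $x\in(\partial K)_r$---which is stronger than what the definition of approximate differentiability demands. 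Combined with your density-one identification of $\Tan^{n-1}(\mathcal{H}^{n-1}\mres(\partial K)_r,x_0)$ with $T_{x_0}K=u_0^{\perp}$, this closes the argument with $\zeta=\mathrm{diag}(\kappa_1,\ldots,\kappa_{n-1})$ and Jacobian $H_K(x_0)$. This is precisely the mechanism the paper itself uses for the dual statement about $\tau_i$ in Lemma \ref{Lemma_Jacobian}, so you should simply drop item (b) and run the computation from (a) and (c) alone.
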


The following characterization of points at which the generalized curvature function is positive was established in \cite[Lemma 2.7]{daniel1996}:

\begin{lemma}
\label{Lemma 4.3}
Let $K\subset\R{n}$ be a convex body. Suppose $u_0\in D^2(h_K)\cap S^{n-1}$. Then the following two conditions are equivalent:
\begin{enumerate}
\item There is some $r>0$ such that $B(\tau_K(u_0)-ru_0,r)\subset K$.
\item $F_K(u_0)>0$.
\end{enumerate}
\end{lemma}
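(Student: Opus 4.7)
The plan is to prove the two implications separately by working with the generalized second-order Taylor expansion of $h_K$ at $u_0$. Throughout write $x_0 = \tau_K(u_0) = \nabla h_K(u_0)$, let $\lambda$ denote the smallest eigenvalue of $Ah_K(u_0)|_{u_0^\perp}$, and set $g(u) = h_K(u) - \langle x_0, u\rangle$. Since $x_0 \in K$ and $x_0 = \nabla h_K(u_0)$, the function $g$ is nonnegative, $1$-homogeneous, and satisfies $g(u_0) = 0$ and $\nabla g(u_0) = 0$.

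For $(1)\Rightarrow(2)$, let $B = B(x_0 - ru_0, r)$, whose support function is $h_B(u) = \langle x_0 - ru_0, u\rangle + r|u|$. Containment $B \subset K$ is equivalent, for convex sets, to $h_B \leq h_K$ on $\R{n}$, and a direct substitution yields equality at $u_0$. Because $h_B$ is smooth at $u_0$ with gradient $x_0$ and Hessian $r(I - u_0 u_0^T)$, the difference $h_K - h_B$ is generalized second-order differentiable at $u_0$ with vanishing value and vanishing gradient. The standard pointwise-Taylor argument (insert $u = u_0 + tw$, divide by $t^2$, let $t \to 0$) forces its quadratic part to be positive semidefinite, giving $Ah_K(u_0)|_{u_0^\perp} \succeq rI$ and hence $F_K(u_0) \geq r^{n-1} > 0$.

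For $(2)\Rightarrow(1)$, assume $\lambda > 0$. The key preliminary step is to show that $u_0$ is the unique outer normal at $x_0$: if some $u_1 \neq u_0$ also satisfied $\langle x_0, u_1\rangle = h_K(u_1)$, then $x_0 \in K$ together with sublinearity of $h_K$ would force $h_K$ to be affine on the segment $[u_0, u_1]$. Feeding this into the Alexandrov expansion at $u_0$ yields $\langle Ah_K(u_0) w, w\rangle = 0$ for $w$ the projection of $u_1 - u_0$ onto $u_0^\perp$; positive definiteness forces $w = 0$, and the remaining case $u_1 = -u_0$ would place $K$ inside a hyperplane, contradicting nonempty interior. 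Rewriting $B(x_0 - ru_0, r)\subset K$ via support functions and using $1$-homogeneity reduces the goal to producing $r > 0$ such that $g(u) \geq r(1 - \cos\theta)$ for every $u = \cos\theta\, u_0 + \sin\theta\, v \in S^{n-1}$ with $v \in S^{n-1}\cap u_0^\perp$.

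To finish, I would analyze the quotient $\varphi(u) = g(u)/(1 - \cos\theta)$ on $S^{n-1}\setminus\{u_0\}$. The Alexandrov expansion at $u_0$, combined with $|u - u_0|^2 = 2(1-\cos\theta)$ and $\sin^2\theta = (1-\cos\theta)(1+\cos\theta)$, shows $\liminf_{u\to u_0}\varphi(u) \geq \lambda$; away from $u_0$, $\varphi$ is continuous with bounded positive denominator and strictly positive numerator (by uniqueness of the outer normal), hence bounded below on the complement of any neighborhood of $u_0$. Compactness of $S^{n-1}$ then gives $\inf\varphi > 0$, and any positive $r$ below this infimum realizes the desired ball. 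The main obstacle is precisely this global-to-local extension: the local second-order bound near $u_0$ fails to globalize unless one first rules out other zeros of $g$ on $S^{n-1}$, and it is exactly here that positive definiteness of $Ah_K(u_0)|_{u_0^\perp}$ (i.e., $F_K(u_0) > 0$, not merely $\geq 0$) plays its decisive role.
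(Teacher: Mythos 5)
Your argument is correct. Note first that the paper itself does not prove this lemma; it is quoted verbatim from Hug's Lemma~2.7 in \cite{daniel1996}, so there is no in-paper proof to compare against --- you have supplied a self-contained argument for a cited fact. Your proof is the natural ``dual'' one, working entirely with the support function: $(1)\Rightarrow(2)$ via comparison of $h_K$ with the smooth support function of the inscribed ball and the positive-semidefiniteness of the quadratic part of a nonnegative function vanishing to first order, and $(2)\Rightarrow(1)$ via the quotient $g(u)/(1-\langle u,u_0\rangle)$ on the sphere. All the delicate points check out: $\tau_K(u_0)=\nabla h_K(u_0)$ is legitimate because $u_0\in D^2(h_K)$ implies $u_0$ is a regular normal vector; the cross terms in $\langle Ah_K(u_0)(u-u_0),u-u_0\rangle$ vanish because $u_0$ is a $0$-eigenvector of $Ah_K(u_0)$ and $u_0^\perp$ is invariant, which is what makes the limit $\liminf_{u\to u_0}\varphi(u)\geq\lambda$ come out right after the substitution $\vectornorm{u-u_0}^2=2(1-\cos\theta)$; and you correctly identify that the compactness step away from $u_0$ hinges on ruling out a second outer normal at $x_0$, which you do by forcing $h_K$ to be linear on $[u_0,u_1]$ and then exploiting strict positive definiteness on $u_0^\perp$ (excluding $u_1=-u_0$ by the nonempty interior). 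This is essentially the same support-function route Hug takes, phrased slightly differently; the only stylistic remark is that the uniqueness-of-the-outer-normal step could alternatively be absorbed into the compactness argument by observing that $g$ is positive on $S^{n-1}\setminus\{u_0\}$ precisely because any other zero would be another outer normal, which is exactly what you prove.
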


In this section, the Lipschitz property of restrictions of $\tau_K$ will be discussed, which will be useful in proving the equivalence between Definitions \eqref{eq_new_6} and \eqref{eq_new_9}. In particular, we will divide the unit sphere into a countable collection of subsets (up to a set of measure 0), such that $\tau_K$ restricted to each subset is Lipschitz. Thus a change of variable will be made possible by using Federer's coarea formula.

Let $K\subset \R{n}$ be a convex body. For each $i\in \mathbb{N}^+$, we define $A_i\subset S^{n-1}$ by
\begin{equation*}
A_i=\{u\in S^{n-1}:\exists x\in \tau(K,\{u\})\text{ such that }K\subset B[x-iu,i]\}.
\end{equation*}
Here $\tau(K,\{u\})$ is the inverse spherical image of $\{u\}$ (see Section \ref{section_preli}).
\begin{remark}
Note that it is easily seen that if $u\in A_i$, then $u$ must be a regular normal vector of $K$. Hence,
\begin{equation}
A_i=\{u\in \regn K:K\subset B[x-iu,i], \text{where }x=\tau_K(u)\}.
\end{equation}
\end{remark}

We will denote the restriction of $\tau_K$ to $A_i$ by $\tau_i$, for each $i\in \mathbb{N}^+$.

The following lemma was observed by Hug in \cite{daniel1996}, which can be proved similarly to Lemma 2.7 in \cite{daniel1996}.
\begin{lemma}
\label{lemma_a.e.}
Let $K\subset \R{n}$ be a convex body. For each $u_0\in D^2(h_K)\cap S^{n-1}$, there exists $i\in \mathbb{N}^+$ such that
\begin{equation}
\label{tmp2}
K\subset B[x_0-iu_0,i],
\end{equation}
where $x_0=\tau_K(u_0)$. 
\end{lemma}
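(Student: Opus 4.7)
The plan is to translate the containment $K\subset B[x_0-iu_0,i]$ into an inequality between support functions and then exploit the quadratic behaviour of $h_K$ guaranteed by its second-order differentiability at $u_0$. The support function of $B[x_0-iu_0,i]$ restricted to $S^{n-1}$ equals
$$h_{B[x_0-iu_0,i]}(u)=\langle x_0,u\rangle+i(1-\langle u_0,u\rangle)=\langle x_0,u\rangle+\tfrac{i}{2}\vectornorm{u-u_0}^2,$$
using the identity $1-\langle u_0,u\rangle=\tfrac{1}{2}\vectornorm{u-u_0}^2$ for $u\in S^{n-1}$. Consequently $K\subset B[x_0-iu_0,i]$ is equivalent to
$$\phi(u):=h_K(u)-\langle x_0,u\rangle\leq \tfrac{i}{2}\vectornorm{u-u_0}^2\qquad \text{for every }u\in S^{n-1}.$$
Since $x_0\in K$, we have $\phi\geq 0$ on $S^{n-1}$; and since $u_0\in D^2(h_K)$ forces $h_K$ to be differentiable at $u_0$ with $\nabla h_K(u_0)=\tau_K(u_0)=x_0$, we also have $\phi(u_0)=0$.

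I would then split $S^{n-1}$ into a small spherical cap about $u_0$ and its complement, and bound $\phi$ by a multiple of $\vectornorm{u-u_0}^2$ on each. For the local piece, the generalised second-order expansion of $h_K$ at $u_0$ gives
$$\phi(u)=\tfrac{1}{2}\langle Ah_K(u_0)(u-u_0),u-u_0\rangle+o(\vectornorm{u-u_0}^2),$$
so there exist $r_0>0$ and a constant $C_1>\tfrac{1}{2}\vectornorm{Ah_K(u_0)}$ with $\phi(u)\leq C_1\vectornorm{u-u_0}^2$ whenever $u\in S^{n-1}$ and $\vectornorm{u-u_0}\leq r_0$. For the complementary piece $S^{n-1}\setminus B(u_0,r_0)$, compactness and continuity yield $M:=\max_{S^{n-1}}\phi<\infty$, and since $\vectornorm{u-u_0}^2\geq r_0^2$ there, we obtain $\phi(u)\leq (M/r_0^2)\vectornorm{u-u_0}^2$ uniformly on this set. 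Any positive integer $i\geq 2\max(C_1,M/r_0^2)$ then satisfies the required inequality everywhere on $S^{n-1}$ and therefore realises \eqref{tmp2}.

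The only conceptually delicate step is passing from the pointwise second-order expansion of $h_K$ to a uniform quadratic majorant on a whole neighbourhood of $u_0$; but this is immediate, because the remainder term is genuinely $o(\vectornorm{u-u_0}^2)$ as $u\to u_0$ and the quadratic form $\tfrac{1}{2}\langle Ah_K(u_0)\cdot,\cdot\rangle$ is uniformly controlled by its operator norm. Everything else---the support function characterisation of convex-body inclusion, the spherical identity $1-\langle u,u_0\rangle=\tfrac{1}{2}\vectornorm{u-u_0}^2$, the nonnegativity of $\phi$, and the compactness bound outside the cap---is routine, so no serious obstacle is anticipated.
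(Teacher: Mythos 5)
Your proof is correct and complete: translating the inclusion $K\subset B[x_0-iu_0,i]$ into the support-function inequality $h_K(u)-\langle x_0,u\rangle\leq \tfrac{i}{2}\vectornorm{u-u_0}^2$ and then combining the generalized second-order expansion of $h_K$ at $u_0$ (which gives the quadratic bound near $u_0$, using $\nabla h_K(u_0)=x_0$) with compactness away from $u_0$ is exactly the argument the paper leaves to the reader by pointing to Hug's Lemma 2.7. Since the paper gives no proof of its own, your write-up simply supplies the omitted details, and I see no gap in it.
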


The following corollary follows immediately from Lemma \ref{lemma_a.e.} and \eqref{hkdifferentiable}.
\begin{cor}
\label{corollary_a.e.}
Let $K\subset \R{n}$ be a convex body. With respect to $\mathcal{H}^{n-1}$, almost every point in $S^{n-1}$ is contained in $\cup_{i=1}^{\infty}A_i$, i.e., 
\begin{equation*}
\mathcal{H}^{n-1}(S^{n-1}\setminus \cup_{i=1}^{\infty}A_i)=0.
\end{equation*}
\end{cor}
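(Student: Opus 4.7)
The plan is to show directly that $D^2(h_K)\cap S^{n-1}\subset \bigcup_{i=1}^{\infty}A_i$ and then invoke \eqref{hkdifferentiable}. The corollary then follows by monotonicity of $\mathcal{H}^{n-1}$ together with the fact that the complement of the right-hand side is contained in the complement of $D^2(h_K)\cap S^{n-1}$, which has measure zero.

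Concretely, I would first fix an arbitrary $u_0\in D^2(h_K)\cap S^{n-1}$. Since $h_K$ is second order differentiable at $u_0$ in the generalized sense, in particular $h_K$ is differentiable at $u_0$ in the classical sense, and so by the discussion of the support function in Section \ref{section_preli} (specifically that $h_K$ is differentiable at $u\in S^{n-1}$ precisely when $u\in \regn K$, with $\nabla h_K(u)=\tau_K(u)$), we have $u_0\in \regn K$. Hence $\tau_K(u_0)$ is well defined; call it $x_0$. Applying Lemma \ref{lemma_a.e.} to $u_0$ produces some $i\in\mathbb{N}^+$ with $K\subset B[x_0-iu_0,i]$. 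Combined with $u_0\in\regn K$, the reformulation of $A_i$ given in the Remark immediately after its definition shows $u_0\in A_i$.

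Therefore $D^2(h_K)\cap S^{n-1}\subset \bigcup_{i=1}^{\infty}A_i$, so
\begin{equation*}
S^{n-1}\setminus \bigcup_{i=1}^{\infty}A_i \subset S^{n-1}\setminus D^2(h_K),
\end{equation*}
and by \eqref{hkdifferentiable} the right-hand side has $\mathcal{H}^{n-1}$ measure zero, finishing the argument. There is no real obstacle here: the corollary is essentially a packaging of Lemma \ref{lemma_a.e.} together with the Alexandrov-type fact \eqref{hkdifferentiable}, and the only small check is that membership in $D^2(h_K)$ guarantees $u_0$ is a regular normal vector so that $\tau_K(u_0)$ is unambiguously defined.
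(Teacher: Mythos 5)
Your proof is correct and follows the same route as the paper, which simply notes that the corollary is immediate from Lemma \ref{lemma_a.e.} together with \eqref{hkdifferentiable}; you have merely spelled out the (valid) small check that $u_0\in D^2(h_K)$ forces $u_0\in\regn K$ so that $\tau_K(u_0)$ is well defined.
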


The following lemma will be needed.

\begin{lemma}
\label{lemma_closedness}
Let $K\subset \R{n}$ be a convex body. For each $i\in \mathbb{N}^+$, $A_i$ is closed.
\end{lemma}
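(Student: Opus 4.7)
The plan is to prove closedness by a direct compactness argument: take a convergent sequence from $A_i$ and show that its limit is still in $A_i$. Let $\{u_k\}\subset A_i$ be a sequence converging to some $u_0\in S^{n-1}$. For each $k$, by definition of $A_i$, there exists $x_k\in\tau(K,\{u_k\})$ with $K\subset B[x_k-iu_k,i]$. I want to produce an $x_0$ playing the same role for $u_0$.

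First I would extract a convergent subsequence. Since each $x_k\in\partial K$ and $\partial K$ is compact, there is a subsequence (still denoted $\{x_k\}$) with $x_k\to x_0\in\partial K$. Next I would verify that $x_0\in\tau(K,\{u_0\})$. This is the standard closedness of the normal bundle: because $x_k\in\tau(K,\{u_k\})$ means $\langle x_k,u_k\rangle=h_K(u_k)$, letting $k\to\infty$ and using continuity of $h_K$ together with the continuity of the inner product yields $\langle x_0,u_0\rangle=h_K(u_0)$, so $u_0$ is an outer unit normal of $K$ at $x_0$, i.e.\ $x_0\in\tau(K,\{u_0\})$.

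Finally I would pass to the limit in the ball inclusion. The centers $x_k-iu_k$ converge to $x_0-iu_0$ and the radius is fixed, so the balls $B[x_k-iu_k,i]$ converge in the Hausdorff metric to $B[x_0-iu_0,i]$. Since each $B[x_k-iu_k,i]\supset K$, taking the limit yields $K\subset B[x_0-iu_0,i]$. Combined with $x_0\in\tau(K,\{u_0\})$, this places $u_0\in A_i$, completing the proof.

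There is not really a serious obstacle here; the argument is pure compactness plus continuity of $h_K$. The only small point requiring care is the step identifying $x_0$ as an element of $\tau(K,\{u_0\})$, which is handled by writing the defining equation $\langle x_k,u_k\rangle=h_K(u_k)$ and passing to the limit, rather than appealing to continuity of the (set-valued) inverse spherical image directly.
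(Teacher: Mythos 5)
Your proof is correct and follows essentially the same route as the paper: extract a convergent subsequence of the witnesses $x_k\in\partial K$ by compactness, verify $x_0\in\tau(K,\{u_0\})$ by passing the supporting-hyperplane condition to the limit (the paper uses $\langle x,u_j\rangle\le\langle x_j,u_j\rangle$ for all $x\in K$, you use the equivalent $\langle x_k,u_k\rangle=h_K(u_k)$ with continuity of $h_K$), and then let $k\to\infty$ in the ball inclusion. No gaps.
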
 
\begin{proof}
Suppose $\{u_j\}_{j=1}^{\infty}$ is a convergent sequence in $A_i$. Denote by $u_0$ its limit. Let $x_j=\tau_K(u_j)$. Since $\partial K$ is compact, we can take a convergent subsequence of $\{x_j\}_{j=1}^{\infty}$. We denote the subsequence and the corresponding subsequence of $\{u_j\}_{j=1}^{\infty}$ again by $\{x_j\}_{j=1}^{\infty}$ and $\{u_j\}_{j=1}^{\infty}$. Denote by $x_0$ the limit of $\{x_j\}_{j=1}^{\infty}$. Let $P_0$ be the hyperplane that passes $x_0$ and has $u_0$ as its normal. Since $x_j=\tau_K(u_j)$,
\begin{equation*}
\langle x,u_j\rangle\leq \langle x_j,u_j\rangle,
\end{equation*}
for each $x\in K$. Let $j\rightarrow \infty$, we have 
\begin{equation*}
\langle x, u_0\rangle\leq \langle x_0,u_0\rangle,
\end{equation*}
for each $x\in K$. Hence $P_0$ is a supporting hyperplane of $K$ at $x_0$ and $x_0\in \tau(K,\{u_0\})$. Since $\{u_j\}_{j=1}^{\infty}\subset A_i$, we have $K\subset B[x_j-iu_j,i]$. Let $j\rightarrow \infty$. We have $K\subset B[x_0-iu_0,i]$, where $x_0\in \tau(K,\{u_0\})$. This implies that $u_0\in A_i$. Hence $A_i$ is closed.
\end{proof}

\begin{lemma}
\label{Lemma_tau lip}
For each $i\in \mathbb{N}^+$ and each convex body $K\subset \R{n}$, $\tau_i$ is Lipschitz.
\end{lemma}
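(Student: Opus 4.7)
The plan is to exploit the defining property of $A_i$: every $u \in A_i$ comes with an outer osculating ball of radius $i$ at $\tau_K(u)$. So if $u_1, u_2 \in A_i$ and $x_j = \tau_i(u_j) = \tau_K(u_j)$, then
\begin{equation*}
K \subset B[x_1 - iu_1, i] \quad \text{and} \quad K \subset B[x_2 - iu_2, i].
\end{equation*}
Since $x_2 \in \partial K \subset K$ and $x_1 \in \partial K \subset K$, this yields two simple quadratic inequalities
$\|x_2 - x_1 + iu_1\|^2 \leq i^2$ and $\|x_1 - x_2 + iu_2\|^2 \leq i^2$.

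After expanding these and cancelling $i^2$, the first becomes $\|x_1 - x_2\|^2 \leq 2i\langle x_1 - x_2, u_1\rangle$, and similarly the second becomes $\|x_1 - x_2\|^2 \leq 2i\langle x_2 - x_1, u_2\rangle$. Adding the two inequalities gives
\begin{equation*}
2\|x_1 - x_2\|^2 \leq 2i\,\langle x_1 - x_2, u_1 - u_2\rangle \leq 2i\,\|x_1 - x_2\|\,\|u_1 - u_2\|,
\end{equation*}
where the last step is Cauchy--Schwarz. Dividing by $\|x_1 - x_2\|$ (handling the trivial case $x_1 = x_2$ separately) shows $\|\tau_i(u_1) - \tau_i(u_2)\| \leq i\,\|u_1 - u_2\|$, so $\tau_i$ is Lipschitz with constant $i$.

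I do not foresee a real obstacle here; the lemma is essentially the ``polar'' counterpart to the Lipschitz estimate for $\nu_r$ in Lemma \ref{Lemma 4.2}, obtained by playing the role of the inner osculating ball against the outer osculating ball. The only point to be careful about is to make sure the geometric statement $K \subset B[x_j - iu_j, i]$ is used symmetrically in both directions, and to note that the argument requires no regularity beyond what is already built into the definition of $A_i$ (which, by the remark preceding the lemma, automatically places $A_i$ inside $\regn K$ so that $\tau_i$ is well-defined and single-valued).
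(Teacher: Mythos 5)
Your proof is correct, and it takes a genuinely different and substantially shorter route than the paper. The paper argues geometrically: it projects onto the two--dimensional plane spanned by $x_1$ and $x_2$ through the center of the enclosing ball $B[x_1-iu_1,i]$, compares the angle $\theta$ between $u_1,u_2$ with an auxiliary angle $\tilde\theta$, reads off the chord--length bound $\vectornorm{x_1-x_2}\leq 2i\sin\theta$ from a figure, and then must split into the cases $0<\theta<\delta_0$ (where it gets the constant $3i$) and $\delta_0\leq\theta\leq\pi$ (where it falls back on the boundedness of $K$), yielding an unspecified Lipschitz constant $M$. Your argument instead uses the two inclusions $K\subset B[x_1-iu_1,i]$ and $K\subset B[x_2-iu_2,i]$ symmetrically: plugging $x_2$ into the first and $x_1$ into the second, expanding the squares, adding, and applying Cauchy--Schwarz gives
\begin{equation*}
\vectornorm{x_1-x_2}^2\leq i\,\langle x_1-x_2,\,u_1-u_2\rangle\leq i\,\vectornorm{x_1-x_2}\,\vectornorm{u_1-u_2},
\end{equation*}
hence the explicit global constant $i$ with no case analysis and no figure. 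This is the standard monotonicity/co-coercivity computation (the same one showing that a convex function whose gradient has an $i$-enclosing-ball property has an $i$-Lipschitz gradient), and it is the natural ``polar'' mirror of Hug's estimate for $\nu_r$. The one point you rightly flag --- that $u\in A_i$ forces $u$ to be a regular normal vector so that $x_j=\tau_K(u_j)$ is the point realizing the inclusion --- is exactly the content of the remark preceding the lemma, so nothing is missing. Your proof could replace the paper's with a net gain in brevity and in the sharpness of the constant.
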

\begin{proof}
Let $u_1,u_2\in A_i$. Denote by $\theta\in [0,\pi]$ the angle formed by $u_1$ and $u_2$. Let $x_1=\tau_i(u_1)$, and $x_2=\tau_i(u_2)$. 

We first assume $0<\theta<\pi/2$. Suppose $x_1\neq x_2$. Otherwise, there is nothing to prove. Since $u_1,u_2$ are not parallel to each other, the points $x_1-iu_1$ and $x_2-iu_2$ cannot both lie on the line passing $x_1,x_2$. Suppose $x_1-iu_1$ does not lie on the line passing $x_1,x_2$. Denote $x_1-iu_1$ by $C$. Without loss of generality, since this lemma is invariant under translation, we may assume that $C$ is the origin. Let $P$ be the two dimensional subspace spanned by $x_1$ and $x_2$. Note that $u_1$ is parallel to $x_1$. Write 
\begin{equation}
\label{100}
u_2=v_2+w_2,
\end{equation}
where $v_2\in P$ and $w_2\in P^\perp$. Since $u_1$ and $u_2$ are not perpendicular, we have $v_2\neq 0$. Let $\tilde{u_2}=\frac{v_2}{\vectornorm{v_2}}\in P$. Denote by $\tilde{\theta}\in[0,\pi]$ the angle formed by $u_1$ and $\tilde{u_2}$. Notice that by the definition of $\tilde{u_2}$, \eqref{100}, and the fact that $u_1\in P$, 
\begin{equation}
\label{tmp100}
\cos \tilde{\theta}=\langle\tilde{u_2},u_1\rangle=\frac{1}{\vectornorm{v_2}}\langle v_2, u_1\rangle=\frac{1}{\vectornorm{v_2}}\langle u_2, u_1\rangle=\frac{1}{\vectornorm{v_2}}\cos \theta.
\end{equation}
This implies that if $0<\theta<\pi/2$, we have $0\leq \tilde{\theta}<\pi/2$. In this case, by \eqref{tmp100} and that $\vectornorm{v_2}\leq 1$, we have $\cos\tilde{\theta}\geq \cos\theta$. By the monotonicity of the cosine function on $[0,\pi/2)$, we have 
\begin{equation}
\label{tmp300}
\tilde{\theta}\leq \theta,
\end{equation}
when $0< \theta<\pi/2$.

By the definition of $A_i$, the fact that $u_1\in A_i$ implies that $P\cap K$ is a subset of the disc $P\cap B[x_1-iu_1,i]$. Note that $P\cap K$ is non-empty and is either the line segment joining $x_1$ and $x_2$ or a convex body in $P$. In either case, for any $x\in P\cap K$, by the definition of $\tilde{u_2}$, \eqref{100}, the fact that $x_2=\tau_i(u_2)$, \eqref{100}, and the definition of $\tilde{u_2}$ once again,
\begin{equation}
\label{tmp200}
\langle \tilde{u_2},x\rangle=\frac{1}{\vectornorm{v_2}}\langle v_2,x\rangle=\frac{1}{\vectornorm{v_2}}\langle u_2,x\rangle \leq \frac{1}{\vectornorm{v_2}}\langle u_2,x_2\rangle=\frac{1}{\vectornorm{v_2}}\langle v_2,x_2\rangle=\langle\tilde{u_2},x_2\rangle.
\end{equation}

Now, we show that $\tilde{\theta}\neq 0$ as a result of $x_1\neq x_2$. If otherwise, $\tilde{u_2}=u_1$. Since $P\cap K\subset P\cap B[x_1-iu_1,i]$, the line $l_{x_1}\subset P$ passing $x_1$ and perpendicular to $u_1$ can only intersect $P\cap K$ at $x_1$. Note also that since $x_1=\tau_i(u_1)$, we have $\langle u_1,x\rangle \leq \langle u_1,x_1\rangle$ for each $x\in P\cap K$. But \eqref{tmp200} implies that $\langle u_1,x_1\rangle =\langle \tilde{u_2},x_1\rangle \leq \langle \tilde{u_2}, x_2\rangle =\langle u_1,x_2\rangle$. Hence $\langle u_1, x_2\rangle =\langle u_1,x_1\rangle $ and as a result, we have $x_2\in l_{x_1}$. This immediately implies $x_1=x_2$, which contradicts with our assumption. Thus, it suffices to look at the case when $0<\tilde{\theta}\leq \theta<\pi/2$, with $x_1\neq x_2$. In this case, let $l\subset P$ be the line passing through $x_2$ and is perpendicular to $\tilde{u_2}$. Extend $Cx_1$ so that it intersects $l$ at $D$. Starting from $C$, make an ray in the direction of $\tilde{u_2}$ (perpendicular to $l$ as a result), so that it crosses the boundary of $B[x_1-iu_1,i]$ at $E$. Let $\phi,\psi$ be the angles indicated in Figure \ref{fig:1}. By \eqref{tmp200} and that $0<\tilde{\theta}<\pi/2$, the point $D$ does not belong to the interior of $B(x_1-iu_1,i)$. Hence $\phi\geq \psi=\pi/2-\tilde{\theta}$. 
\begin{figure}{
\centering
{% Use the relevant command to insert your figure file.
% For example, with the graphicx package use
 \includegraphics{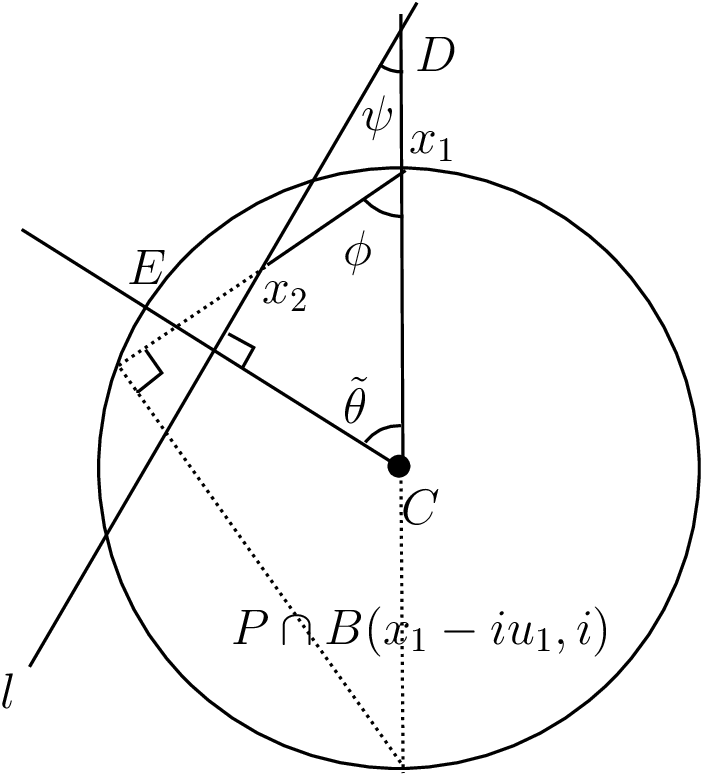}
% figure caption is below the figure
\caption{}
\label{fig:1}}       % Give a unique label
}\end{figure}
The fact that $x_2\in B[x_1-iu_1,i]$, together with \eqref{tmp300}, implies
\begin{equation}
\vectornorm{x_1-x_2}\leq 2i\cos \phi\leq 2i \cos (\pi/2-\tilde{\theta})=2i\sin\tilde{\theta}\leq 2i\sin \theta.
\end{equation}
Observe that $\vectornorm{u_1-u_2}=2\sin \frac{\theta}{2}$. Since 
\begin{equation*}
\lim_{\theta \rightarrow 0}\frac{2i\sin \theta}{2\sin \frac{\theta}{2}}=2i,
\end{equation*}
we conclude that there exists $0<\delta_0<\pi/2$, such that
\begin{equation}
\label{tmp400}
\vectornorm{x_1-x_2}\leq 3i\vectornorm{u_1-u_2},
\end{equation}
for any $u_1,u_2\in A_i$, satisfying $0<\theta<\delta_0$. (Note that \eqref{tmp400} trivially holds if $x_1=x_2$.)

For the case $\delta_0\leq \theta\leq\pi$, since $\vectornorm{u_1-u_2}=2\sin\frac{\theta}{2}$ and $K$ is bounded, we have
\begin{equation*}
\frac{\vectornorm{x_1-x_2}}{\vectornorm{u_1-u_2}}
\end{equation*}
is bounded from above. This and \eqref{tmp400} prove the existence of $M>0$ such that
\begin{equation*}
\vectornorm{x_1-x_2}\leq M\vectornorm{u_1-u_2},
\end{equation*}
for any $u_1,u_2\in A_i$. Hence $\tau_i$ is Lipschitz.
\end{proof}

The following characterization of normal boundary points with positive curvature was shown in~\cite{daniel1996} as Corollary 3.2 (see also \cite{leichtweiss1998}).

\begin{lemma}
\label{lemma_H>0}
Let $K\subset\R{n}$ be a convex body and $x_0$ be a normal boundary point. The following two conditions are equivalent:
\begin{enumerate}
\item \label{enumerate_1}$H_K(x_0)>0$;
\item \label{enumerate_2}there exists $i \in \mathbb{N}^+$ such that $K\subset B[x_0-iu_0,i]$ where $u_0=\nu_K(x_0)$.
\end{enumerate}
\end{lemma}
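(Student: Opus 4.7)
The strategy is to work in the local graph chart of $\partial K$ near $x_0$ guaranteed by $x_0 \in \nor K$: in the principal basis of \eqref{eq_normal representation} the boundary is locally $y = x_0 + t - f(t)u_0$ for $t$ in a neighborhood of $0$ in $T_{x_0}K$, with $f \geq 0$, $f(0) = 0$, and $f(t) = \tfrac{1}{2}(\kappa_1 t_1^2 + \cdots + \kappa_{n-1} t_{n-1}^2) + o(\|t\|^2)$. The elementary identity $\|y - (x_0 - iu_0)\|^2 - i^2 = \|y - x_0\|^2 + 2i\langle y - x_0, u_0\rangle$ shows that the inclusion $K \subset B[x_0 - iu_0, i]$ is equivalent to $\|y - x_0\|^2 \leq -2i\langle y - x_0, u_0\rangle$ for all $y \in K$, that is, to the ``quadratic ratio'' $\phi(y) := \|y - x_0\|^2 / (-2\langle y - x_0, u_0\rangle)$ being bounded above by $i$ on $K \setminus \{x_0\}$. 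Both directions then reduce to a comparison between $f$ and the paraboloid $\|t\|^2/(2i)$.

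\textbf{Direction (\ref{enumerate_2}) $\Rightarrow$ (\ref{enumerate_1}).} Plugging $y = x_0 + t - f(t)u_0$ into the reformulated inequality, using $\|y - x_0\|^2 = \|t\|^2 + f(t)^2$ and $\langle y - x_0, u_0\rangle = -f(t)$, gives $\|t\|^2 + f(t)^2 \leq 2if(t)$ and in particular $f(t) \geq \|t\|^2/(2i)$ for small $t$. Testing along the $j$-th principal direction yields $\tfrac{1}{2}\kappa_j s^2 + o(s^2) \geq s^2/(2i)$, whence $\kappa_j \geq 1/i$ for every $j$. Therefore $H_K(x_0) = \prod_j \kappa_j \geq i^{-(n-1)} > 0$.

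\textbf{Direction (\ref{enumerate_1}) $\Rightarrow$ (\ref{enumerate_2}).} Assume $H_K(x_0) > 0$. Convexity and generalized second order differentiability force each $\kappa_j \geq 0$, so $\kappa_\ast := \min_j \kappa_j > 0$, and there is a neighborhood $W$ of $x_0$ inside the graph chart on which $f(t) \geq \tfrac{\kappa_\ast}{4}\|t\|^2$. I first claim that $x_0$ is the \emph{only} point of $K$ lying in the supporting hyperplane $P(K,u_0)$: otherwise convexity would force a nondegenerate segment $[x_0, y_0] \subset K \cap P(K, u_0) \subset \partial K$ whose points close to $x_0$ produce nonzero $t$ with $f(t) = 0$, contradicting the quadratic lower bound. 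Consequently $-\langle y - x_0, u_0\rangle > 0$ for every $y \in K \setminus \{x_0\}$, so $\phi$ is a positive continuous function on $K \setminus \{x_0\}$. On the compact set $K \setminus W$ the denominator of $\phi$ is bounded below, so $\phi$ is bounded there; on $W$,
\[
\phi(y) = \frac{\|t\|^2 + f(t)^2}{2f(t)} \leq \frac{\|t\|^2}{2f(t)} + \frac{f(t)}{2} \leq \frac{2}{\kappa_\ast} + \frac{f(t)}{2},
\]
which is also bounded. Taking any integer $i > \sup_{K \setminus \{x_0\}} \phi$ then yields $K \subset B[x_0 - iu_0, i]$. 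The main obstacle is the face-freeness step in this direction: ruling out a flat face of $K$ contained in $P(K, u_0)$ through $x_0$ is what makes the local information genuinely global, and it requires positivity of \emph{all} principal curvatures individually (available through the quadratic minorant on $f$), not merely of their product.
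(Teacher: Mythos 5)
The paper does not actually prove Lemma \ref{lemma_H>0}: it quotes the statement from Hug \cite{daniel1996} (Corollary 3.2), so there is no internal argument to compare yours against. Your blind proof is a correct, self-contained substitute, and it is the natural osculating-paraboloid argument: the algebraic reformulation that $K\subset B[x_0-iu_0,i]$ is equivalent to $\|y-x_0\|^2\leq -2i\langle y-x_0,u_0\rangle$ on $K$; the comparison $f(t)\geq \|t\|^2/(2i)$ in the chart \eqref{eq_local 31}, which via \eqref{eq_normal representation} gives $\kappa_j\geq 1/i$ and hence $H_K(x_0)>0$; and, conversely, the quadratic minorant $f(t)\geq (\kappa_*/4)\|t\|^2$, the exclusion of a nondegenerate segment in $K\cap P(K,u_0)$, and a compactness bound away from $x_0$. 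This is also the ``polar'' companion of the inner rolling-ball criterion for $F_K$ (Hug's Lemma 2.7, quoted here as Lemma \ref{Lemma 4.3}), so your route is in the same spirit as the cited source while being written directly on $\partial K$.

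One step needs a small repair. In the direction $H_K(x_0)>0\Rightarrow$ ball condition, your two-case bound for $\phi$ does not quite cover all of $K\setminus\{x_0\}$ as written: if $W$ is a spatial neighborhood of $x_0$ (so that $K\setminus W$ is compact), the identity $\phi(y)=\bigl(\|t\|^2+f(t)^2\bigr)/\bigl(2f(t)\bigr)$ applies only to boundary points of $K$ in $W$, and interior points of $K$ near $x_0$ fall into neither case; if instead $W$ is a neighborhood inside $\partial K$, then $K\setminus W$ contains interior points tending to $x_0$, where the denominator of $\phi$ is not bounded below. The fix is immediate and you should state it: since the closed ball is convex and $K$ is the convex hull of $\partial K$ (equivalently, the convex function $y\mapsto\|y-(x_0-iu_0)\|$ attains its maximum over $K$ at a boundary point), it suffices to bound $\phi$ on $\partial K\setminus\{x_0\}$, where your two cases apply verbatim; alternatively, an interior point near $x_0$ has the form $y=x_0+t-su_0$ with $s\geq f(t)$, giving $\phi(y)=(\|t\|^2+s^2)/(2s)\leq 2/\kappa_*+s/2$. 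With that adjustment the proof is complete. Your closing remark is harmless but unnecessary: convexity already forces every $\kappa_j\geq 0$, so positivity of the product $H_K(x_0)$ is the same as positivity of each $\kappa_j$ individually.
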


The following lemma was proved in~\cite[Lemma 2.5]{daniel1996}.

\begin{lemma}
\label{Lemma_reciporical of H and F}
Let $K\subset \R{n}$ be a convex body. Suppose $x_0$ is a normal boundary point and $u_0=\nu_K(x_0)\in D^2(h_K)$. Then $H_K(x_0)F_K(u_0)=1$.
\end{lemma}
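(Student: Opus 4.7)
The plan is to match two generalized second-order Taylor expansions, one for the local graph of $\partial K$ at $x_0$ and one for $h_K$ at $u_0$, through the correspondence between outer normals and support points, and observe that these expansions are inverse to one another on $u_0^\perp$.

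First, using the basis $\{e_1,\ldots,e_{n-1},e_n=-u_0\}$ from \eqref{eq_normal representation}, the local representation \eqref{eq_local 31} writes $\partial K$ near $x_0$ as the graph $\phi(y)=x_0+y-f(y)u_0$ of a convex function $f\colon u_0^\perp\cap B(o,\varepsilon)\to[0,\infty)$ with $f(o)=0$. The generalized Hessian $Af(o)$ restricted to $u_0^\perp$ is $M=\operatorname{diag}(\kappa_1(x_0),\ldots,\kappa_{n-1}(x_0))$, so $\det M=H_K(x_0)$. Setting $N:=Ah_K(u_0)$, the preliminaries furnish $N$ symmetric with $u_0^\perp$ invariant and $F_K(u_0)=\det(N|_{u_0^\perp})$. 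For each subgradient choice $\vartheta(y)\in\partial f(y)$ on a small ball about $o$, I form the candidate unit outer normal
\[
u(y)\ =\ \frac{u_0+\vartheta(y)}{\sqrt{1+\vectornorm{\vartheta(y)}^2}}\in S^{n-1}.
\]
The subgradient inequality for $f$ combined with the graph description of $\partial K$ near $x_0$ gives $\langle x,u(y)\rangle\leq\langle\phi(y),u(y)\rangle$ for $x$ on that local piece of $\partial K$. Because $\tau(K,\{u_0\})=\{x_0\}$, upper hemicontinuity of $u\mapsto\tau(K,\{u\})$ at $u_0$ confines $\tau(K,\{u(y)\})$ to that local neighborhood for $y$ small, so $\phi(y)\in\tau(K,\{u(y)\})=\partial h_K(u(y))$.

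Now I invoke the subgradient form of generalized second-order differentiability from the preliminaries. Since $\vartheta(o)=\nabla f(o)=o$, the expansion for $f$ at $o$ gives $\vartheta(y)=My+o(\vectornorm{y})$, whence
\[
u(y)-u_0\ =\ My+o(\vectornorm{y}).
\]
Applying the expansion for $h_K$ at $u_0$ to the subgradient choice $\Phi(u(y)):=\phi(y)$ (extended arbitrarily at other $u$ in a neighborhood of $u_0$), and using $\Phi(u_0)=\tau_K(u_0)=x_0$,
\[
\phi(y)-x_0\ =\ N(u(y)-u_0)+o(\vectornorm{u(y)-u_0})\ =\ NMy+o(\vectornorm{y}).
\]
The left side equals $y-f(y)u_0$ with $f(y)=O(\vectornorm{y}^2)$; the right side lies in $u_0^\perp$ because $My\in u_0^\perp$ and $N$ preserves $u_0^\perp$. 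Projecting on $u_0^\perp$ yields $y=(N|_{u_0^\perp})My+o(\vectornorm{y})$ for every $y$ near $o$ in $u_0^\perp$, so letting $y=te$ with $t\to 0^+$ along any unit $e\in u_0^\perp$ forces $(N|_{u_0^\perp})M=I_{u_0^\perp}$. Taking determinants yields $H_K(x_0)F_K(u_0)=\det M\cdot\det(N|_{u_0^\perp})=1$.

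The main technical hurdle I anticipate is the globalization step $\phi(y)\in\partial h_K(u(y))$: the subgradient inequality for $f$ is only a local comparison inside the graph neighborhood of $x_0$, and rejecting distant global maximizers of $\langle\cdot,u(y)\rangle$ on $K$ requires the hemicontinuity of the support-set map together with the compactness of $\partial K$. Once that is in hand, the proof reduces to matching the two Alexandrov-type expansions and projecting onto $u_0^\perp$.
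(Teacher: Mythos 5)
Your argument is correct, but note that the paper itself does not prove this lemma: it is quoted verbatim from Hug's Lemma 2.5 in \cite{daniel1996}, so there is no in-paper proof to compare against; what you supply is a self-contained argument in the spirit of that reference. You match the Alexandrov expansion of the graph function $f$ at $o$ (whose generalized Hessian on $u_0^\perp$ has determinant $H_K(x_0)$) with the Alexandrov expansion of $h_K$ at $u_0$ (whose generalized Hessian restricted to $u_0^\perp$ has determinant $F_K(u_0)$), linked through the normal map $u(y)=(u_0+\vartheta(y))/\sqrt{1+\vectornorm{\vartheta(y)}^2}$, and conclude that the two Hessians are mutually inverse on $u_0^\perp$; the sign conventions work out with the paper's choice $e_n=-u_0$ in \eqref{eq_normal representation}, and the final limiting step $y=te$, $t\to0^+$, is legitimate because the error terms are uniform in direction. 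Two points deserve an extra line each to be airtight. First, in the globalization step you flag: besides upper semicontinuity of $u\mapsto\tau(K,\{u\})$ at $u_0$ (which uses that $u_0\in D^2(h_K)$ forces $u_0\in\regn K$ with $\tau_K(u_0)=x_0$), you also need $\phi(y)\in K$ for small $y$, i.e.\ that the graph of $f$ is locally contained in $\partial K$ and not merely that $U(x_0)\cap\partial K$ is contained in the graph as \eqref{eq_local 31} literally states; this is the standard reading of the local representation at a regular boundary point and is easy to justify, but it should be said, since without it the inequality $\langle\phi(y),u(y)\rangle\leq h_K(u(y))$ has no source. Second, when you feed $\Phi(u(y)):=\phi(y)$ into the second-order expansion of $h_K$ at $u_0$, the extension elsewhere must still be a selection from $\partial h_K$ (any such selection works), or equivalently you invoke the estimate in its uniform-over-subgradients form; with that, $\phi(y)-x_0=Ah_K(u_0)(u(y)-u_0)+o(\vectornorm{y})$, and projecting onto $u_0^\perp$ gives $\bigl(Ah_K(u_0)|_{u_0^\perp}\bigr)Af(o)=I_{u_0^\perp}$, hence $H_K(x_0)F_K(u_0)=1$ as claimed.
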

There is still one piece missing that hinders us from applying Federer's coarea formula to $\tau_i$, namely, the Jacobian of $\tau_i$. Taking the Jacobian of $\tau_i$ in the classical sense is impossible since the classical Jacobian requires $\tau_i$ to be defined in an open set. Therefore we have to consider the approximate Jacobian of $\tau_i$ instead. 

\begin{lemma}
\label{Lemma_Jacobian}
Let $K\subset \R{n}$ be a convex body. For each given $i\in \mathbb{N}^+$,
\begin{equation}
(\mathcal{H}^{n-1}\mres {A_i},n-1)\,ap\,J\tau_i(u)=F_K(u),
\end{equation}
for $\mathcal{H}^{n-1}$ almost all $u\in A_i$.
\end{lemma}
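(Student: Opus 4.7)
The plan is to exploit the generalized second-order differentiability of the support function $h_K$, together with the fact that the inverse Gauss map provides a subgradient choice of $h_K$ on $\regn K$, in order to compute the approximate differential of $\tau_i$ and then read off its approximate Jacobian.

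First, I would isolate the subset of $A_i$ of full $\mathcal{H}^{n-1}$-measure on which the computation goes through. Since $A_i$ is closed in $S^{n-1}$ by Lemma \ref{lemma_closedness}, it is $(\mathcal{H}^{n-1},n-1)$ rectifiable, so Theorem 3.2.19 of \cite{federer} gives
\begin{equation*}
\Tan^{n-1}(\mathcal{H}^{n-1}\mres A_i, u_0) = u_0^\perp
\end{equation*}
for $\mathcal{H}^{n-1}$-a.e.\ $u_0 \in A_i$, and \eqref{hkdifferentiable} allows me to further require $u_0 \in D^2(h_K)$. Fix such a $u_0$; as already recorded in the remark after the definition of $A_i$, we have $A_i \subset \regn K$, so $h_K$ is classically differentiable at $u_0$ with $\nabla h_K(u_0) = \tau_K(u_0)$.

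Next, I would extend $\tau_K$ from $\regn K$ to a subgradient choice $\vartheta$ of $h_K$ on all of $\R{n}$ by picking an arbitrary element of $\partial h_K(u)$ wherever $h_K$ is not differentiable; since $\partial h_K(u) = \{\tau_K(u)\}$ on $\regn K \supset A_i$, we have $\vartheta|_{A_i} = \tau_i$. The Alexandrov-type characterization of generalized second-order differentiability recorded in Section \ref{section_preli} then produces a neighborhood $V$ of $u_0$ on which
\begin{equation*}
\vectornorm{\vartheta(u) - \vartheta(u_0) - Ah_K(u_0)(u - u_0)} = o(\vectornorm{u - u_0}),
\end{equation*}
and restriction to $V \cap A_i$ transfers the same estimate to $\tau_i$. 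This full-neighborhood $o$-estimate is stronger than what the definition of approximate differentiability demands, so $\tau_i$ is $(\mathcal{H}^{n-1}\mres A_i, n-1)$ approximately differentiable at $u_0$ with $\xi = \tau_i(u_0)$ and $\zeta = Ah_K(u_0)$; the corresponding approximate differential is therefore $\zeta|_{u_0^\perp} = Ah_K(u_0)|_{u_0^\perp}$.

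Finally, property (2) of $D^2(h_K)$ recorded in Section \ref{section_preli} says that $Ah_K(u_0)$ preserves $u_0^\perp$, so $Ah_K(u_0)|_{u_0^\perp}$ is in fact an endomorphism of the $(n-1)$-dimensional Hilbert space $u_0^\perp$, and its approximate Jacobian collapses to $|\Det(Ah_K(u_0)|_{u_0^\perp})| = F_K(u_0)$ by the very definition of the generalized curvature function. The main technical point to watch will be the bookkeeping between strong and approximate differentiability, and the matching of the ambient linear map $\zeta:\R{n}\to\R{n}$ with its restriction to the approximate tangent space; once one notes that a full-neighborhood $o$-estimate automatically yields a density-zero exceptional set and that $u_0^\perp$ is automatically invariant under $Ah_K(u_0)$, no genuinely new analytic estimate is required beyond the Alexandrov expansion.
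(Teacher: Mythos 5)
Your proposal is correct and follows essentially the same route as the paper's proof: rectifiability and closedness of $A_i$ plus Theorem 3.2.19 of \cite{federer} to identify the approximate tangent space with $u_0^\perp$, the Alexandrov characterization of generalized second-order differentiability applied to a subgradient choice that agrees with $\tau_i$ on $A_i$, and then the invariance of $u_0^\perp$ under $Ah_K(u_0)$ to read off the Jacobian as $F_K(u_0)$. No substantive difference from the paper's argument.
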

\begin{proof}
Observe that $A_i$ is an $(\mathcal{H}^{n-1},n-1)$ rectifiable and $\mathcal{H}^{n-1}$ measurable subset (since $A_i$ is closed by Lemma \ref{lemma_closedness}) of $\mathbb{R}^n$. By Theorem 3.2.19 in \cite{federer}, for $\mathcal{H}^{n-1}$ almost all $u_0\in A_i$, $\Tan^{n-1}(\mathcal{H}^{n-1}\mres A_i,u_0)$ is an $(n-1)$ dimensional subspace of $\R{n}$. By definition, $$\Tan^{n-1}(\mathcal{H}^{n-1}\mres A_i,u_0)\subset \Tan(S^{n-1},u_0)=u_0^\perp.$$ Hence, 
\begin{equation}
\label{eq_tangent plane calc}
\Tan^{n-1}(\mathcal{H}^{n-1}\mres A_i,u_0)= u_0^\perp,
\end{equation}
for $\mathcal{H}^{n-1}$ almost all $u_0\in A_i$. This and \eqref{hkdifferentiable} imply that $\mathcal{H}^{n-1}$ almost all vectors in $A_i$ are in $D^2(h_K)$ and satisfy \eqref{eq_tangent plane calc}. Thus, to prove this lemma, we may assume that $u_0\in D^2(h_K)$ and $u_0$ satisfies \eqref{eq_tangent plane calc}.

Let $\varepsilon>0$ be a real number and $\vartheta:\R{n}\rightarrow \R{n}$ be an arbitrary subgradient choice of $h_K$. The fact that $u_0\in D^2(h_K)$ implies that there exists $\delta_0>0$ and a symmetric linear map $Ah_K(u_0):\R{n}\rightarrow \R{n}$ such that 
\begin{equation*}
\{u\in A_i:\vectornorm{\vartheta(u)-\vartheta(u_0)-A{h_K}(u_0)(u-u_0)}>\varepsilon\vectornorm{u-u_0}\}\cap B(u_0,\delta)=\emptyset,
\end{equation*}
for $0<\delta\leq \delta_0$.

Since each $u\in A_i$ is a regular normal vector, we have $\vartheta(u)=\tau_i(u)$ for each $u\in A_i$. Hence by the definition of $\Theta^{n-1}$, we have
\begin{align*}
\Theta^{n-1}\left(\left\{(\mathcal{H}^{n-1}\mres A_i)\mres {\left(\R{n}\backslash
\{u\in A_i:\vectornorm{\tau_i(u)-\tau_i(u_0)-Ah_K(u_0)(u-u_0)}\right.}\right.\right.\\ \leq\left.\left.{\left.\varepsilon\vectornorm{u-u_0}\}\right)},u_0\right\}\right)=0.
\end{align*}

By the definition of $(\mathcal{H}^{n-1}\mres {A_i},n-1)\,\ap\,D\tau_i(u_0)$ and \eqref{eq_tangent plane calc}, we have
\begin{equation*}
(\mathcal{H}^{n-1}\mres A_i,n-1)\ap\,D\tau_i(u_0)=\left.Ah_K(u_0)\right|_{\Tan^{n-1}({\mathcal{H}^{n-1}\mres A_i,u_0)}}=\left.Ah_K(u_0)\right|_{u_0^\perp}.
\end{equation*}
By this, the definition of $(\mathcal{H}^{n-1}\mres A_i,n-1)\ap\,J\tau_i(u_0)$, that $u_0^\perp$ is an invariant subspace of $Ah_K(u_0)$, and the definition of the generalized curvature function, we have 
\begin{equation*}
(\mathcal{H}^{n-1}\mres {A_i},n-1)\,\ap\,J\tau_i(u_0)=F_K(u_0).
\end{equation*}
\end{proof}
\section{Equivalence between Definitions \eqref{eq_new_6} and \eqref{eq_new_9}}
\label{sec_equivalences}
In this section, the relationship between Definitions \eqref{eq_new_6} and \eqref{eq_new_9} will be unveiled by providing a direct proof of the equivalence between the two. The proof utilizes the tool we established in Section \ref{sec_lip}. At the end of this section, the $L_p$ affine isoperimetric inequality (with equality condition) is established using the new representation \eqref{eq_new_9} of $L_p$ affine surface area.

Let us first introduce some notations.

Let
\begin{align*}
T_3&=\left\{h:S^{n-1}\rightarrow \mathbb{R}\,\,\mathcal{H}^{n-1} \text{measurable}:0<h<\infty\text{ and } \int_{S^{n-1}}h^n(u)d\mathcal{H}^{n-1}(u)<\infty\right\},\\
T_4&=\left\{h:S^{n-1}\rightarrow \mathbb{R} \text{ continuous}:h>0\right\}. 
\end{align*}
Note that $T_3\supset T_4$.

We claim that for each positive real number $p$ and each $K\in \mathcal{K}_0^n$,
\begin{equation}
\label{eq_local 7}
\begin{aligned}
&\inf_{g\in T_1}\left\{\left(\int_{\partial K}g^{-n}(x)H_K(x)d\mathcal{H}^{n-1}(x)\right)^{\frac{p}{n+p}}\left(\int_{\partial K}g^p(x)(h_K(\nu_K(x)))^{1-p}d\mathcal{H}^{n-1}(x)\right)^{\frac{n}{n+p}}\right\}\\
=&\inf_{g\in T_1}\left\{\left(\int_{H^+}g^{-n}(x)H_K(x)d\mathcal{H}^{n-1}(x)\right)^{\frac{p}{n+p}}\left(\int_{H^+}g^p(x)(h_K(\nu_K(x)))^{1-p}d\mathcal{H}^{n-1}(x)\right)^{\frac{n}{n+p}}\right\}.
\end{aligned}
\end{equation}
Indeed, for each $g\in T_1$, let
\begin{equation*}
g_{\varepsilon}(x)=
\begin{cases}
\varepsilon,&\text{if }x\not \in H^+\text{ and }g(x)>\varepsilon,\\
g(x),&\text{otherwise}.
\end{cases}
\end{equation*}
Notice that $g_\varepsilon\leq \varepsilon$ on $\partial K\setminus H^+$. By \eqref{sharp}, \eqref{astr}, the choice of $g_\varepsilon$, and the fact that $$\int_{\partial K\setminus H^+}h_K(\nu_K(x))^{1-p}d\mathcal{H}^{n-1}(x)$$ is a finite number,
\begin{equation*}
\begin{aligned}
&\lim_{\varepsilon\rightarrow 0}\left(\int_{\partial K}g_{\varepsilon}^{-n}(x)H_K(x)d\mathcal{H}^{n-1}(x)\right)^{\frac{p}{n+p}}\left(\int_{\partial K}g_{\varepsilon}^p(x)(h_K(\nu_K(x)))^{1-p}d\mathcal{H}^{n-1}(x)\right)^{\frac{n}{n+p}}\\
\leq &\left(\int_{H^+}g^{-n}(x)H_K(x)d\mathcal{H}^{n-1}(x)\right)^{\frac{p}{n+p}}\left(\int_{H^+}g^p(x)(h_K(\nu_K(x)))^{1-p}d\mathcal{H}^{n-1}(x)\right.\\
&\phantom{weqweqwesdferwerewrwerwrwerwqwewqewqeqe}\left.+\lim_{\varepsilon\rightarrow 0}\varepsilon^p\int_{\partial K\setminus H^+}(h_K(\nu_K(x)))^{1-p}d\mathcal{H}^{n-1}(x)\right)^{\frac{n}{n+p}}\\
=&\left(\int_{H^+}g^{-n}(x)H_K(x)d\mathcal{H}^{n-1}(x)\right)^{\frac{p}{n+p}}\left(\int_{H^+}g^p(x)(h_K(\nu_K(x)))^{1-p}d\mathcal{H}^{n-1}(x)\right)^{\frac{n}{n+p}}.
\end{aligned}
\end{equation*} 
This shows that the left side is less than or equal to the right side in \eqref{eq_local 7}. By the fact that $H^+\subset \partial K$ and that all integrands in \eqref{eq_local 7} are non-negative, the left side is greater than or equal to the right side in \eqref{eq_local 7}. 

By \eqref{eq_local 1014},  we can further show that
\begin{lemma}
\label{Lemma_H+}
Let $p>0$ be a real number. Suppose $K\in \mathcal{K}_0^n$. We have
\begin{equation*}
\begin{aligned}
&\inf_{g\in{T_2}}\left\{\left(\int_{\partial K}g^{-n}(x)dC_0(K,x)\right)^{\frac{p}{n+p}}\left(\int_{\partial K}g^p(x)(h_K(\nu_K(x)))^{1-p}dC_{n-1}(K,x)\right)^{\frac{n}{n+p}}\right\}\\
=&\inf_{g\in T_1}\left\{\left(\int_{H^+}g^{-n}(x)H_K(x)d\mathcal{H}^{n-1}(x)\right)^{\frac{p}{n+p}}\left(\int_{H^+}g^p(x)(h_K(\nu_K(x)))^{1-p}d\mathcal{H}^{n-1}(x)\right)^{\frac{n}{n+p}}\right\}.
\end{aligned}
\end{equation*}
\end{lemma}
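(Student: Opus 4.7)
The proof is a direct chaining of two identities already established in the preceding text, so the plan is quite short.

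The plan is to observe that equation \eqref{eq_local 1014} identifies the first quantity in the lemma (the infimum over $T_2$ of $L_2$) with
\[
\inf_{g\in T_1}\left\{\left(\int_{\partial K}g^{-n}(x)H_K(x)d\mathcal{H}^{n-1}(x)\right)^{\frac{p}{n+p}}\left(\int_{\partial K}g^p(x)(h_K(\nu_K(x)))^{1-p}d\mathcal{H}^{n-1}(x)\right)^{\frac{n}{n+p}}\right\},
\]
while equation \eqref{eq_local 7} in turn identifies this with the same expression but with the domain of integration replaced by $H^+$. So after citing these two displays in sequence, the equality asserted in the lemma is immediate.

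To spell out the step slightly: starting from the $T_2$ formulation in $L_2$, an application of \eqref{eq_local 1014} gives equality with the $T_1$-infimum of an $L_1$-type functional integrated over all of $\partial K$; then \eqref{eq_local 7} shows that restricting the domain of integration from $\partial K$ to $H^+$ does not change the value of the infimum. The direction ``restricted $\leq$ unrestricted'' in \eqref{eq_local 7} is trivial since $H^+\subset\partial K$ and the integrands are non-negative, while the reverse direction is the truncation argument already written out before the lemma statement (one replaces $g\in T_1$ by $g_\varepsilon$ that equals $g$ on $H^+$ and $\varepsilon$ outside, then exploits $\mathcal{H}^{n-1}(\partial K\setminus (\nor K\cup\hat\partial K))=0$ together with $H_K=0$ on $\nor K\setminus H^+$ and the finiteness of $\int_{\partial K\setminus H^+}(h_K\circ\nu_K)^{1-p}d\mathcal{H}^{n-1}$ to let $\varepsilon\to 0$).

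No step here is a substantive obstacle; the genuine technical content has already been carried out in the proof of Theorem \ref{thm_new_2} and in the derivation of \eqref{eq_local 7}. The role of the lemma is simply to record, in the form most convenient for the subsequent equivalence argument of Section \ref{sec_equivalences}, the combined consequence of these two earlier identities, namely that the curvature-measure definition \eqref{eq_new_9} collapses to an $L_1$-type infimum over $T_1$ in which the domain of integration can be taken to be $H^+$. Thus the proof consists essentially of writing ``by \eqref{eq_local 1014} and \eqref{eq_local 7}'' and concluding.
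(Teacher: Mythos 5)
Your proposal is correct and is exactly the paper's (implicit) argument: the lemma is stated immediately after \eqref{eq_local 7} with the words ``By \eqref{eq_local 1014}, we can further show that\dots'', i.e.\ the intended proof is precisely the chaining of \eqref{eq_local 1014} with \eqref{eq_local 7} that you describe. Your recap of the truncation argument behind \eqref{eq_local 7} (with the harmless simplification of setting $g_\varepsilon=\varepsilon$ rather than $\min\{g,\varepsilon\}$ off $H^+$) also matches the paper.
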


Recall that $F^+$ is the set of unit vectors at which the curvature function $F_K$ is positive (see \eqref{105}). By a similar argument from above,
\begin{equation*}
\begin{aligned}
&\inf_{h\in T_3}\left\{\left(\int_{S^{n-1}}h^n(u)d\mathcal{H}^{n-1}(u)\right)^{\frac{p}{n+p}}\left(\int_{S^{n-1}}h^{-p}(u)h_K^{1-p}(u)F_K(u)d\mathcal{H}^{n-1}(u)\right)^{\frac{n}{n+p}}\right\}\\
=&\inf_{h\in T_3}\left\{\left(\int_{F^+}h^n(u)d\mathcal{H}^{n-1}(u)\right)^{\frac{p}{n+p}}\left(\int_{F^+}h^{-p}(u)h_K^{1-p}(u)F_K(u)d\mathcal{H}^{n-1}(u)\right)^{\frac{n}{n+p}}\right\}.
\end{aligned}
\end{equation*} 

This, together with Hilfssatz 2 and 3 in \cite{leitchtweiss1989} (see also~\cite{leichtweiss1998}), immediately implies
\begin{lemma}
\label{Lemma_F+}
Let $p>0$ be a real number. Suppose $K\in \mathcal{K}_0^n$. We have
\begin{equation*}
\begin{aligned}
&\inf_{h\in T_4}\left\{\left(\int_{S^{n-1}}h^n(u)d\mathcal{H}^{n-1}(u)\right)^{\frac{p}{n+p}}\left(\int_{S^{n-1}}h^{-p}(u)h_K^{1-p}(u)dS_K(u)\right)^{\frac{n}{n+p}}\right\}\\
=&\inf_{h\in T_3}\left\{\left(\int_{F^+}h^n(u)d\mathcal{H}^{n-1}(u)\right)^{\frac{p}{n+p}}\left(\int_{F^+}h^{-p}(u)h_K^{1-p}(u)F_K(u)d\mathcal{H}^{n-1}(u)\right)^{\frac{n}{n+p}}\right\}.
\end{aligned}
\end{equation*}
\end{lemma}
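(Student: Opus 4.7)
The plan is to establish Lemma \ref{Lemma_F+} by combining the unnumbered displayed equation immediately preceding its statement (which reduces the $T_3$-infimum against $F_K\,d\mathcal{H}^{n-1}$ on all of $S^{n-1}$ to the $T_3$-infimum on $F^+$) with the equality between the Lutwak-style $T_4$-infimum against $dS_K$ on $S^{n-1}$ and the $T_3$-infimum against $F_K\,d\mathcal{H}^{n-1}$ on $S^{n-1}$. The overall structure is the sphere-side analogue of Lemma \ref{Lemma_H+}, and the argument mirrors the proof of Theorem \ref{thm_new_2} with the boundary-side objects $(\partial K, C_0, C_{n-1}, H_K, \hat{\partial} K)$ replaced by $(S^{n-1}, \mathcal{H}^{n-1}, S_K, F_K, S^{n-1}\setminus F^+)$.

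For the $\geq$-direction I would invoke the Lebesgue decomposition $dS_K = F_K\,d\mathcal{H}^{n-1} + dS_K^{\mathrm{sing}}$ with non-negative singular part, which immediately gives $\int_{S^{n-1}} h^{-p}h_K^{1-p}\,dS_K \geq \int_{S^{n-1}} h^{-p}h_K^{1-p}F_K\,d\mathcal{H}^{n-1}$ for any $h \in T_4 \subset T_3$; the Lutwak expression thus dominates its density analogue at $h$. For the $\leq$-direction, given $h\in T_3$ I would produce an approximating sequence in $T_4$ in three stages, paralleling the $(f_i,h_i,g_j)$ construction in the proof of Theorem \ref{thm_new_2}: first, truncate $h$ into $[1/i,i]$, extending by large values on the (necessarily $\mathcal{H}^{n-1}$-null) support of $dS_K^{\mathrm{sing}}$; next, apply Lusin's theorem to the finite Borel measure $d\eta = d\mathcal{H}^{n-1} + h_K^{1-p}\,dS_K$ on $S^{n-1}$ (finite because $h_K$ is bounded below by a positive constant on $S^{n-1}$ when $K\in\mathcal{K}_0^n$, and $S_K(S^{n-1}) < \infty$) to obtain a continuous $\tilde{g}_j$ agreeing with the truncation outside an $\eta$-small set; finally, retruncate $\tilde{g}_j$ into $[1/i,i]$ to preserve continuity and positivity. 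This is precisely the content of Hilfss\"atze 2 and 3 of Leichtwei{\ss} \cite{leitchtweiss1989}, and composes with the preceding displayed equation to close the equality.

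The main obstacle is controlling the singular part of $S_K$ during the Lusin approximation: without the initial truncation $h\in[1/i,i]$, the factor $h^{-p}$ could blow up on the exceptional set produced by Lusin's theorem and spoil the estimate. The bound $h^{-p}\leq i^p$ confines the singular contribution to at most $i^p$ times the $\eta$-measure of the exceptional set, which is $\eta$-small and thus arbitrarily small. A similar bound $\int h^n\,d\mathcal{H}^{n-1}\leq i^n\cdot\mathcal{H}^{n-1}(S^{n-1})$ handles the first integral, so the three-stage approximation converges in both factors of the Lutwak expression and the $\leq$-direction follows.
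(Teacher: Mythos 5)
Your proposal is correct and follows the same route as the paper: the displayed identity preceding the lemma (restricting the $T_3$-infimum to $F^+$) is combined with Hilfss\"atze 2 and 3 of Leichtwei{\ss}, which is exactly what the paper does. The only difference is that the paper simply cites Leichtwei{\ss} for the equality between the $T_4$-infimum against $dS_K$ and the $T_3$-infimum against $F_K\,d\mathcal{H}^{n-1}$, whereas you also sketch its proof (Lebesgue decomposition of $S_K$ for one direction, truncation plus Lusin approximation for the other), and that sketch is sound.
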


Now, we shall prove the equivalence between Definitions \eqref{eq_new_6} and \eqref{eq_new_9}.

\begin{theorem}
\label{thm_new_3}
Let $p>0$ be a real number. Suppose $K\in \mathcal{K}_0^n$. We have
\begin{equation*}
\begin{aligned}
&\inf_{h}\left\{\left(\int_{S^{n-1}}h^n(u)d\mathcal{H}^{n-1}(u)\right)^{\frac{p}{n+p}}\left(\int_{S^{n-1}}h^{-p}(u)h_K^{1-p}(u)dS_K(u)\right)^{\frac{n}{n+p}}\right\}\\
=&\inf_{g}\left\{\left(\int_{\partial K}g^{-n}(x)dC_0(K,x)\right)^{\frac{p}{n+p}}\left(\int_{\partial K}g^p(x)(h_K(\nu_K(x)))^{1-p}dC_{n-1}(K,x)\right)^{\frac{n}{n+p}}\right\},
\end{aligned}
\end{equation*}
where the infimums are taken over all positive continuous functions $h:S^{n-1}\rightarrow \mathbb{R}$ and $g:\partial K\rightarrow \mathbb{R}$ respectively.
\end{theorem}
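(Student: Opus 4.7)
The plan is to use Lemma \ref{Lemma_H+} and Lemma \ref{Lemma_F+} to reduce the stated equality of infimums over continuous functions to an equality of infimums over the measurable classes $T_3$ and $T_1$, with integrals restricted to $F^+$ and $H^+$ respectively and equipped with the weights $F_K\,d\mathcal{H}^{n-1}$ and $H_K\,d\mathcal{H}^{n-1}$. The bridge between the two resulting infimums will be a change of variables via the inverse Gauss map $\tau_K$, enabled by the Lipschitz property (Lemma \ref{Lemma_tau lip}) and the approximate Jacobian computation (Lemma \ref{Lemma_Jacobian}) of the restrictions $\tau_i=\tau_K|_{A_i}$ developed in Section \ref{sec_lip}.

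Given $g\in T_1$ with $\int_{H^+} g^{-n} H_K\,d\mathcal{H}^{n-1}<\infty$ (otherwise the product is already $\infty$ and contributes nothing to the infimum), I would define $h:S^{n-1}\to\mathbb{R}$ by $h(u)=1/g(\tau_K(u))$ for $u\in F^+$ and $h\equiv 1$ elsewhere. By Lemma \ref{Lemma 4.3}, Lemma \ref{lemma_H>0}, and Lemma \ref{Lemma_reciporical of H and F}, the map $\tau_K$ restricts to a bijection $F^+\to H^+$ modulo $\mathcal{H}^{n-1}$-null sets, with inverse $\nu_K$, under which $F_K(u)H_K(\tau_K(u))=1$ and $h_K(u)=h_K(\nu_K(\tau_K(u)))$. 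By Corollary \ref{corollary_a.e.}, $F^+$ is covered modulo a null set by $\bigcup_{i\in\mathbb{N}^+} A_i$, so I would disjointify via $B_i=(A_i\setminus\bigcup_{j<i}A_j)\cap F^+$ and apply Federer's coarea formula (Theorem \ref{coarea formula}) on each $B_i$, using injectivity of $\tau_K$ on $F^+$ so that the sum over $\tau_i^{-1}(x)$ collapses to a single term, together with $J\tau_i=F_K$. With integrand $g^{-n}(\tau_K(u))/F_K(u)$ this yields
\[
\int_{F^+} h^n\,d\mathcal{H}^{n-1}=\int_{H^+} g^{-n} H_K\,d\mathcal{H}^{n-1},
\]
and with integrand $g^p(\tau_K(u))h_K^{1-p}(u)$,
\[
\int_{F^+} h^{-p} h_K^{1-p} F_K\,d\mathcal{H}^{n-1}=\int_{H^+} g^p(h_K\circ\nu_K)^{1-p}\,d\mathcal{H}^{n-1}.
\]
Thus the two products agree exactly, $h\in T_3$, and the infimum over $T_3$ is bounded above by the infimum over $T_1$. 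The reverse inequality is obtained by the symmetric construction: given $h\in T_3$, set $g(x)=1/h(\nu_K(x))$ on $H^+$, partition $H^+$ using the sets $(\partial K)_r$ (Lemma \ref{Lemma 4.1}), and apply the coarea formula to $\nu_r=\nu_K|_{(\partial K)_r}$, which by Lemma \ref{Lemma 4.2} is Lipschitz with approximate Jacobian $H_K$.

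The main obstacle is the measure-theoretic bookkeeping: verifying that $\tau_K$ and $\nu_K$ genuinely implement a bijection between $F^+$ and $H^+$ modulo $\mathcal{H}^{n-1}$-null sets, that the constructed $h$ (respectively $g$) lies in $T_3$ (respectively $T_1$) after a suitable extension outside $F^+$ (respectively $H^+$), and that the $B_i$'s and the $(\partial K)_r$'s exhaust $F^+$ and $H^+$ modulo null sets. These checks hinge on the Alexandrov-type results \eqref{sharp} and \eqref{hkdifferentiable}, together with the characterizations of $F^+$ and $H^+$ given by Lemma \ref{Lemma 4.3} and Lemma \ref{lemma_H>0}, which pin down exactly when $\tau_K(u)\in(\partial K)_r$ and when a point of $(\partial K)_i$ is normal with positive Gauss curvature.
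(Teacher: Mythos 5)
Your proposal is correct and follows essentially the same route as the paper: reduce via Lemmas \ref{Lemma_H+} and \ref{Lemma_F+} to the $H^+$/$F^+$-restricted infimums over $T_1$ and $T_3$, then transfer test functions back and forth by $g\mapsto 1/(g\circ\tau_K)$ and $h\mapsto 1/(h\circ\nu_K)$ using Federer's coarea formula on the Lipschitz restrictions of the (inverse) Gauss map. The only difference is cosmetic: you apply $\tau_i$ in the direction starting from $g\in T_1$ and $\nu_r$ in the direction starting from $h\in T_3$, whereas the paper does the reverse; since the resulting change-of-variable identities are two-sided equalities, either choice works.
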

\begin{proof}
By Lemmas \ref{Lemma_H+} and \ref{Lemma_F+}, it suffices to show
\begin{equation}
\label{102}
\begin{aligned}
&\inf_{g\in T_1}\left\{\left(\int_{H^+}g^{-n}(x)H_K(x)d\mathcal{H}^{n-1}(x)\right)^{\frac{p}{n+p}}\left(\int_{H^+}g^p(x)(h_K(\nu_K(x)))^{1-p}d\mathcal{H}^{n-1}(x)\right)^{\frac{n}{n+p}}\right\}\\
=&\inf_{h\in T_3}\left\{\left(\int_{F^+}h^n(u)d\mathcal{H}^{n-1}(u)\right)^{\frac{p}{n+p}}\left(\int_{F^+}h^{-p}(u)h_K^{1-p}(u)F_K(u)d\mathcal{H}^{n-1}(u)\right)^{\frac{n}{n+p}}\right\}.
\end{aligned}
\end{equation}

Let us first show that the left side is greater than or equal to the right side in \eqref{102}. 

Let $g\in T_1$ be such that $\int_{H^+}g^{-n}(x)H_K(x)d\mathcal{H}^{n-1}(x)<\infty$ and $r>0$. We first observe that $(\partial K)_r$ and $S^{n-1}$ are $(\mathcal{H}^{n-1},n-1)$ rectifiable and $\mathcal{H}^{n-1}$ measurable (since $(\partial K)_r$ is closed by Lemma \ref{Lemma 4.2}). 

By Lemma \ref{Lemma 4.2}, $\nu_r$, the restriction of $\nu_K$ to $(\partial K)_r$, is Lipschitz and 
\begin{equation}
\label{roman 1}
(\mathcal{H}^{n-1}\mres {(\partial K)_r},n-1)\,\ap\,J\nu_r(x)=H_K(x),
\end{equation}
for $\mathcal{H}^{n-1}$ almost all $x\in (\partial K)_r$.

The fact that $\nu_r$ is Lipschitz and \eqref{sharp} give,
\begin{equation}
\label{eq_local 12}
\mathcal{H}^{n-1}(\nu_r((\partial K)_r\setminus \nor K))=0.
\end{equation}

By \eqref{roman 1} and Federer's coarea formula,
\begin{equation}
\label{eq_local 21}
\begin{aligned}
\int_{(\partial K)_r\cap H^+}g^{-n}(x)H_K(x)d\mathcal{H}^{n-1}(x)
=&\int_{(\partial K)_r}\chara{H^+}(x)g^{-n}(x)H_K(x)d\mathcal{H}^{n-1}(x)\\
=&\int_{S^{n-1}}\left(\int_{(\nu_r)^{-1}(u)}\chara{H^+}(x)g^{-n}(x)d\mathcal{H}^0(x)\right)d\mathcal{H}^{n-1}(u).
\end{aligned}
\end{equation}
It is implied in Federer's coarea formula that 
\begin{equation*}
\int_{(\nu_r)^{-1}(u)}\chara{H^+}(x)g^{-n}(x)d\mathcal{H}^0(x)
\end{equation*}
is $\mathcal{H}^{n-1}$ measurable on $S^{n-1}$ in $u$. By \eqref{hkdifferentiable}, \eqref{eq_local 12}, Lemma \ref{Lemma_reciporical of H and F}, and \eqref{hkdifferentiable}, \eqref{eq_local 12} once again, the following holds for $\mathcal{H}^{n-1}$ almost all $u\in S^{n-1}$,
\begin{equation}
\label{eq_local 22}
\begin{aligned}
\int_{(\nu_r)^{-1}(u)}\chara{H^+}(x)g^{-n}(x)d\mathcal{H}^0(x)
=&\chara{\nu_r((\partial K)_r))\cap D^2(h_K)}{(u)}\chara{H^+}{(\tau_K(u))}g(\tau_K(u))^{-n}\\
=&\chara{\nu_r((\partial K)_r)\cap \nor K)\cap D^2(h_K)}{(u)}\chara{H^+}{(\tau_K(u))}g(\tau_K(u))^{-n}\\
=&\chara{\nu_r((\partial K)_r)\cap \nor K)\cap D^2(h_K)}{(u)}\chara{F^+}{(u)}g(\tau_K(u))^{-n}\\
=&\chara{\nu_r((\partial K)_r)}{(u)}\chara{F^+}{(u)}g(\tau_K(u))^{-n}.
\end{aligned}
\end{equation}
Hence $\chara{\nu_r((\partial K)_r)}(u)\chara{F^+}(u)(g(\tau_K(u)))^{-n}$, as a function of $u$, is $\mathcal{H}^{n-1}$ measurable on $S^{n-1}$. By \eqref{eq_local 21} and \eqref{eq_local 22},
\begin{equation}
\label{103}
\int_{(\partial K)_r\cap H^+}g^{-n}(x)H_K(x)d\mathcal{H}^{n-1}(x)
=\int_{S^{n-1}}\chara{\nu_r((\partial K)_r)}(u)\chara{F^+}(u)(g(\tau_K(u)))^{-n}d\mathcal{H}^{n-1}(u).
\end{equation}
Note that $(\partial K)_r$ is increasing as $r\rightarrow 0$. Let $r\rightarrow 0$ in \eqref{103}. It follows from the monotone convergence theorem, Lemma \ref{Lemma 4.1}, and Lemma \ref{Lemma 4.3} that, 
\begin{equation}
\label{eq_local 8}
\int_{H^+}g^{-n}(x)H_K(x)d\mathcal{H}^{n-1}(x)=\int_{F^+}(g(\tau_K(u)))^{-n}d\mathcal{H}^{n-1}(u).
\end{equation}

With the same technique, one can prove that
\begin{equation}
\label{eq_local 14}
\int_{H^+}g^p(x)(h_K(\nu_K(x)))^{1-p}d\mathcal{H}^{n-1}(x)=\int_{F^+}(g(\tau_K(u)))^ph_K^{1-p}(u)F_K(u)d\mathcal{H}^{n-1}(u).
\end{equation}

Set 
\begin{equation*}
h(u)=
\begin{cases}
1/g(\tau_K(u)),& \text{if } u\in F^+,\\
1,& \text{otherwise}. 
\end{cases}
\end{equation*}
Note that $0<h<\infty$ and $h$ is $\mathcal{H}^{n-1}$ measurable on $S^{n-1}$. By \eqref{eq_local 8}, $h^n$ is $\mathcal{H}^{n-1}$ integrable on $S^{n-1}$. Hence $h\in T_3$. By \eqref{eq_local 8}, \eqref{eq_local 14}, the left side is greater than or equal to the right side in \eqref{102}.

Now we show that the left side is less than or equal to the right side in \eqref{102}. \\
\indent Let $h\in T_3$ be such that $\int_{F^+}h^{-p}(u)h_K^{1-p}(u)F_K(u)d\mathcal{H}^{n-1}(u)<\infty$ and $i\in \mathbb{N}^+$. We first observe that $A_i$ and $\partial K$ are $(\mathcal{H}^{n-1}, n-1)$ rectifiable and $\mathcal{H}^{n-1}$ measurable (since $A_i$ is closed by Lemma \ref{lemma_closedness}). By Lemma \ref{Lemma_tau lip}, $\tau_i$, the restriction of $\tau_K$ to $A_i$, is Lipschitz. By Lemma \ref{Lemma_Jacobian}, for $\mathcal{H}^{n-1}$ almost all $u\in A_i$,
\begin{equation}
\label{roman ii}
(\mathcal{H}^{n-1}\mres {A_i},n-1)\,\ap\,J\tau_i(u)=F_K(u).
\end{equation}
\indent The fact that $\tau_i$ is Lipschitz and \eqref{hkdifferentiable} give,
\begin{equation}
\label{eq_local 16}
\mathcal{H}^{n-1}(\tau_i(A_i\setminus D^2(h_K)))=0.
\end{equation}
\indent By \eqref{roman ii} and Federer's coarea formula, 
\begin{equation}
\label{eq_local 24}
\begin{aligned}
\int_{A_i\cap F^+}h^{-p}(u)h_K^{1-p}(u)F_K(u)d\mathcal{H}^{n-1}(u)
=&\int_{A_i}\chara{F^+}(u)h^{-p}(u)h_K^{1-p}(u)F_K(u)d\mathcal{H}^{n-1}(u)\\
=&\int_{\partial K}\left(\int_{(\tau_i)^{-1}(x)}\chara{F^+}(u)h^{-p}(u)h_K^{1-p}(u)d\mathcal{H}^0(u)\right)d\mathcal{H}^{n-1}(x).
\end{aligned}
\end{equation}
\indent It is implied in Federer's coarea formula that 
\begin{equation*}
\int_{(\tau_i)^{-1}(x)}\chara{F^+}(u)h^{-p}(u)h_K^{1-p}(u)d\mathcal{H}^0(u)
\end{equation*}
is $\mathcal{H}^{n-1}$ measurable on $\partial K$ in $x$. By \eqref{sharp}, \eqref{eq_local 16}, Lemma \ref{Lemma_reciporical of H and F} and \eqref{sharp}, \eqref{eq_local 16} once again, the following holds for $\mathcal{H}^{n-1}$ almost all $x\in\partial K$,
\begin{equation}
\label{eq_local 25}
\begin{aligned}
\int_{(\tau_i)^{-1}(x)}\chara{F^+}(u)h^{-p}(u)h_K^{1-p}(u)d\mathcal{H}^0(u)
=&\chara{\tau_i(A_i)\cap \nor K}{(x)}\chara{F^+}{(\nu_K(x))}h(\nu_K(x))^{-p}h_K(\nu_K(x))^{1-p}\\
=&\chara{\tau_i(A_i\cap D^2(h_K))\cap \nor K}{(x)}\chara{F^+}{(\nu_K(x))}h(\nu_K(x))^{-p}h_K(\nu_K(x))^{1-p}\\
=&\chara{\tau_i(A_i\cap D^2(h_K))\cap \nor K}{(x)}\chara{H^+}{(x)}h(\nu_K(x))^{-p}h_K(\nu_K(x))^{1-p}\\
=&\chara{\tau_i(A_i)}{(x)}\chara{H^+}{(x)}h(\nu_K(x))^{-p}h_K(\nu_K(x))^{1-p}.
\end{aligned}
\end{equation}
Hence $\chara{\tau_i(A_i)}(x)\chara{H^+}(x)(h(\nu_K(x)))^{-p}(h_K(\nu_K(x)))^{1-p}$, as a function of $x$, is $\mathcal{H}^{n-1}$ measurable on $\partial K$. By \eqref{eq_local 24} and \eqref{eq_local 25},
\begin{equation}
\label{104}
\int_{A_i\cap F^+}h^{-p}(u)h_K^{1-p}(u)F_K(u)d\mathcal{H}^{n-1}(u)=\int_{\partial K}\chara{\tau_i(A_i)}(x)\chara{H^+}(x)(h(\nu_K(x)))^{-p}(h_K(\nu_K(x)))^{1-p}d\mathcal{H}^{n-1}(x).
\end{equation}
Note that $A_i$ is increasing as $i\rightarrow \infty$. Let $i\rightarrow \infty$ in \eqref{104}. It follows from the monotone convergence theorem, Corollary \ref{corollary_a.e.}, and Lemma \ref{lemma_H>0} that
\begin{equation}
\label{eq_local 10}
\int_{F^+}h^{-p}(u)h_K^{1-p}(u)F_K(u)d\mathcal{H}^{n-1}(u)=\int_{H^+}(h(\nu_K(x)))^{-p}(h_K(\nu_K(x)))^{1-p}d\mathcal{H}^{n-1}(x).
\end{equation}

With the same technique, one can prove
\begin{equation}
\label{eq_local 17}
\int_{F^+}h^{n}(u)d\mathcal{H}^{n-1}(u)=\int_{H^+}(h(\nu_K(x)))^{n}H_K(x)d\mathcal{H}^{n-1}(x).
\end{equation}

Set 
\begin{equation*}
g(x)=
\begin{cases}
1/h(\nu_K(x)),& \text{if } x\in H^+,\\
1,& \text{otherwise}. 
\end{cases}
\end{equation*}
Note that $0<g<\infty$ and $g$ is $\mathcal{H}^{n-1}$ measurable. By \eqref{eq_local 10}, $g^p(h_K\circ\nu_K)^{1-p}$ is $\mathcal{H}^{n-1}$ integrable on $\partial K$. Hence $g\in T_1$. By \eqref{eq_local 10}, \eqref{eq_local 17}, the left side is less than or equal to the right side in \eqref{102}.
\end{proof}

The proof given above reveals that Definition \eqref{eq_new_9} is ``polar'' to \eqref{eq_new_6} and they are linked by the Gauss map and the inverse Gauss map.

The $L_p$ affine isoperimetric inequality, which is the extension of the affine isoperimetric inequality \eqref{star} of affine differential geometry, was first established by Lutwak in \cite{lutwak1996}. Thanks to \eqref{eq_new_15}, we are ready to prove the $L_p$ affine isoperimetric inequality using the new representation \eqref{eq_new_9}. 

\begin{theorem}
{\label{prop_new_2}
Let $p>0$ be a real number. Suppose $K\in \mathcal{K}_0^n$ and has the origin as its centroid. We have,
\begin{equation}
\label{eq_new_16}
\Omega_p(K)\leq n \omega_n^{\frac{2p}{n+p}}V(K)^{\frac{n-p}{n+p}},
\end{equation}
with equality if and only if $K$ is an ellipsoid.
}
\end{theorem}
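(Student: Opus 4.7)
The plan is to apply the $L_p$ mixed volume-type inequality \eqref{eq_new_15} with the particular choice $L = K$ and then combine the resulting bound with the Blaschke--Santal\'o inequality. With $L = K$, the $L_p$ mixed volume factor simplifies by the identity $V_p(K,K) = V(K)$, which is immediate from the definition of $V_p$ (since $h_K^p \, h_K^{1-p} = h_K$) and the standard support function representation $nV(K) = \int_{S^{n-1}} h_K(u)\, dS_K(u)$.

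For the $W$ factor in \eqref{eq_new_15}, I would rewrite
$$n W(K, h_K \circ \nu_K) = \int_{\partial K} h_K^{-n}(\nu_K(x))\, H_K(x)\, d\mathcal{H}^{n-1}(x)$$
and apply the change of variable identity \eqref{eq_local 8}, which was established during the proof of Theorem \ref{thm_new_3} via Federer's coarea formula and the Lipschitz property of $\nu_r$. With the test function $g = h_K \circ \nu_K$, and using the relation $\nu_K \circ \tau_K = \mathrm{id}$ on $F^+$ (modulo null sets), the identity \eqref{eq_local 8} collapses to
$$n W(K, h_K \circ \nu_K) = \int_{F^+} h_K^{-n}(u)\, d\mathcal{H}^{n-1}(u) \leq \int_{S^{n-1}} h_K^{-n}(u)\, d\mathcal{H}^{n-1}(u) = n V(K^*),$$
where $K^* = \{y \in \R{n} : \langle x, y\rangle \leq 1 \text{ for all } x \in K\}$ is the polar body of $K$ and the last equality is the standard polar volume formula. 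Substituting this bound and $V_p(K,K) = V(K)$ into \eqref{eq_new_15} yields
$$\Omega_p(K) \leq n\, V(K^*)^{p/(n+p)}\, V(K)^{n/(n+p)}.$$

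The final step is to invoke the Blaschke--Santal\'o inequality: since $K$ has centroid at the origin, $V(K)\, V(K^*) \leq \omega_n^2$, with equality if and only if $K$ is an ellipsoid centered at the origin. Substituting $V(K^*) \leq \omega_n^2/V(K)$ produces exactly \eqref{eq_new_16}. For the equality case, a direct computation confirms that both the intermediate inequality $\int_{F^+} h_K^{-n} \leq \int_{S^{n-1}} h_K^{-n}$ and the Blaschke--Santal\'o bound are simultaneously attained on origin-centered ellipsoids, so $K$ being an ellipsoid yields equality in \eqref{eq_new_16}; conversely, equality in \eqref{eq_new_16} forces equality in the Blaschke--Santal\'o step, hence $K$ must be an ellipsoid.

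The main obstacle I anticipate is not conceptual but technical: verifying that $g = h_K \circ \nu_K$ is a legitimate test function in $T_1$ (positivity follows from $o$ being interior to $K$; $\mathcal{H}^{n-1}$-measurability from continuity of $\nu_K$ on $\reg K$ combined with $\mathcal{H}^{n-1}(\partial K \setminus \reg K) = 0$; finiteness of $\int_{\partial K} h_K(\nu_K(x))\, d\mathcal{H}^{n-1}(x)$ from boundedness of $h_K$ on $S^{n-1}$ and finiteness of the surface area), and then invoking the change of variable identity \eqref{eq_local 8} with this specific, merely measurable choice of $g$. Everything else is a direct assembly of tools already available in the paper.
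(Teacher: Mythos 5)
Your proposal is correct and follows essentially the same route as the paper: the bound \eqref{eq_new_15}, the estimate $W(K,h_K\circ\nu_K)\leq V(K^*)$ via the change-of-variable identity from the proof of Theorem \ref{thm_new_3}, and then the Blaschke--Santal\'o inequality (the paper keeps a general $L$ before setting $L=K$, but that is cosmetic). One small caution on the ``ellipsoids attain equality'' direction: tightness of the intermediate inequalities for your particular test function is not by itself enough, since $\Omega_p(K)$ is an infimum; you must also check that this infimum is attained there (e.g.\ by computing $\Omega_p$ of a ball via the integral representation in Theorem \ref{thm_new_2} and invoking affine invariance), which is exactly the ``direct calculation'' the paper also leaves implicit.
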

\begin{proof}
Recall from \eqref{eq_new_15} that, for each convex body $L$ that has the origin as its centroid,
\begin{equation}
\label{eq_new_17}
\frac{1}{n}\Omega_p(K)\leq W(K,h_L\circ\nu_K)^{\frac{p}{n+p}}V_p(K,L)^{\frac{n}{n+p}}.
\end{equation}
Based on the change of variable formula established in the proof of Theorem \ref{thm_new_3},
\begin{equation}
\label{eq_new_18}
W(K,h_L\circ\nu_K)\leq V(L^*),
\end{equation}
where $L^*$ is the polar body of $L$. Combining \eqref{eq_new_17} and \eqref{eq_new_18}, we get,
\begin{equation}
\label{eq_new_19}
\frac{1}{n}\Omega_p(K)\leq V(L^*)^{\frac{p}{n+p}}V_p(K,L)^{\frac{n}{n+p}}.
\end{equation}

It was observed in \cite{lutwak1991,lutwak1996} that \eqref{eq_new_19} is stronger  than \eqref{eq_new_16}. Indeed, just by replacing $L$ in \eqref{eq_new_19} by $K$ and using the Blaschke-Santal\'{o} inequality, we get \eqref{eq_new_16}.

Equality holds only if the equality holds for the Blaschke-Santal\'{o} inequality, i.e., $K$ is an ellipsoid. That the equality does hold for ellipsoids follows from a direct calculation. 
\end{proof}

\section{A Further Application of Section \ref{sec_lip}}
Hug in~\cite{daniel1996} proved the equivalence between Definitions \eqref{eq_new_7} and \eqref{eq_new_8} by applying Federer's coarea formula to the Lipschitz map $\nu_r$, the restriction of $\nu_K$ to $(\partial K)_r$. Here we provide a dual proof of the same result by applying Federer's coarea formula to the Lipschitz map $\tau_i$, the restriction of $\tau_K$ to $A_i$, which is made possible by the discussion in Section \ref{sec_lip}.

\begin{theorem}
Let $p>0$ be a real number. Suppose $K\in \mathcal{K}_0^n$. We have
\begin{equation*}
\int_{S^{n-1}}\left(\frac{F_K(u)}{h_K^{p-1}(u)}\right)^{\frac{n}{n+p}}d\mathcal{H}^{n-1}(u)=\int_{\partial K}\left(\frac{H_K(x)}{(h_K(\nu_K(x)))^{(p-1)\frac{n}{p}}}\right)^{\frac{p}{n+p}}d\mathcal{H}^{n-1}(x).
\end{equation*}
\end{theorem}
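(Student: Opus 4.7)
The plan is to apply Federer's coarea formula (Theorem \ref{coarea formula}) to the Lipschitz restrictions $\tau_i=\tau_K|_{A_i}$ from Section \ref{sec_lip}, dualizing Hug's proof of the equivalence of \eqref{eq_new_7} and \eqref{eq_new_8}, which applied the coarea formula to $\nu_r=\nu_K|_{(\partial K)_r}$ instead. Since $F_K=0$ outside $F^+$ and $H_K=0$ outside $H^+$ (where defined), and the complements are $\mathcal{H}^{n-1}$-null by \eqref{hkdifferentiable} and \eqref{sharp}, the identity reduces to
$$\int_{F^+}\left(\frac{F_K(u)}{h_K^{p-1}(u)}\right)^{\frac{n}{n+p}}d\mathcal{H}^{n-1}(u)=\int_{H^+}\left(\frac{H_K(x)}{(h_K(\nu_K(x)))^{(p-1)n/p}}\right)^{\frac{p}{n+p}}d\mathcal{H}^{n-1}(x).$$

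Fix $i\in\mathbb{N}^+$. Lemmas \ref{lemma_closedness}, \ref{Lemma_tau lip}, and \ref{Lemma_Jacobian} guarantee that $A_i$ is closed, $\tau_i$ is Lipschitz, and $(\mathcal{H}^{n-1}\mres A_i,n-1)\,\ap\,J\tau_i(u)=F_K(u)$ for $\mathcal{H}^{n-1}$-a.e.\ $u\in A_i$. I apply Theorem \ref{coarea formula} with test function
$$g(u)=\chara{F^+}(u)\,F_K(u)^{-1}\left(\frac{F_K(u)}{h_K^{p-1}(u)}\right)^{\frac{n}{n+p}}.$$
Multiplying by the Jacobian $F_K(u)$ produces exactly the left-hand integrand, while on the $\partial K$ side the fiber $\tau_i^{-1}(x)$ is at most a singleton (since $\tau_K$ is injective on $\regn K$). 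The Lipschitz property of $\tau_i$ together with \eqref{hkdifferentiable}, \eqref{sharp}, Lemma \ref{lemma_H>0}, and Lemma \ref{Lemma_reciporical of H and F} ensures that for $\mathcal{H}^{n-1}$-a.e.\ $x\in\tau_i(A_i)\cap H^+$ the unique preimage is $\nu_K(x)\in F^+$ with $F_K(\nu_K(x))=H_K(x)^{-1}$. A short exponent manipulation then gives
$$g(\nu_K(x))=H_K(x)^{\frac{p}{n+p}}\,h_K(\nu_K(x))^{-\frac{(p-1)n}{n+p}}=\left(\frac{H_K(x)}{(h_K(\nu_K(x)))^{(p-1)n/p}}\right)^{\frac{p}{n+p}},$$
so Federer's formula yields, for each $i$,
$$\int_{A_i\cap F^+}\left(\frac{F_K}{h_K^{p-1}}\right)^{\frac{n}{n+p}}d\mathcal{H}^{n-1}=\int_{\tau_i(A_i)\cap H^+}\left(\frac{H_K(x)}{(h_K(\nu_K(x)))^{(p-1)n/p}}\right)^{\frac{p}{n+p}}d\mathcal{H}^{n-1}(x).$$

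The proof concludes by letting $i\to\infty$. A direct check shows $B[x-iu,i]\subset B[x-(i+1)u,i+1]$, so $A_i\subset A_{i+1}$ and consequently $\tau_i(A_i)\subset \tau_{i+1}(A_{i+1})$. Corollary \ref{corollary_a.e.} yields $\bigcup_i(A_i\cap F^+)=F^+$ up to a null set, while Lemma \ref{lemma_H>0} together with the normality of points in $H^+$ yields $\bigcup_i(\tau_i(A_i)\cap H^+)=H^+$ up to a null set. Monotone convergence closes the argument. The principal obstacle I anticipate is the careful tracking of exceptional $\mathcal{H}^{n-1}$-null sets on both sides --- simultaneously verifying $\tau_i^{-1}(x)=\{\nu_K(x)\}$ and $F_K(\nu_K(x))=1/H_K(x)$ for $\mathcal{H}^{n-1}$-a.e.\ $x\in\tau_i(A_i)\cap H^+$, and ensuring measurability of the transplanted integrand on $\partial K$. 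Fortunately, the analogous bookkeeping in the proof of Theorem \ref{thm_new_3} (culminating in \eqref{eq_local 25}) supplies a faithful template, and the exponent algebra turning $(F_K/h_K^{p-1})^{n/(n+p)}F_K^{-1}$ into $(H_K/h_K^{(p-1)n/p})^{p/(n+p)}$ via $H_K F_K=1$ is then routine.
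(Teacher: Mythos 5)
Your proposal is correct and follows essentially the same route as the paper's own proof: restrict to the closed sets $A_i$, apply Federer's coarea formula to the Lipschitz maps $\tau_i$ with approximate Jacobian $F_K$ (Lemmas \ref{lemma_closedness}, \ref{Lemma_tau lip}, \ref{Lemma_Jacobian}), identify the fiber over $\mathcal{H}^{n-1}$-a.e.\ $x$ with $\{\nu_K(x)\}$, convert the integrand via $H_KF_K=1$ (Lemma \ref{Lemma_reciporical of H and F}), and pass to the limit by monotone convergence using Corollary \ref{corollary_a.e.} and Lemma \ref{lemma_H>0}. One minor correction: $\tau_K$ is not injective on $\regn K$ in general (all outer normals at a vertex of a polytope are regular normal vectors with the same image), but this parenthetical is harmless, since the statement you actually use --- that for a.e.\ $x$ the fiber $\tau_i^{-1}(x)$ is contained in $\{\nu_K(x)\}$ --- follows, exactly as you then argue and as the paper does in the chain ending at \eqref{eq_local 25}, from \eqref{sharp}: a.e.\ boundary point is normal, hence regular, hence has a unique outer normal.
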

\ifdoublespace
\vspace{-0.2in}
\fi
\begin{proof}
For each $i\in \mathbb{N}^+$, observe that $A_i$ and $\partial K$ are $(\mathcal{H}^{n-1}, n-1)$ rectifiable and $\mathcal{H}^{n-1}$ measurable (since $A_i$ is closed by Lemma \ref{lemma_closedness}). 

By Lemmas \ref{Lemma_tau lip}, \ref{Lemma_Jacobian}, Federer's coarea formula, \eqref{sharp}, \eqref{eq_local 16}, Lemma \ref{Lemma_reciporical of H and F}, and \eqref{eq_local 16} once again, we have
\begin{equation}
\label{106}
\begin{aligned}
\int_{A_i}\left(\frac{F_K(u)}{h_K^{p-1}(u)}\right)^{\frac{n}{n+p}}d\mathcal{H}^{n-1}(u)=&\int_{A_i\cap F^+}\left(\frac{F_K(u)}{h_K^{p-1}(u)}\right)^{\frac{n}{n+p}}d\mathcal{H}^{n-1}(u)\\
=&\int_{A_i}\chara{F^+}(u)\left(\frac{F_K^{-\frac{p}{n}}(u)}{h_K^{p-1}(u)}\right)^{\frac{n}{n+p}}F_K(u)d\mathcal{H}^{n-1}(u)\\
=&\int_{\partial K}\left(\int_{\tau_{i}^{-1}(x)}\chara{F^+}(u)\left(\frac{F_K^{-\frac{p}{n}}(u)}{h_K^{p-1}(u)}\right)^{\frac{n}{n+p}}d\mathcal{H}^0(u)\right)d\mathcal{H}^{n-1}(x)\\
=&\int_{\tau_i(A_i)\cap \nor K}\chara{F^+}(\nu_K(x))\left(\frac{(F_K(\nu_K(x)))^{-\frac{p}{n}}}{(h_K(\nu_K(x)))^{p-1}}\right)^{\frac{n}{n+p}}d\mathcal{H}^{n-1}(x)\\
=&\int_{\tau_i(A_i\cap D^2(h_K))\cap \nor K}\chara{F^+}(\nu_K(x))\left(\frac{(F_K(\nu_K(x)))^{-\frac{p}{n}}}{(h_K(\nu_K(x)))^{p-1}}\right)^{\frac{n}{n+p}}d\mathcal{H}^{n-1}(x)\\
=&\int_{\tau_i(A_i\cap D^2(h_K))\cap \nor K}\chara{H^+}(x)\left(\frac{H_K(x)}{(h_K(\nu_K(x)))^{(p-1)\frac{n}{p}}}\right)^{\frac{p}{n+p}}d\mathcal{H}^{n-1}(x)\\
=&\int_{\tau_i(A_i)\cap \nor K}\left(\frac{H_K(x)}{(h_K(\nu_K(x)))^{(p-1)\frac{n}{p}}}\right)^{\frac{p}{n+p}}d\mathcal{H}^{n-1}(x).\end{aligned}
\end{equation}

Now, let $i\rightarrow \infty$ in \eqref{106}. It follows from the monotone convergence theorem, Corollary \ref{corollary_a.e.}, and Lemma \ref{lemma_H>0} that
\begin{equation*}
\begin{aligned}
\int_{S^{n-1}}\left(\frac{F_K(u)}{h_K^{p-1}(u)}\right)^{\frac{n}{n+p}}d\mathcal{H}^{n-1}(u)&=\int_{H^+}\left(\frac{H_K(x)}{(h_K(\nu_K(x)))^{(p-1)\frac{n}{p}}}\right)^{\frac{p}{n+p}}d\mathcal{H}^{n-1}(x)\\
&=\int_{\partial K}\left(\frac{H_K(x)}{(h_K(\nu_K(x)))^{(p-1)\frac{n}{p}}}\right)^{\frac{p}{n+p}}d\mathcal{H}^{n-1}(x).
\end{aligned}
\end{equation*}

\end{proof}
\thanks{This material is based upon work supported in part by the National Science Foundation under Grant DMS-1312181.}

\end{document}